\numberwithin{equation}{section}
\newtheorem{theorem}{Theorem}[section]
\newtheorem{lemma}[theorem]{Lemma}
\newtheorem{proposition}[theorem]{Proposition}
\newtheorem{corollary}[theorem]{Corollary}
\newtheorem{assumption}[theorem]{Assumption}
\theoremstyle{definition}
\newtheorem{definition}[theorem]{Definition}
\newtheorem{remark}[theorem]{Remark}
\def\E{{\mathbb E}}
\def\R{{\mathbb R}}
\def\N{{\mathbb N}}
\def\P{{\mathcal P}}
\def\L{{\mathcal L}}
\def\F{{\mathcal F}}
\def\tr{{\mathrm{tr}}}
\newcommand{\sP}{\mathcal{P}}
\newcommand{\bx}{\mathbf{x}}
\newcommand{\bX}{\mathbf{X}}
\newcommand{\bY}{\mathbf{Y}}
\newcommand{\bZ}{\mathbf{Z}}
\newcommand{\by}{\mathbf{y}}
\newcommand{\sL}{\mathcal{L}}
\newcommand{\bP}{\mathbb{P}}
\newcommand{\spt}{\mathcal{P}_2(\R^n)}
\newcommand{\sF}{\mathcal{F}}
\newcommand{\ov}{\overline}
\newcommand{\sA}{\mathcal{A}}
\newcommand{\bbF}{\mathbb{F}}
\newcommand{\wt}{\widetilde}
\DeclareMathOperator*{\argmin}{arg\,min}
\newcommand\cA{\mathcal A}
\newcommand\cC{\mathcal C}
\newcommand\cF{\mathcal F}
\newcommand\cL{\mathcal L}
\newcommand\cP{\mathcal P}
\newcommand\cS{\mathcal S}
\newcommand\cU{\mathcal U}
\newcommand\cW{\mathcal W}
\def \E{\mathbb{E}}
\def \F{\mathbb{F}}
\def \H{\mathbb{H}}
\def \L{\mathbb{L}}
\def \N{\mathbb{N}}
\def \P{\mathbb{P}}
\def \R{\mathbb{R}}
\def \e{\mathrm{e}}
\def \stwo{\mathcal{S}^2}
\def \ltwo{\mathcal{L}^2}
\newcommand{\bz}{\bm z}
\newcommand\blfootnote[1]{%
  \begingroup
  \renewcommand\thefootnote{}\footnote{#1}%
  \addtocounter{footnote}{-1}%
  \endgroup
}
\thanks{J.J. is supported by the NSF under Grant No. DGE1610403.
L.T is partially supported by the NSF under Grants DMS-2005832 and CAREER DMS-2143861. Any opinions, findings and conclusions or recommendations expressed in this material are those of the authors and do not necessarily reflect the views of the NSF}
\title[Convergence for mean field games with controlled volatility]{Quantitative convergence for displacement monotone mean field games with controlled volatility}
\author{Joe Jackson and Ludovic Tangpi} 
\address{The University of Texas at Austin}
\email{jjackso1@utexas.edu}
\address{Princeton University}
\email{ludovic.tangpi@princeton.edu}
\begin{document}

\blfootnote{The authors wish to thank Alp\'ar R. M\'esz\'aros for a helpful exchange which led to several improvements of the paper.}

\begin{abstract}
We study the convergence problem for mean field games with common noise and controlled volatility. We adopt the strategy recently put forth by Lauri\`ere and the second author, using the maximum principle to recast the convergence problem as a question of ``forward-backward propagation of chaos", i.e (conditional) propagation of chaos for systems of particles evolving forward and backward in time. Our main results show that displacement monotonicity can be used to obtain this propagation of chaos, which leads to quantitative convergence results for open-loop Nash equilibria for a class of mean field games. Our results seem to be the first (quantitative or qualitative) which apply to games in which the common noise is controlled. The proofs are relatively simple, and rely on a well-known technique for proving well-posedness of FBSDEs which is combined with displacement monotonicity in a novel way. To demonstrate the flexibility of the approach, we also use the same arguments to obtain convergence results for a class of infinite horizon discounted mean field games.
\end{abstract}

\maketitle

\tableofcontents

\section{Introduction}

Consider an $N$-player stochastic differential game in which player $i \in\{ 1,\dots,N\}$ chooses an $\R^k$-valued control $\alpha^i$ which impacts the $\R^n$-valued state process $X^i$ via the dynamics 
\begin{align*}
dX_t^i = b(X_t^i, \alpha_t^i, m_{\bX_t}^N) dt + \sigma(X_t^i, \alpha_t^i, m_{\bX_t}^N) dW_t^i + \sigma^0(X_t^i, \alpha_t^i, m_{\bX_t}^N) dW_t^0, \quad X_0^i = \xi^i, 
\end{align*}
with $(\xi^i)_{i = 1,\dots,N}$ i.i.d. with common law $m_0$. 
Player $i$ chooses the (open-loop) control $\alpha^i$ in order to minimize the cost functional 
\begin{align*}
J^{N,i}(\bm \alpha) = J^{N,i}(\alpha^1,\dots,\alpha^N) = \E\bigg[ \int_0^T L(X_t^i,\alpha_t^i,m_{\bX_t}^N) dt + G(X_T^i,m_{\bX_T}^N) \bigg].
\end{align*}
Here we write $\bX^N = (X^1,\dots,X^N)$ and throughout the paper we use the notation
$$m^N_{\bx}:= \frac1N\sum_{i=1}^N\delta_{x^i}$$ for any $\bx = (x^1,\dots, x^N) \in (\R^n)^N$.
Here $W^0,\dots,W^N$ are independent Brownian motions, and the data consists of sufficiently smooth functions $b, \sigma, \sigma^0, L,  G$ (precise assumptions will be made later). Our main results are concerned with the convergence as $N \to \infty$ of sequences of $N$-player Nash equlibria.
It is expected that for $N$ large, this $N$-player game is well approximated by a certain limiting model, known as a mean field game.
We formulate this limiting model precisely in subsection \ref{subsec:gamesetup}, but a rough description is as follows: a measure-valued process $m = (m_t)_{0 \leq t \leq T}$ representing the (conditional) distribution of the population's state is fixed and the representative agent chooses an $\R^k$--valued control $\alpha$
in order to minimize
\begin{align}
  J_m(\alpha) = \E\bigg[  \int_0^T  L(X_t, \alpha_t, m_t) dt + G(X_T,m_T) \bigg]
\end{align}
subject to the controlled state dynamics
\begin{align}  \label{mfdynamicsintro}
dX_t =  b(X_t, \alpha_t, m_t) dt +\sigma(X_t,\alpha_t, m_t)d W_t + \sigma^0(X_t, \alpha_t, m_t)d W_t^0, \quad X_0 = \xi \sim m_0. 
\end{align}
A mean field Nash equilibrium or simply a mean field equilibrium (MFE) is a pair $(m, \alpha)$ such that $m$ is a measure flow, $\alpha$ is a minimizer of $J_{m}$ with corresponding state process $X$, and it holds that
\begin{align*}
 m_t = \sL(X_t | \sF_t^0), 
\end{align*}
where $\bbF^0 = (\sF_t^0)_{0 \leq t \leq T}$ is the filtration of $W^0$, and we set $\sF^0 = \sF_T$.

The well-posedness of this limiting model and its connection to the $N$-player games above is the subject of mean field game (MFG) theory, which was initiated by \citet{lasry2006jeux,lasry2006jeux2,lasry2007mean} and \citet{huang2006large,huang2007large}. Owing to the fact that MFGs are usually more tractable (especially in numerical simulations), the theory and applications of MFGs has grown enormously in the past decade.
We refer to \citet{cardelbook1,cardelbook2} and \citet{bensoussan2013mean} as well as the references therein for an exposition of the theory, and to \citet{CarmonaSurvery20} for a survey of (financial economics) applications.
This paper's main focus is the MFG convergence problem, i.e. the challenge of rigorously justifying and quantifying the convergence of $N$-player Nash equilibria towards the corresponding MFE. This is an issue at the heart of the MFG theory.
Along the way, we will also derive new existence results for MFGs as well as uniqueness results for both MFGs and finite player games.

\subsection{Our results}
We now present a synopsis of the main results of this work.
Suppose we are given for each $N \in \N$ a Nash equilibrium $\bm \alpha^N = (\alpha^{N,1},\dots,\alpha^{N,N})$ for the $N$-player game. Let $\bX^N = (X^{N,1},\dots,X^{N,N})$ be the corresponding equilibrium state process.
Our main result, Theorem \ref{thm:pocvolcontrol}, shows that under certain regularity and monotonicity conditions on the data, we have 
\begin{align} \label{xestintro}
\E\Big[ \sup_{0 \leq t \leq T} |X_t^{N,i} - X_t^i|^2 \Big] \leq \frac{C}{N} 
\end{align}
where $(X^i)_{i = 1,\dots,N}$ is a sequence of $\sF^0$-conditionally independent ``copies" of the mean field equilibrium state process $X$ constructed through an explicit coupling procedure, see sub-section \ref{subsec:probsetup} and in particular Definition \ref{def:condindcopies}. Because $(X^i)_{i = 1,\dots,N}$ are $\sF^0$-conditionally independent, we can interpret \eqref{xestintro} as a \textit{conditional} version of propagation of chaos. Indeed, if we abuse notation slightly and write $m_t$ also for the common $\sF_t^0$-conditional law of the random variables $(X^i_t)_{i = 1,\dots,N}$, then for each $k \in \N$, \eqref{xestintro} implies
\begin{align} \label{wassestintro}
\E\bigg[\cW_2^2\Big( \sL(X_t^{N,1},\dots,X_t^{N,k}), m_t^{\otimes k}\Big) \bigg] \leq \frac{Ck}{N}.
\end{align}
for some constant $C$ independent of $k$ and $N$.
Thus for fixed $k$, \eqref{xestintro} implies that for $N$ large and conditionally on $\sF_t^0$, $X_t^{N,1},\dots,X_t^{N,k}$ are approximately i.i.d. and their common conditional law is determined by the mean field equilibrium $m$. Under some additional regularity assumptions, we also show that
\begin{align} \label{alphaestintro}
\E\bigg[ \int_0^T |\alpha_t^{N,i} - \alpha_t^i|^2 dt\bigg] \leq \frac{C}{N}, 
\end{align}
where $(\alpha^i)_{i = 1,\dots,N}$ are $\sF^0$-conditionally independent copies of the optimizer $\alpha$ from the mean field game (see Definition \ref{def:condindcopies}). 

The estimates \eqref{xestintro} and \eqref{alphaestintro} are established by exploiting various notions of displacement monotonicity. We remind the reader that for $c \in \R$, a function $U = U(x,m)$ is $c$-displacement monotone
if the inequality 
\begin{align} \label{def:dispsemimonotone}
\E\bigg[ \Big(D_x U(\xi^1,\sL(\xi^1)) - D_x U(\xi^2, \sL(\xi^2)) \Big) \cdot \big(\xi^1 - \xi^2 \big) \bigg] \geq c \E\big[|\xi^1 - \xi^2|^2\big] 
\end{align}
holds for any sufficiently integrable random variables $\xi^1,\xi^2$ with respective laws $\sL(\xi^1), \sL(\xi^2)$.
Moreover $U$ is called displacement semi-monotone if $U$ is $c$-displacement monotone for some $c \in \R$, displacement monotone if $U$ is $0$-semi-monotone, and strictly displacement monotone if $U$ is $c$-displacement-monotone for some $c > 0$. We now discuss in more detail how we use displacement monotonicity and its generalizations to establish the estimates \eqref{xestintro} and \eqref{alphaestintro} in three distinct regimes.
\newline
\paragraph{\textbf{The case of constant volatility}.} We first establish (in Theorem \ref{thm:pocdriftcontrol}) a propagation of chaos result in the special case of linear drift control and separated running cost, i.e. the case 
\begin{align*}
b(x,a,m) = a, \quad \sigma(x,a,m) = \Sigma, \quad  \sigma^0(x,a,m) = \Sigma^0 \quad L(x,a,m) = L_0(x,a) + F(x,m). 
\end{align*}
There are two reasons to treat this case separately. First, the proof in this case is especially simple. Second, and more importantly, in this case the monotonicity condition simplifies and we are able to get a sharper result which reveals a tradeoff between convexity of $L_0$ in $(x,a)$, displacement (semi-)monotonicity of $F$ and $G$, and the size of the time horizon $T$. In particular, the main monotonicity condition is that $L_0$ is $C_L$-convex in $(x,a)$ for some $C_L > 0$, $F$ is $C_F$-displacement semi-monotone, $G$ is $C_G$-displacement semi-monotone, and 
\begin{align} \label{constantvolstruct}
C_L + T (C_G \wedge 0) + \frac{T^2 (C_F \wedge 0)}{2} > 0. 
\end{align}
Here $a \wedge b = \min \{a,b\}$. Note that $C_G$ and $C_F$ are allowed to be negative, so that the monotonicity condition does not actually require displacement monotonicity of $F$ or $G$ provided that $C_L$ is large enough or $T$ is small enough. Thus our proof strategy provides new insights even in this reasonably well-understood special case.
\newline
\paragraph{\textbf{The case of affine dynamics}.} We also treat separately the case of ``affine dynamics", by which we mean the case that the dynamics are affine in $(x,a)$, i.e. 
\begin{align*}
b(x,a,m) = B_0 + B_1 a + B_2 x, \quad \sigma(x,a,m) = \Sigma_0 + \Sigma_1 a + \Sigma_2 x, \quad \sigma^0(x,a,m) = \Sigma^0_0 + \Sigma^0_1 a + \Sigma_2^0 x
\end{align*}
but $L = L(x,m,a)$ is not necessarily separated. The main structural condition in this case is a strict version of condition
(H8) in \cite{Mes-Mou21}. In particular, we require
\begin{align} \label{affinestructintro}
    &\E\Big[\Big(D_x L(X,\alpha,\sL(X)) - D_x L(\ov{X}, \ov{\alpha}, \sL(\ov{X}) \Big) \cdot (X - \ov{X}) \nonumber \\ &\qquad + \Big(D_a L(X,\alpha,\sL(X)) - D_a L(\ov{X}, \ov{\alpha}, \sL(\ov{X}) \Big) \cdot (\alpha - \ov{\alpha}) \Big] \geq C_L \E[|\alpha - \ov{\alpha}|^2]
\end{align}
for all random variables $X, \ov{X}, \alpha, \ov{\alpha}$ of appropriate dimension and some constant $C_L > 0$, and we also assume that $G$ is displacement monotone. We note that our results in the affine case do not cover our results in the constant volatility case, because we do not see the same tradeoff between monotonicity, convexity and the size of the time horizon \eqref{constantvolstruct}. In particular, our main result in the affine case, Theorem \ref{thm:pocaffine}, implies convergence in the constant volatility case only when $C_F, C_G \geq 0$. 
\newline
\paragraph{\textbf{The general case}.}
In the general case (i.e. without assuming that the state dynamics have affine coefficients or the cost is separated) we are still able to obtain the estimates \eqref{xestintro} and \eqref{alphaestintro} under a new structural condition on the Hamiltonian of the game. This
structural condition is stated precisely in Assumption \ref{assump:controlledvolstruct}, but roughly speaking it states that $G$ is \textit{strictly} displacement monotone, and the Hamiltonian $H$ (defined in \eqref{def:hamiltonian}) satisfies the monotonicity condition 
\begin{align} \label{hammonotoneintro}
  \E\bigg[ &- \big(D_x H (X,Y,Z,Z^0,\sL(X)) - D_x H(\ov{X},\ov{Y},\ov{Z},\ov{Z}^0, \sL(\ov{X})) \big) \cdot (X - \ov{X})  \nonumber 
  \\  &+ \big(D_y H(X,Y,Z,Z^0,\sL(X)) - D_y H(\ov{X},\ov{Y},\ov{Z},\ov{Z}^0, \sL(\ov{X})) \big) \cdot (Y - \ov{Y}) \nonumber \\ &+ \big(D_z H (X,Y,Z,Z^0,\sL(X)) - D_z H(\ov{X},\ov{Y},\ov{Z},\ov{Z}^0, \sL(\ov{X})) \big) \cdot (Z - \ov{Z}) \nonumber 
  \\ & + \big(D_{z^0} H(X,Y,Z,Z^0,\sL(X)) - D_{z^0} H(\ov{X},\ov{Y},\ov{Z},\ov{Z}^0, \sL(\ov{X})) \big) \cdot (Z^0 - \ov{Z}^0) \bigg] \nonumber \\
& \quad \quad \leq - C_H \E\big[|X - \ov{X}|^2\big]
\end{align}
for some $C_H > 0$ and for any random variables $X, \ov{X}, Y, \ov{Y}, Z, \ov{Z}, Z^0, \ov{Z}^0$ of appropriate dimensions. 

The condition \eqref{hammonotoneintro} may appear technical, but it can be viewed as a natural generalization of several conditions which have already appeared in the literature. First, if we are in the special case of linear drift control: 
\begin{align*}
    b(x,a,m) = a, \quad \sigma(x,a,m) = \sigma^0(x,a,m) = I_{n \times n}, 
\end{align*}
then our ``full" Hamiltonian $H$ can be written as 
\begin{align*}
    H(x,y,z,z^0,m) = H_0(x,y,m) + \tr(z + z^0), 
\end{align*}
with $H_0$ the ``reduced" Hamiltonian given by 
\begin{align*}
    H_0(x,y,m) = \inf_{a} \Big(L(x,a,m) + y \cdot a \Big).
\end{align*}
In this case, it is easy to see that \eqref{hammonotoneintro} is (a strict version of) a generalization of the displacement monotonicity condition (2.8) of \cite{Mes-Mou21}, which is in turn a generalization of the displacement monotonicity condition proposed in \cite[Definition 3.4]{Gang-Mes-Mou-Zhang22}. Thus our condition \eqref{hammonotoneintro} can also be viewed as a generalization of the main structural condition of \cite{Gang-Mes-Mou-Zhang22} to the setting of controlled volatility. We note that \cite{Gang-Mes-Mou-Zhang22} and \cite{Mes-Mou21} are concerned with the wellposedness of the master equation and the MFG system, respectively, and in particular neither addresses the convergence problem.

The condition \eqref{hammonotoneintro} could also be viewed as a generalization of a structural condition  commonly appearing in the literature on finite-dimensional forward backward stochastic differential equations (FBSDEs). In particular, if $b, \sigma$, and $\sigma^0$ are independent of $m$, and $L$ has a separated form 
\begin{align*}
    L(x,a,m) = L_0(x,a) + F(x,m), 
\end{align*}
then we can write 
\begin{align*}
    H(x,y,z,z^0,m) = H_0(x,y,z) + F(x,m), 
\end{align*}
where 
\begin{align*}
    H_0(x,y,z,z^0) = \inf_{a} \Big(L_0(x,a) + b(x,a) \cdot y + \sigma(x,a) \cdot z + \sigma^0(x,a) \cdot z^0 \Big).
\end{align*}
In this case, it is easy to check that \eqref{hammonotoneintro} is satisfied if $F$ is (strictly) displacement monotone and $H_0$ satisfies 
\begin{align*}
    &-\Big(D_xH_0(x,y,z,z^0) - D_xH_0(\ov{x},\ov{y},\ov{z},\ov{z}^0) \Big) \cdot (x - \ov{x}) 
    + \Big(D_yH_0(x,y,z,z^0) - D_yH_0(\ov{x},\ov{y},\ov{z},\ov{z}^0) \Big) \cdot (y - \ov{y}) \\
    &+ 
    \Big(D_zH_0(x,y,z,z^0) - D_zH_0(\ov{x},\ov{y},\ov{z},\ov{z}^0) \Big) \cdot (z - \ov{z})
    + 
    \Big(D_{z^0}H_0(x,y,z,z^0) - D_{z^0}H_0(\ov{x},\ov{y},\ov{z},\ov{z}^0) \Big) \cdot (z - \ov{z}^0) \leq 0.
\end{align*}
This is exactly the monotonicity condition typically used to study stochastic control problems with controlled volatility through the maximum principle and FBSDE methods, see e.g. Section 3 of \cite{pengwumonotone} for more details.

While our main results are concerned with the convergence problem, we also establish the existence and uniqueness of equilibria for the mean field game under the same assumptions. This is done by characterizing them by the maximum principle, and solving the resulting McKean-Vlasov FBSDE via the method of continuation. These arguments are presented in the Appendix. Finally, to demonstrate the flexibility of our proof strategy, we show in Section \ref{subsec:infinite} how to adapt our methods to study the convergence problem in the setting of infinite horizon (discounted) mean field games.

\subsection{Comparison with the literature} 
\label{subec:comparison}
The first general results on the convergence of large population stochastic differential games to mean field games are due to \citet{lacker2016general} and \citet{fischer2017connection} who showed that sequences of laws of Nash equilibria in the $N$--player games admit subsequential weak limits that are (weak) solutions of the mean field game.
This statement was proved under modest conditions on the coefficients and information structure of the games, allowing for instance common noise, non--separated running cost and non-constant, (but uncontrolled) volatility.
An essential step to achieve these results is to extend the set of admissible controls to include \emph{relaxed} controls.
This approach has been generalized to considering games with closed loop controls (see \citet{Lac-LeFlem2023,lacker2020convergence}, \citet{Djete21} and \citet{Iseri-Zhang21})
and gives a fairly complete \emph{qualitative} understanding of the asymptotic behavior of symmetric stochastic differential games as the number of players goes to infinity.

Extending the qualitative, asymptotic statements of \cite{Djete21,fischer2017connection,lacker2016general,lacker2020convergence,Lac-LeFlem2023} to derive \emph{nonasymptotic rates of convergence} is a challenging problem which remains largely open.
The work of \citet{cardaliaguet2019master} provided a major breakthrough.
These authors derived a convergence rate of the value function of the $N$--player game to that of the mean field game, as well as rates for the convergence of $m^N_{\bX_t}$ to $m_t$.
These results follow as a consequence of a fine analysis of the so called \emph{master equation}, a partial differential equation written on the space of measures describing the value function of the mean field game.
Notably, \citet{cardaliaguet2019master} showed that bounding the (second) measure derivative of the solution of the master equation allows to derive quantitative convergence results.
See also \citet{Card17} and \citet{delarue2020master,delarue2019master} for extensions of this idea, notably allowing to obtain a central limit theorem and various concentration bounds.
A key condition guaranteeing bounds on the gradient of the solution of the master equation is the \emph{Lasry--Lions monotonicity} condition.
\begin{equation*}
  \int_{\R^n}\big(G(x,m^1) - G(x,m^2)\big)d(m^1 -m^2)(x)\ge0.
\end{equation*}
Essentially, this condition suggests that players reduce their cost by moving to less congested areas.
Unfortunately, even under this condition it is often very challenging to obtain (global in time) classical solution of the master equation without additional constraints on the data of the game.
For instance, only \citet{chassagneux2014probabilistic} discuss classical solvability of the master equation with non--constant volatility parameter.

A stochastic-analytic approach to the mean field game convergence problem was initiated by \citet{laurieretangpi} who argued that the problem can be reduced to a classical \emph{propagation of chaos} issue, but for interacting particles evolving forward and backward in time.
This approach has the benefit of not appealing to the master equation, but the results of \cite{laurieretangpi} require either that $T$ is small enough or that the drift $b$ is ``sufficiently dissipative" in $x$ (see the condition on the constant $K_b$ appearing in Theorem 2.2 of \cite{laurieretangpi}).
Moreover, the important case of games with common noise has not been investigated using this approach. We note that this ``forward-backward propagation of chaos" approach has since been used in other settings, for example in the recent work \cite{bayraktar2022propagation}, where it is used to study the convergence problem in the setting of graphon games.
We also refer to \cite{possamai2021non} for a similar method based on BSDEs (for games in the probabilistic weak formulation) but also requiring smallness or structural conditions on the terminal cost $G$.

The current work uses the fully probabilistic approach based on forward-backward propagation of chaos to derive quantitative mean field game convergence results.
The main novelties of our results are that (a) we treat simultaneously controlled volatility, common noise, and non-separable Hamiltonians and (b) we derive strong convergence of both the states and the controls with dimension-free rates (i.e. with rates that do not deteriorate with the dimension $n$ of the state process).
We emphasize that to the best of our knowledge, our results are the first convergence results (qualitative or quantitative) in the setting that the volatility of the common noise (the coefficient $\sigma^0$ in the game described above) is controlled.

We also provide new existence and uniqueness results for finite population games and mean field games.
Since the works of \citet{cardaliaguet2019master} and \citet{Car-Del-Lack16}, there has been an intensive activity around the solvability of mean field games with common noise.
\citet{ahuja2016} first recognized the importance of displacement monotonicity for wellposedness of MFG with common noise, and we refer also to  \cite{Gang-Mes22,Gang-Mes-Mou-Zhang22,Mes-Mou21} for other contributions related to displacement monotonicity. See also \cite{Mou-Zhang22} for alternative monotonicity conditions.
Solvability of games with controlled volatility has remained an intriguing challenge.
The present paper fills this gap by proving in Theorem \ref{thm:existence} that in addition to displacement monotonicity, the condition \eqref{hammonotoneintro} on the Hamiltonian of the game allows to prove existence and uniqueness of the controlled--volatility MFG.
Our argument relies on the continuation method of \cite{pengwumonotone} in the theory of FBSDEs.

The proofs are rather direct and seem versatile enough to be adapted to several cases.
As an illustration, we use similar techniques to address the mean field game convergence problem for infinite horizon problems in Section \ref{subsec:infinite}.

\subsection{Proof strategy and the role of displacement monotonicity}  \label{subsec:proofstrategy}

We approach the convergence problem for mean field games via the maximum principle, and in particular we follow the blueprint put forth in \cite{laurieretangpi}: 
\begin{enumerate}
\item Characterize the $N$-player game in terms of an $N$-dimensional FBSDE
\item Characterize the mean field game in terms of a (conditional) McKean-Vlasov FBSDE
\item Prove that (the solutions of) the $N$-dimensional FBSDEs converge toward (the solution of) the McKean-Vlasov FBSDE via stochastic-analytic methods
\end{enumerate}
The first two steps in the present setting are by now fairly standard, so it is the third step, which could be called \textit{forward-backward propagation of chaos}, which is the key to the argument. It is also in the third step where our approach differs significantly from that of \cite{laurieretangpi}. Indeed, in \cite{laurieretangpi} the third step is executed only under the assumption that the time horizon is sufficiently small or the drift is ``sufficiently dissipative". By contrast, the present work establishes forward-backward propagation of chaos by relying on the monotonicity conditions (\eqref{constantvolstruct}, \eqref{affinestructintro}, and \eqref{hammonotoneintro}).

To explain in more detail our strategy, we start by recalling the synchronous coupling approach introduced by \citet{sznitman} for establishing propagation of chaos for particle systems of the form
\begin{align} \label{intropartsystem}
dX_t^{N,i} = b(X_t^{N,i}, m_{\bX_t^N}^N) dt + dW_t^i, \quad X_0^{N,i} = \xi^i. 
\end{align}
Here $(\xi^i)_{i \in \N}$ is a sequence of i.i.d. initial conditions with common law $m_0$, $W^1,\dots,W^N$ are independent $d$-dimensional Brownian motions, and the unknown is $\bX^N = (X^{N,1},\dots,X^{N,N})$.
 It is expected that for fixed $k$ and $N$ large, $(X_t^{N,1},\dots,X_t^{N,k}) \approx m_t^{\otimes k}$,
where $m_t = \sL(X_t)$ is the law of $X_t$, and the process $X$ solves the McKean-Vlasov SDE 
\begin{align} \label{intromkeqn}
dX_t = b(X_t, \sL(X_t)) dt + dW_t, \quad X_0 = \xi \sim m_0. 
\end{align}
This is one interpretation of ``propagation of chaos" (at least in the special setting of i.i.d. initial data, rather than the more general case of $m_0$-chaotic initial data).
Synchronous coupling means producing a sequence of independent copies of the solution to \eqref{intromkeqn}, by solving for each $i \in \N$ the equation 
\begin{align} \label{intromkeqncopies}
  dX_t^i = b(X_t^i, \sL(X_t^i)) dt + dW_t^i, \quad X_0^i = \xi^i, 
\end{align}
where the driving Brownian motion $W^i$ and the initial condition $\xi^i$ are the same as the ones appearing in \eqref{intropartsystem}.

If \eqref{intromkeqn} admits a unique solution, then $(X_t^i)_{i \in \N}$ are i.i.d. with common law $m_t$, and convergence results for empirical measures thus imply that 
\begin{align} 
  m_t \approx m_{\bX_t}^N, 
\end{align}
where $m_{\bX_t}^N = \frac{1}{N} \sum_{i = 1}^N \delta_{X_t^i}$.
This easily implies that the tuple $(X^1,\dots,X^N)$ almost satisfies the $N$-particle SDE \eqref{intropartsystem}, i.e. solves \eqref{intropartsystem} up to a small error term, at least if $ b(x,\cdot)$ is globally Lipschitz in $m$ (using for instance the $2$-Wasserstein metric). 
To conclude that $X_t^i \approx X_t^{N,i}$ for $N$ large (and hence propagation of chaos), it suffices to show that the SDE \eqref{intropartsystem} enjoys a certain dimension-free stability property. When $b$ is Lipschitz in both arguments, this can be accomplished by studying the process $\sum_{i = 1}^N |X_t^{N,i} - X_t^i|^2$ and applying Gronwall's inequality. Making this whole argument quantitative with the help of \cite[Theorem 1]{fournier2015rate} ultimately leads to an estimate of the form
\begin{align*}
\E\Big[\sup_{0 \leq t \leq T} |X_t^{N,i} - X_t^i|^2 \Big] \leq Cr_N
\end{align*}
for an explicitly given rate $r_N$.

As pointed out in \cite{laurieretangpi}, the same synchronous coupling idea applies in our setting, and we now explain this point in detail (without common noise for simplicity). In our setting, the $N$-particle SDE is replaced by an $N$-dimensional FBSDE, and the McKean-Vlasov SDE \eqref{intromkeqn} is replaced by a McKean-Vlasov FBSDE (presented in Section \ref{subsec:smp}). Using the same coupling idea discussed above, we can construct independent copies of the solution of the limiting McKean-Vlasov FBSDE, which we denote by $(X^i, Y^i, Z^i)_{i \in \N}$. We can again rely on standard results on convergence of empirical measures to conclude that for $N$ large, the triple 
\begin{align*}
(\ov{\bX}^N, \ov{\bY}^N, \ov{\bZ}^N) = \Big((X^1,\dots,X^N), (Y^1,\dots,Y^N), (Z^1,\dots,Z^N) \Big)
\end{align*}
almost solves the FBSDE describing the $N$-player game, whose solution we denote by 
\begin{align*}
(\bX^N,\bY^N,\bZ^N) = \big((X^{N,1},\dots,X^{N,N}), (Y^{N,1},\dots,Y^{N,N}), (Z^{N,1},\dots,Z^{N,N}) \big). 
\end{align*}
That is, $(\ov{\bX}^N,\ov{\bY}^N, \ov{\bZ}^N)$ solves the same equation as $(\bX^N,\bY^N,\bZ^N)$ but with a small error term. To obtain a quantitative answer to the convergence problem, we need only some dimension-free stability for the FBSDE solved by $(\bX^N,\bY^N,\bZ^N)$. 
The main obstacle in executing this ``forward-backward propagation of chaos" argument is that unlike for the SDE case discussed above, standard regularity conditions will not be enough to guarantee that the $N$-dimensional FBSDE of interest enjoys any global in time dimension-independent stability. Either a small time horizon (as in \cite{laurieretangpi}) or an additional structural condition must be assumed in order to obtain the relevant stability property.

The main insight of the present paper is that the monotonicity conditions discussed above provide the relevant stability for the $N$-dimensional FBSDE. The role of displacement monotonicity is revealed by studying the dynamics of the process 
\begin{align} \label{lyap}
\sum_{i = 1}^N \big(X_t^{N,i} - X_t^{i} \big) \cdot \big(Y_t^{N,i} - Y_t^{i} \big), 
\end{align}
a strategy which is inspired by a well-known technique for handling classical FBSDEs with ``monotone" coefficients. This technique was introduced in \cite{hupengmonotone}, and then extended and applied to stochastic control and stochastic differential games in \cite{pengwumonotone}. It has since been generalized in many directions, and we refer to Section 8.4 of the recent textbook \cite{Zhang} and the references therein for more discussion. Here it  turns out that the relevant monotonicity condition (i.e. the condition which makes studying \eqref{lyap} an effective strategy) is precisely \eqref{hammonotoneintro} (together with the displacement monotonicity of $G$). In fact, the authors became aware during the preparation of this manuscript that the same technique was used recently in \cite{bayraktar2022propagation} (see in particular Section 4.2) in a similar way to obtain propagation of chaos for graphon games, though in a setting without common noise and controlled volatility, and without making use of displacement monotonicity (instead imposing in Assumption 4.2 (ii) a smallness-type condition). See also the proof of Theorem 4.9 in \cite{jacksonlacker} for a similar technique executed in a setting related to mean field control.

To understand the role of our monotonicity conditions, e.g. condition \eqref{hammonotoneintro}, the key is to observe what happens when $X,\ov{X}, Y, \ov{Y}, Z, \ov{Z}, Z^0, \ov{Z}^0$ are discrete random variables, and their joint distribution is given by
\begin{align*}
\bP[X = x^i, \ov{X} = \ov{x}^i, Y = y^i, \ov{Y} = \ov{y}^i, Z = z^i, \ov{Z} = \ov{z}^i ] = \frac{1}{N}, \quad i = 1,\dots,N,
\end{align*}
for some
\begin{align*}
        &\bx = (x^1,\dots,x^N), \,\, \ov{\bx} = (\ov{x}^1,\dots,\ov{x}^N), \,\, \by = (y^1,\dots,y^N), \,\, \ov{\by} = (\ov{y}^1,\dots,\ov{y}^N) \in (\R^n)^N, \\
        &\bz = (z^1,\dots,z^N), \,\, \ov{\bz} = (\ov{z}^1,\dots,\ov{z}^N), \,\, \bz^0 = (z^{0,1},\dots,z^{0,N}), \,\, \ov{\bz}^0 = (\ov{z}^{0,1},\dots,\ov{z}^{0,N}) \in (\R^{n \times d})^N, 
    \end{align*}
    Testing the inequality \eqref{hammonotoneintro} in this case leads to \begin{align} \label{hammonotone2intro}
        \sum_{i = 1}^N \bigg( &- \big(D_x H(x^i,y^i,z^i,z^{0,i},m_{\bx}^N) - D_x H(\ov{x}^i,\ov{y}^i,\ov{z}^i,\ov{z}^{0,i},m_{\ov{\bx}}^N) \big) \cdot (x^i - \ov{x}^i) 
        \nonumber \\
        &+  \big(D_y H(x^i,y^i,z^i,z^{0,i},m_{\bx}^N) - D_y H(\ov{x}^i,\ov{y}^i,\ov{z}^i,\ov{z}^{0,i}, m_{\ov{\bx}}^N) \big)\cdot (y^i - \ov{y}^i) 
        \nonumber \\
         &+  \big(D_z H(x^i,y^i,z^i,z^{0,i},m_{\bx}^N) - D_z H(\ov{x}^i,\ov{y}^i,\ov{z}^i,\ov{z}^{0,i},m_{\ov{\bx}}^N) \big) \cdot (z^i - \ov{z}^i)
        \nonumber  \\
         &+  \big(D_{z^0} H(x^i,y^i,z^i,z^{0,i},m_{\bx}^N) - D_{z^0} H(\ov{x}^i,\ov{y}^i,\ov{z}^i,\ov{z}^{0,i},m_{\ov{\bx}}^N) \big)  \cdot (z^{0,i} - \ov{z}^{0,i})  \bigg) \nonumber \\
         &\qquad \leq - C_H \sum_{i = 1}^N |x^i - \ov{x}^i|^2.
    \end{align}
It turns out that \eqref{hammonotone2intro} is exactly what is needed to obtain the relevant stability by studying the dynamics of the process \eqref{lyap}. We refer to the proofs of Propositions \ref{prop:mainestdrift}, \ref{prop:mainestaffine} and \ref{prop:mainest} for the details.

\subsection{Outline of the paper} 

In section \ref{sec:results}, we discuss some notations and preliminaries, describe the stochastic differential games under consideration, and finally state precisely our main results. Section \ref{sec:proofs} contains the proofs of our main convergence results. In particular, subsection \ref{subsec:smp} reviews the relevant version of the stochastic maximum principle, subsection \ref{subsec:technical} collects some technical lemmas, and subsections \ref{subsec:proofmaindrift} and \ref{subsec:proofmain} complete the proofs of our main convergence results. Section \ref{subsec:infinite} discusses an extension to infinite horizon mean field games. Finally, the appendix contains a well-posedness result for a class of McKean-Vlasov FBSDEs.

\section{Preliminaries and main results} \label{sec:results}

\subsection{Some general notations}
We start by mentioning that we will use $|\cdot|$ to denote the usual Euclidean norm on $\R^n$. We will also use $|\cdot|$ to denote the Frobenius norm for matrices, i.e. $|A| = \tr(A^{\top} A)$ for $A \in \R^{n \times d}$. The symbol $``\cdot"$ will be used both for the Euclidean inner product and for the Frobenius inner product, so that for $A, B \in \R^{n \times d}$, $A \cdot B = \tr(A^{\top} B)$.
Moreover, {In this work $\N$ is the set of non-zero integers.}

We will write $\sP(\R^n)$ for the space of probability measures on $\R^n$, and $\sP_q(\R^n)$ for the space of probability measures with finite $q^{th}$ moment, i.e. $m \in \sP_q(\R^n)$ if 
\begin{align*}
M_q(m) \coloneqq \int_{\R^n} |x|^q m(dx) < \infty. 
\end{align*}
The space $\sP_q(\R^n)$ is endowed with the usual Wasserstein metric defined as
\begin{align}
\label{eq:def.Wasserstein}
  \cW_q(m^1,m^2) \coloneqq \inf_{(X^1,X^2)} \Big(\E\big[|X^1-X^2|^q \big] \Big)^{1/q}, 
\end{align}
where the infimum is taken over all pairs of random variables $(X^1,X^2)$ defined on a common probability space and such that $X^i \sim  m^i$. 

\subsection{Probabilistic setup}
\label{subsec:probsetup}
We fix throughout the paper a time horizon $T > 0$ and a measure $m_0 \in \spt$. The measure $m_0$ will serve as the initial distribution for the relevant controlled diffusions. For simplicity (mainly to avoid any measure-theoretic subtleties) we find it useful to work on a canonical probability space. In particular, our $N$-player game will be set on the probability space 
\begin{align*}
(\Omega^N, \sF^N, \bP^N),
\end{align*}
where 
\begin{align*}
&\Omega^N = (\R^n)^N \times \cC^{N+1}, \text{ where } \cC = C([0,T]; \R^n), \\
&\sF^N \text{ is the Borel $\sigma$-algebra on $\Omega^N$}, \\
&\bP^N = (m_0)^{\otimes N} \otimes (\bP^0)^{\otimes (N+1)}, \text{ where $\bP^0$ is the Weiner measure on $\cC$.}
\end{align*}
We write $(\bx, \bm \omega) = (x^1,\dots,x^N, \omega^0, \omega^1,\dots,\omega^N)$ for a generic element of $\Omega^N$, where $x^i \in \R^n$ and $\omega^i = (\omega^i(t))_{0 \leq t \leq T} \in \cC$. We define the canonical random variables and processes 
\begin{align*}
\xi^i(\bx, \bm \omega) = x^i, \quad W^i_t(\bx, \bm \omega) = \omega^i(t).
\end{align*}
So $(\xi^i)_{i = 1,\dots,N}$ are i.i.d. random variables with common law $m_0$, and the $(W^i)_{i = 0,\dots,N}$ are independent Brownian motions. We endow $(\Omega^N, \sF^N, \bP^N)$ with the filtration $\bbF^N = (\sF^N_t)_{0 \leq t \leq T}$, which is the augmented filtration generated by $\xi^1,\dots,\xi^N$ and $W^0, W^1,\dots,W^N$. We also define $\bbF^0 = (\sF^0_t)_{0 \leq t \leq T}$ to be the augmented filtration of the Brownian motion $W^0$, and set $\sF^0 = \sF^0_T$. We will use $\E[\cdot]$ to denote expectations with respect to any of the measures $\bP^N$. We note that we are using various notations here which suppress dependence on $N$, e.g. this is the case with the random variables denoted $\xi^i$ and the Brownian motions denoted $W^i$, as well as the filtrations denoted $\bbF^0$. This notational convention is quite natural, however, since the filtered probability space $(\Omega^N,\sF^N,\bbF^N, \bP^N)$ embeds naturally into $(\Omega^M,\sF^M,\bbF^M, \bP^M)$ for $M >  N$ and so objects defined on $(\Omega^N,\sF^N,\bbF^N, \bP^N)$ extend in a natural way to objects defined on $(\Omega^M,\sF^M,\bbF^M, \bP^M)$. For example, a random variable $\eta$ defined on $\Omega^N$ extends to one on $\Omega^M$ via $\eta(x^1,\dots,x^N,\dots,x^M, \omega^0,\dots,\omega^N,\dots,\omega^M) = \eta(x^1,\dots,x^N, \omega^0,\dots,\omega^N)$. Thus our notational convention to suppress dependence on $N$ for various objects simply expresses a preference not to distinguish between e.g. a random variable defined on $\Omega^N$ and its natural extension to $\Omega^M$. 

To discribe the limiting model we will need only two Brownian motions.
In fact, it will be set on the filtered probability space $(\Omega, \sF, \bbF, \bP) \coloneqq (\Omega^1,\sF^1, \bbF^1, \bP^1)$. We write $(x,\omega^0, \omega)$ for the general element of $\Omega$ and set $W \coloneqq W^1$ when we are working with this space. One advantage of working with these explicitly constructed spaces is that it will facilitate a coupling procedure, through which any random variable (or process) defined on $\Omega$ determines a sequence of random variables (or processes) defined on $\Omega^N$. This coupling procedure is essential later on, where it is used to prove our main ``conditional propagation of chaos" results. Therefore we outline the procedure carefully here. Observe that given an $\cF_t$--measurable random variable $\eta$ (defined on $\Omega$), we can write (up to identifying random variables which are almost surely equal) 
\begin{align*}
\eta(x, \omega^0,\omega) = \wt{\eta}(x, \omega^0_{[0,t]}, \omega_{[0,t]}) = \wt{\eta}(\xi^1, W^0_{[0,t]}, W_{[0,t]})(x, \omega^0,\omega)
\end{align*} 
where 
$\wt{\eta}$ is a measurable map on $\R^n \times \cC_t \times \cC_t$, with $\cC_t = C([0,t] ; \R^d)$ endowed with the usual Borel $\sigma$-algebra. This representation of $\eta$ facilitates the production of a number of (conditionally) independent copies of $\eta$. In particular, for a fixed $N \in \N$ and $i \in \{1,\dots,N\}$, we can define a random variable $\eta^i$ on $\Omega^N$ by the formula
\begin{align*}
\eta^i(\bx, \bm \omega) = \wt{\eta}(x^i, \omega^0_{[0,t]}, \omega^i_{[0,t]}) = \wt{\eta}(\xi^i, W^0_{[0,t]}, W^i_{[0,t]})(\bx, \bm \omega). 
\end{align*}
Obviously for each $N$ the random variables $(\eta^1,\dots,\eta^N)$ defined on the probability space $(\Omega^N, \sF^N, \bP^N)$ in this manner are i.i.d. conditionally on $\bbF^0$, with common conditional law
 \begin{equation} \label{conditionallaw}
  \Big(\cL(\eta^i\mid \cF^0_t)(\bx, \bm \omega) \Big)(A) =
   \P\Big[\wt{\eta}\big(\xi, W^0_{[0,t]}(\bx, \bm \omega), W^1_{[0,t]}\big) \in A \Big].
\end{equation}
The right-hand side of \eqref{conditionallaw} is the one place in the paper where using $W^0$ for a Brownian motion defined on $\Omega$ and $\Omega^N$ is confusing. So to clarify, the $W^0$ appearing in the right side of the equality \eqref{conditionallaw} is the one defined on $\bbF^N$, and it is being evaluated at the general element $(\bx, \bm \omega)$ of $\Omega^N$, and the probability on the right side could be written as 
\begin{align*}
\P\Big[\wt{\eta}\big(\xi, W^0_{[0,t]}(\bx, \bm \omega), W^1_{[0,t]}\big) \in A \Big] = \bP\bigg[ \Big\{(x,\omega^0, \omega) \in \Omega : \wt{\eta}\big(x, W_{[0,t]}^0(\bx, \bm \omega), \omega\big)  \in A \Big\} \bigg].
\end{align*}
This procedure naturally extends to processes, though it is slightly more subtle. Luckily, working with the canonical space makes things easier. Indeed, given an $\F$--progressive process $(X_t)_{t\in [0,T]}$, \cite[Proposition 1.2.1]{Zhang} shows that $X$ has a version which is progressive with respect to the \textit{raw} (i.e. not $\bP$-completed) filtration $\wt{\bbF}$ of $(\xi, W^0,W)$. 
But with this version fixed, it is clear that we have
\begin{equation*}
  X_t(x,\omega^0,\omega) = \wt{X}_t(\xi^1, W^0_{[0,t]}, W^1_{[0,t]}) (x,\omega^0,\omega),
\end{equation*}
where $\wt{X}_t : \R^d \times \cC_t \times \cC_t \to \R$ is defined simply as 
\begin{align*}
  \wt{X}_t(x, \omega^0, \omega) = X_t(x , \wt{\omega^0}, \wt{\omega}), \quad \text{where} \quad  \wt{\omega}(s) = \omega(s) 1_{s \leq t} + \omega(t) 1_{s > t}, \quad \wt{\omega^0}(s) = \omega^0(s) 1_{s \leq t} + \omega^0(t) 1_{s > t}.
\end{align*}
Moreover, \cite[Lemma 1.2.1]{Zhang} also shows that the $\wt{\bbF}$-progressive modification of $X$ can be chosen continuous if $X$ is continuous, so that the family of maps $\wt{X}_t$ are such that $t \mapsto \wt{X}_t(x, \omega^0_{[0,t]},\omega_{[0,t]})$ are continuous for each fixed $x \in \R^n, \omega,\omega^0 \in \cC$. Moreover, if we define
$(X^i_t)_{i\in \{1,\dots,N\}}$ by $X^i_t := \wt{X}_t(\xi^i, W^0_{[0,t]}, W^i_{[0,t]})$, then it is clear that for each fixed $t$, $(X^i_t)_{i\in \{1,\dots,N\}}$ form a family of i.i.d. random variables conditionally on $\sF^0_t$ for all $t \in [0,T]$.
This observation will play a key role in what follows, so we summarize the preceding discussion with a Lemma and Definition. 

\begin{lemma} \label{lem:coupling}
Given a $\bbF$-progressive (resp. continuous and $\bbF$-adapted) process $X$, there is a modification of $X$ which remains $\bbF$-progressive (resp. continuous and $\bbF$-adapted), and such that 
\begin{align*}
X_t(x, \omega, \omega^0) = \wt{X}_t(x, W^0_{[0,t]}, W_{[0,t]})
\end{align*}
for each $t, x, \omega, \omega^0$ and some collection of measurable maps $\wt{X}_t$ defined on $\R^d \times \cC_t \times \cC_t$. Moreover for each $N \in \N$, the processes $(X^i)_{i\in \{1,\dots,N\}}$ by $X^i_t := \wt{X}_t(\xi^i, W^0_{[0,t]}, W^i_{[0,t]})$ are $\bbF^N$-progressive (resp. continuous and $\bbF^N$-adapted), and for each fixed $t$, $(X^i_t)_{i\in \{1,\dots,N\}}$ form a family of i.i.d. random variables conditionally on $\sF^0_t$ 
\end{lemma}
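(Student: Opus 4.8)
The plan is to assemble the functional-representation machinery sketched in the paragraphs preceding the statement into the three assertions of the lemma: existence of a representable modification, $\bbF^N$-measurability of the copies, and their conditional independence. The cited results from \cite{Zhang} will carry the measure-theoretic weight, so the proof is largely a matter of organizing them correctly and then adding the conditional-independence argument.

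First I would establish the functional representation. Invoking \cite[Proposition 1.2.1]{Zhang}, I replace $X$ by a version that is progressive with respect to the \emph{raw} (non-augmented) filtration $\wt{\bbF}$ generated by $(\xi, W^0, W)$; in the continuous case \cite[Lemma 1.2.1]{Zhang} lets me choose this modification with continuous paths, and it remains $\bbF$-adapted since $\wt{\bbF} \subseteq \bbF$. Raw progressivity forces $X_t$ to be measurable with respect to $\wt{\sF}_t = \sigma(\xi, W^0_{[0,t]}, W_{[0,t]})$, so by the Doob--Dynkin lemma $X_t$ factors through the map $(x,\omega^0,\omega) \mapsto (\xi, W^0_{[0,t]}, W_{[0,t]})$. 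Concretely I would take $\wt{X}_t$ to be the freezing map $\wt{X}_t(x,\omega^0,\omega) = X_t(x,\wt{\omega^0},\wt{\omega})$ displayed above; since $X_t$ depends only on the restriction of the paths to $[0,t]$, freezing after $t$ leaves its value unchanged, giving $X_t(x,\omega^0,\omega) = \wt{X}_t(\xi, W^0_{[0,t]}, W_{[0,t]})(x,\omega^0,\omega)$ for every $(x,\omega^0,\omega)$, and $\wt{X}_t$ descends to a measurable map on $\R^n \times \cC_t \times \cC_t$.

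Next I would check that the copies inherit the right regularity. Setting $X^i_t := \wt{X}_t(\xi^i, W^0_{[0,t]}, W^i_{[0,t]})$, each $X^i_t$ is a measurable function of $(\xi^i, W^0_{[0,t]}, W^i_{[0,t]})$, which is $\sF^N_t$-measurable, so $X^i$ is $\bbF^N$-adapted; joint measurability of $(t,\bx,\bm\omega)\mapsto \wt{X}_t(\xi^i,W^0_{[0,t]},W^i_{[0,t]})$ (hence $\bbF^N$-progressivity), and continuity of $t\mapsto X^i_t$ in the continuous case, transfer directly from the corresponding properties of the map $\wt{X}$ secured by \cite[Proposition 1.2.1]{Zhang} and \cite[Lemma 1.2.1]{Zhang}.

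Finally, for the conditional law I would condition on $\sF^0_t$. By construction the blocks $(\xi^i, W^i)_{i=1,\dots,N}$ are i.i.d. and jointly independent of $W^0$; since each $X^i_t = \wt{X}_t(\xi^i, W^0_{[0,t]}, W^i_{[0,t]})$ is obtained by plugging these blocks, together with the \emph{shared} path $W^0_{[0,t]}$, into one fixed measurable map, freezing $W^0_{[0,t]}$ (which generates $\sF^0_t$ up to null sets) makes $(X^i_t)_i$ an i.i.d. family with the common conditional law recorded in \eqref{conditionallaw}. I expect the only genuine subtlety, and hence the main obstacle, to be the measure-theoretic bookkeeping: passing from $\bbF$-progressivity to the raw filtration and arranging joint measurability in $(t,\omega)$ so that progressivity and continuity survive the construction. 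That difficulty is precisely what the cited results from \cite{Zhang} dispatch; once the functional representation $\wt{X}_t$ is in hand, the conditional-independence claim reduces to the elementary fact that a fixed measurable map applied to i.i.d. inputs sharing a common, conditioned-upon argument yields conditionally i.i.d. outputs. Augmentation of $\bbF^0$ by $\bP^N$-null sets does not affect these conditional laws, so no extra care is needed there.
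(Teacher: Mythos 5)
Your proposal is correct and follows essentially the same route as the paper, which proves this lemma by exactly the discussion you reconstruct: a raw-filtration progressive (or continuous) modification via \cite[Proposition 1.2.1]{Zhang} and \cite[Lemma 1.2.1]{Zhang}, the path-freezing map to define $\wt{X}_t$, and conditional i.i.d.-ness of the copies from plugging the i.i.d. blocks $(\xi^i, W^i)$ and the shared path $W^0_{[0,t]}$ into one fixed measurable map. No gaps.
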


\begin{definition} \label{def:condindcopies}
Given a $\bbF$-progressive (resp. continuous and $\bbF$-adapted) process $X$ and an $N \in \N$, the processes $(X^i)_{i = 1,\dots,N}$ defined in Lemma \ref{lem:coupling} are called the conditionally independent copies of $X$.
\end{definition}

We have one more measure-theoretic preliminary to discuss. Given a sufficiently regular and square integrable process $X$ defined on $\Omega$, we would like to choose regular version of the $\sP_2(\R^d)$-valued process $\big(\sL(X_t \mid \sF_t^0)\big)_{0 \leq t \leq T}$. Luckily, \cite[Lemma 2.5]{cardelbook2} gives exactly what we need. We state a consequence of their result here. 

\begin{lemma} \label{lem:conditionallaw}
Given a continuous and $\bbF$-adapted process taking values in some $\R^n$ and satisfying 
\begin{align*}
\E\Big [\sup_{0 \leq t \leq T} |X_t|^2 \Big] < \infty, 
\end{align*}
we can choose for each $t$ a version of $\sL(X_t \mid \sF_t^0)$ such that the process $\big(\sL(X_t \mid \sF_t^0)\big)_{0 \leq t \leq T}$ has continuous paths in $\spt$. 
\end{lemma}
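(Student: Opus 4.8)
The plan is to build the continuous version explicitly from the coupling representation of Lemma \ref{lem:coupling}, rather than invoking an abstract continuity criterion for $\spt$-valued processes. First I would fix the $\wt{\bbF}$-progressive, pathwise-continuous modification $\wt{X}$ of $X$ furnished by Lemma \ref{lem:coupling}, so that $X_t = \wt{X}_t(\xi, W^0_{[0,t]}, W_{[0,t]})$ and $t \mapsto \wt{X}_t(x, \omega^0_{[0,t]}, \omega_{[0,t]})$ is continuous for every fixed $(x, \omega^0, \omega)$. For each frozen path $\omega^0 \in \cC$, I would define $\mu_t(\omega^0) \in \spt$ to be the law of $\wt{X}_t(\xi, \omega^0_{[0,t]}, W_{[0,t]})$, integrating out only $(\xi, W)$ against $m_0 \otimes \bP^0$ (these being independent of $W^0$). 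Since $\mu_t(\omega^0)$ depends only on $\omega^0_{[0,t]}$, it is $\sF_t^0$-measurable, and exactly as in \eqref{conditionallaw} it is a regular version of $\sL(X_t \mid \sF_t^0)$; so it remains only to show that $t \mapsto \mu_t(\omega^0)$ is $\cW_2$-continuous for $\bP^0$-a.e.\ $\omega^0$.

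For the continuity estimate, the key observation is that feeding the \emph{same} pair $(\xi, W)$ into $\wt{X}_s(\cdot, \omega^0, \cdot)$ and $\wt{X}_t(\cdot, \omega^0, \cdot)$ produces a coupling of $\mu_s(\omega^0)$ and $\mu_t(\omega^0)$, whence
\begin{align*}
\cW_2^2\big(\mu_s(\omega^0), \mu_t(\omega^0)\big) \leq \int \big| \wt{X}_t(x, \omega^0_{[0,t]}, \omega_{[0,t]}) - \wt{X}_s(x, \omega^0_{[0,s]}, \omega_{[0,s]}) \big|^2 \, m_0(dx)\, \bP^0(d\omega).
\end{align*}
For fixed $(x, \omega)$ the integrand tends to $0$ as $s \to t$ by pathwise continuity of $\wt{X}$, and it is dominated by $4 \sup_{r} |\wt{X}_r(x, \omega^0_{[0,r]}, \omega_{[0,r]})|^2$. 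The one point requiring care is the integrability of this dominating function: a priori we only know $\E[\sup_r |X_r|^2] < \infty$, i.e.\ the triple integral over $(x, \omega, \omega^0)$ is finite. By Fubini, the inner double integral in $(x, \omega)$ is finite for $\bP^0$-a.e.\ $\omega^0$, and on this full-measure set dominated convergence yields $\cW_2(\mu_s(\omega^0), \mu_t(\omega^0)) \to 0$ as $s \to t$, i.e.\ continuity of the path $t \mapsto \mu_t(\omega^0)$.

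The main (and essentially only) obstacle is precisely this last step, the passage from mean-square continuity to genuine pathwise continuity. Working on the canonical space lets me bypass a Kolmogorov-type argument for measure-valued processes entirely: the continuity of the coupling map $\wt{X}$ from Lemma \ref{lem:coupling} reduces everything to dominated convergence carried out path-by-path in $\omega^0$, with the Fubini step supplying the dominating function on a set of full $\bP^0$-measure. I would also note that taking expectations in the displayed bound recovers $\E[\cW_2^2(\mu_s, \mu_t)] \leq \E[|X_t - X_s|^2] \to 0$, confirming consistency with the conditional-law interpretation. Finally, since the statement is attributed to \cite[Lemma 2.5]{cardelbook2}, an alternative route would be to simply verify that the hypotheses of that lemma hold and quote it; but the self-contained argument above is short and exploits the explicit structure already assembled in Lemma \ref{lem:coupling}.
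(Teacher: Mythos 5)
Your argument is correct. Note first that the paper does not actually prove this lemma: it states it as a direct consequence of \cite[Lemma 2.5]{cardelbook2} and moves on, so there is no in-paper proof to match. What you have written is a self-contained reconstruction that exploits exactly the structure the paper sets up in Lemma \ref{lem:coupling}: freezing $\omega^0$, defining $\mu_t(\omega^0)$ as the law of $\wt{X}_t(\xi,\omega^0_{[0,t]},W_{[0,t]})$ under $m_0\otimes\bP^0$ (a version of $\sL(X_t\mid\sF_t^0)$ by independence of $(\xi,W)$ from $W^0$, as in \eqref{conditionallaw}), and then using the synchronous coupling in $(\xi,W)$ to bound $\cW_2^2(\mu_s(\omega^0),\mu_t(\omega^0))$ by the $L^2(m_0\otimes\bP^0)$-distance of $\wt{X}_s$ and $\wt{X}_t$. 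The two points that need care — that the dominating function $4\sup_r|\wt{X}_r|^2$ is integrable in $(x,\omega)$ for $\bP^0$-a.e.\ $\omega^0$ (Fubini from $\E[\sup_r|X_r|^2]<\infty$), and that this same Fubini step guarantees $\mu_t(\omega^0)\in\spt$ for all $t$ simultaneously on a full-measure set — are both handled. This is essentially the argument behind the cited lemma in \cite{cardelbook2}, so your route buys a self-contained proof at no extra cost, whereas the paper simply outsources it; either is acceptable.
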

In what follows, we will always work with such a version.

\subsection{Calculus on the space of measures}

As usual in the theory of mean field games, we will make use of a calculus for functions defined on $\spt$, which is explained in detail in \cite[Chapter 5]{cardelbook1}. Given a function $U :  \R^d \times \spt \ni (x,m) \mapsto  U(x,m)\in \R $ for some $n,d\in \N$, we will denote by $D_x U(x,m)$ the (standard) derivative in $x$ and we will say that $U$ is $C^1$ if $D_xU$ exists and is jointly continuous and there is a continuous map 
\begin{align*}
  \R^d \times \spt \times \R^n \ni (x,m,y) \mapsto D_m U(x,m,y) \in \R^n
\end{align*}
such that for each fixed $(x,m)$, $y \mapsto D_m U(x,m,y)$ is a version of the (Lions-) Wasserstein derivative of $U(x,\cdot)$ at $m$. 
As explained in \cite{cardelbook1}, this means that for each fixed $x$, the map 
\begin{align*}
  \L^2(\Omega) \ni \xi \mapsto  U(x,\sL(\xi))
\end{align*}
is (Fr\'echet) differentiable, and its gradient at the point $\xi \in \L^2(\Omega)$ is the random vector $ D_m U(x, \sL(\xi), \xi)$.

\subsection{The stochastic differential games}
\label{subsec:gamesetup}
Let us now introduce the stochastic differential games that will be analyzed in this work.
We present the game in a general framework.
Specific assumptions will be made on the coefficient to derive our respective results.
Let us fix dimensions $d,k,n \in \N$. In what follows, $d$ will be the dimension of the relevant Brownian motions, $m$ will be the dimension of the controlled diffusion, and admissible controls will be $\R^k$-valued. The data for our $N$-player and mean field games will consist of five functions
\begin{align*}
b &: \R^n \times \R^k \times \spt \to \R^n, \\
\sigma, \sigma^0 &: \R^n \times \R^k \times \spt \to \R^{n \times d}, \\
L &: \R^n \times \R^k \times \spt \to \R, \\
 G &: \R^n \times \spt \to \R, 
\end{align*}
as well as the initial measure $m_0 \in \sP_2(\R^n)$.
Although we do not make it explicit to simplify the notation, the functions $b,\sigma,\sigma^0$, and $L$ can be made time dependent if one so desires.
The conditions assumed on these functions below will have to hold uniformly in $t$.

\subsubsection{The $N$-player game}
In the $N$-player game, the state $X^i$ of player $i \in \{1,\dots,N\}$ is modeled by the controlled dynamics
\begin{align} \label{xdynamics}
  dX_t^i = b(X_t^i, \alpha_t^i, m_{\bX_t}^N) dt + \sigma(X_t^i, \alpha_t^i, m_{\bX_t}^N) dW_t^i + \sigma^0(X_t^i, \alpha_t^i, m_{\bX_t}^N) dW_t^0, \quad 
X_0^i = \xi^i. 
\end{align}
where the controlled process $\alpha$ belongs to the set of admissible controlled $\cA_N$ defined as
\begin{align} \label{def:an}
  \sA_N \coloneqq \bigg\{\text{$\bbF^N$-progressive $\R^k$-valued processes $\alpha = (\alpha_t)_{0 \leq t \leq T}$ such that $\E\Big[\int_0^T |\alpha_t|^2 dt\Big] < \infty$ } \bigg\}.
\end{align}
Here $\bX = (X^1,\dots,X^N)$, and given $\bx = (x^1,\dots,x^N) \in (\R^d)^N$, we denote $m_{\bx}^N = \frac{1}{N} \sum_{i = 1}^N \delta_{x^i}$.

Player $i$ aims at minimizing the cost functional $J^{N,i} : \sA_N^N \to \R$ defined by
\begin{align} \label{nplayerobj}
  J^{N,i}(\bm \alpha) =  J^{N,i}(\alpha^1,\dots,\alpha^N) = \E\bigg[ \int_0^T L(X_t^i, \alpha_t^i, m_{\bX_t}^N) dt + G(X_T^i, m_{\bX_T}^N) \bigg].
\end{align}
A Nash equilibrium for the $N$-player game is a tuple $\bm \alpha = (\alpha^1,\dots,\alpha^N) \in \sA_N^N$ such that for each $i \in \{1,\dots,N\}$ and each $\alpha \in \sA_N$, 
\begin{align*}
J^{N,i}(\bm \alpha ) \leq J^i(\bm \alpha^{-i}, \alpha), 
\end{align*}
where $(\bm \alpha^{-i}, \alpha)$ is shorthand for the tuple $(\alpha^1,\dots,\alpha^{i-1}, \alpha, \alpha^{i+1},\dots,\alpha^N)$. We note that given $\bm \alpha = (\alpha^1,\dots,\alpha^N) \in \sA_N^N$, we refer to the solution $\bX = (X^1,\dots,X^N)$ of \eqref{xdynamics} as the \textit{state process corresponding to $\bm \alpha$}. For simplicity, we refer to the game described above simply as \textit{the $N$-player game } which is in fact determined by the data $(b,\sigma, \sigma^0, L,G,  m_0)$.  

\subsubsection{The mean field game} 
In the corresponding mean field game, we work on the filtered probability space $(\Omega, \sF, \bbF, \bP)$ introduced in subsection \ref{subsec:probsetup}. An $\bbF^0$-progressive $\sP_2(\R^n)$-valued process $m = (m_t)_{0 \leq t \leq T}$ representing the (conditional) distribution of the population's state is fixed and the representative agent chooses an $\R^k$--valued control $\alpha$ from the set $\sA$ of admissible controls defined as
\begin{align} \label{def:a}
  \sA \coloneqq \bigg\{\text{$\bbF$-progressive $\R^k$--valued processes $\alpha = (\alpha_t)_{0 \leq t \leq T}$ such that $\E\Big[\int_0^T |\alpha_t|^2 dt\Big] < \infty$ } \bigg\}
\end{align}
in order to minimize the cost $J_m :\sA \to \R$ defined\footnote{Our assumptions below will guarantee that indeed $J^{N,i}$ and $J_m$ are finite valued.} by
\begin{align} \label{mfopt}
  J_m(\alpha) = \E\bigg[  \int_0^T L(X_t, \alpha_t, m_t) dt + G(X_T,m_T) \bigg]
\end{align}
subject to the controlled state dynamics
\begin{align} \label{dynamicsdef}
dX_t =  b(X_t, \alpha_t, m_t) dt +\sigma(X_t,\alpha_t, m_t)d W_t + \sigma^0(X_t, \alpha_t, m_t) d W_t^0, \quad X_0 = \xi. 
\end{align}
A mean field Nash equilibrium or simply a mean field equilibrium (MFE) is a pair $(m, \alpha)$ where $m$ is a continuous and $\bbF^0$-adapted $\sP_2(\R^d)$-valued process and $\alpha \in \sA$ is a minimizer of $J_{m}$ with corresponding state process $X$, such that
\begin{align} 
  m_t = \sL(X_t | \sF_t^0), \,\, d\bP\text{--a.e. for each } t\in [0,T].
\end{align}
We note that given an $\alpha \in \sA$, we call the solution $X$ to \eqref{dynamicsdef} the \textit{state process corresponding to $\alpha$}.
 For simplicity, in the remainder of the paper we refer to the limiting model described here simply as \textit{the mean field game}, which is determined by the data $(b,\sigma^0, \sigma, L,G,m_0)$.

\subsection{Main results} 

Now we state precisely the main results of the paper, first in the case of constant volatility, then in the case of affine dynamics, and then in the general case.

\subsubsection{Main results for the constant volatility case} 
\label{subsec:driftcontrol}

We consider in this section the case that $k = n$, and
\begin{align} \label{def:lineardrift}
  b(x,a,m) = a,\quad \sigma(x,a,m)= \Sigma, \quad \sigma^0(x,a,m) =\Sigma^0, \text{ and} \quad L(x,a,m) = L_0(x,a) + F(x,m) 
\end{align}
for some fixed matrices $\Sigma, \Sigma^0\in \R^{n\times d}$. So, the data of the game in this setting consists of $\Sigma, \Sigma^0 \in \R^{n \times d}$, together with $L_0 : \R^n \times \R^n \to \R$, $F,G : \R^n \times \spt \to \R$ as well as $m_0 \in \sP_2(\R^n)$. In this setting, it is convenient to work with the reduced Hamiltonian $H_0 = H_0(x,y) : \R^n \times \R^n \to \R$ given by 
\begin{align} \label{def:reducedham}
H_0(x,y) = \inf_{a \in \R^n} \big( L_0(x,a) + a \cdot y\big).
\end{align}

To state our results precisely, we begin by stating the regularity assumptions which will be used in our analysis.

\begin{assumption}[Basic regularity requirements in the constant volatility case] \label{assump:driftcontrolreg}
The data $b, \sigma, \sigma^0$ are of the form \eqref{def:lineardrift}. The Lagrangian $L_0$ is $C^1$, and the derivatives $D_xL_0$ and $D_a L_0$ are Lipschitz continuous in all arguments. The coupling terms $F$ and $G$ are $C^1$, and the maps 
\begin{align*}
&\R^n \times \spt \ni (x,m) \mapsto D_x F(x,m), \quad \R^n \times \spt \times \R^n \ni (x,m,y) \mapsto D_m F(x,m,y), \\
&\R^n \times \spt \ni (x,m) \mapsto D_x G(x,m), \quad \R^n \times \spt \times \R^n \ni (x,m,y) \mapsto D_m G(x,m,y)
\end{align*}
are each Lipschitz continuous in all arguments.
\end{assumption}

Next, we state the main structural assumption on the game. 

\begin{assumption}[Main structural condition in the constant volatility case] \label{assump:driftcontrolstruct}
There is a constant $C_L > 0$ such that
\begin{align} \label{jointconvex}
\big(D_x L_0(x,a) - D_x L_0(\ov{x},\ov{a})\big)\cdot(x - \ov{x}) + \big(D_a L_0(x,a) - D_a L_0(\ov{x},\ov{a})\big)\cdot(a - \ov{a}) \geq C_L |a - \ov{a}|^2
\end{align}
for all $x,\ov{x},a,\ov{a} \in \R^n$. 
There are (possibly negative) real numbers $C_{F}$ and $C_{G}$ such that $F$ is $ C_{F}$-displacement monotone and $G$ is $ C_{G}$-displacement monotone, and we have
\begin{align} \label{convexmonotone}
  C_L + (C_{G} \wedge 0) T + \frac{(C_{F} \wedge 0) T^2}{2}  > 0.
\end{align} 
\end{assumption}

\begin{remark}
The inequality \eqref{convexmonotone} represents the key structural condition on the game. It reveals a precise trade-off between convexity of the Lagrangian $L$, displacement (semi-)monotonicity of the coupling terms $F$ and $G$, and the size of the time horizon $T$. In particular, if $F$ and $G$ are displacement monotone (i.e. $C_F, C_G \geq 0$), then \eqref{convexmonotone} automatically holds since $C_L > 0$. On the other hand for any finite value of $C_G$ and $C_F$, \eqref{convexmonotone} holds when $T$ is small enough. 
\end{remark}

We need one additional assumption, which is needed only to apply a result of \cite{CST}.

\begin{assumption}[Smoothness condition for the constant volatility case] \label{assump:cstconstvol}
For each fixed $x$, the maps $m \mapsto D_x  F(x,m)$ and $m \mapsto D_x G(x,m)$ admit derivatives 
\begin{align*}
D_m D_x F(x,m,y) = D_m\big[D_x F(x,\cdot)](m,y), \quad D_{mm} D_x F(x,m,y,z) = D_{mm} \big[D_x F(x,\cdot)\big](m,y,z), \\
D_m D_x G(x,m,y) = D_m\big[D_x G(x,\cdot)](m,y), \quad D_{mm} D_x G(x,m,y,z) = D_{mm} \big[D_x G(x,\cdot)\big](m,y,z)
\end{align*}
which are bounded and Lipschitz continuous, uniformly in $x$.
\end{assumption}

Here is our main result in the context of constant volatility.

\begin{theorem} \label{thm:pocdriftcontrol}
Let Assumptions \ref{assump:driftcontrolreg}, \ref{assump:driftcontrolstruct}, and \ref{assump:cstconstvol} hold. Suppose that there is an MFE $(m,\alpha)$, and for each $N \in \N$ there is a Nash equilibirum $\bm \alpha^N = (\alpha^{N,1},\dots,\alpha^{N,N})$ for the $N$-player game. Let $\bX^N = (\bX^{N,1},\dots,\bX^{N,N})$ denote the state process corresponding to $\bm \alpha^N$. Then there is a constant $C$ independent of $N$ such that for each $N \in \N$ and $i \in \{1,\dots,N\}$, we have
\begin{align} \label{mainestdrift}
\E\bigg[\int_0^T |\alpha_t^{N,i} - \alpha_t^{i}|^2 dt \bigg] \leq \frac{C}{N}, 
\end{align}
where $(\alpha^i)_{i= 1,\dots,N}$ are conditionally independent copies of $\alpha$ as in Definition \ref{def:condindcopies}.
As a consequence, there is a constant $C$ independent of $N$ such that for each $N \in \N$  and $i \in \{1,\dots,N\}$, we have 
\begin{align*}
\E\Big[\sup_{0 \leq t \leq T} |X_t^{N,i} - X_t^i|^2\Big] \leq \frac{C}{N}, 
\end{align*}
where $(X^i)_{i = 1,\dots,N}$ are conditionally independent copies of $X$.
\end{theorem}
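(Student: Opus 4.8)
The plan is to follow the three-step blueprint of subsection~\ref{subsec:proofstrategy}. First I would invoke the stochastic maximum principle of subsection~\ref{subsec:smp} to characterize the equilibria by FBSDEs. Since $L_0$ is jointly convex by \eqref{jointconvex}, the maximum principle is both necessary and sufficient, so the $N$-player equilibrium $\bm\alpha^N$ corresponds to the solution $(\bX^N,\bY^N,\bZ^N)$ of the $N$-dimensional FBSDE with forward drift $\alpha^{N,i}_t$, adjoint drift $-\big(D_xL_0(X^{N,i}_t,\alpha^{N,i}_t)+D_xF(X^{N,i}_t,m^N_{\bX_t})+\tfrac1N D_mF(X^{N,i}_t,m^N_{\bX_t},X^{N,i}_t)\big)$, terminal condition $Y^{N,i}_T=D_xG(X^{N,i}_T,m^N_{\bX_T})+\tfrac1N D_mG(X^{N,i}_T,m^N_{\bX_T},X^{N,i}_T)$, and optimality relation $D_aL_0(X^{N,i}_t,\alpha^{N,i}_t)+Y^{N,i}_t=0$. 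In parallel, the conditionally independent copies $(X^i,Y^i,Z^i,Z^{0,i},\alpha^i)$ of Definition~\ref{def:condindcopies} solve the analogous McKean--Vlasov FBSDE driven by $(W^i,W^0)$, with $F,G$ evaluated at the true conditional law $m_t=\sL(X^i_t\mid\sF^0_t)$ and with no $\tfrac1N$ measure-derivative terms. Writing $\Delta X^i=X^{N,i}-X^i$, $\Delta Y^i=Y^{N,i}-Y^i$ and $\Delta\alpha^i=\alpha^{N,i}-\alpha^i$, the central object will be the coupling functional $\sum_{i=1}^N \Delta X^i_t\cdot\Delta Y^i_t$, exactly as in \eqref{lyap}. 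The crucial simplification special to this case is that, since $X^{N,i}$ and $X^i$ share the \emph{same} constant diffusion coefficients $\Sigma,\Sigma^0$, the process $\Delta X^i$ has no martingale part and in fact $\Delta X^i_t=\int_0^t\Delta\alpha^i_s\,ds$.

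Next I would apply It\^o's formula to $\sum_i\Delta X^i_t\cdot\Delta Y^i_t$, integrate on $[0,T]$, and take expectations; the martingale terms vanish and $\Delta X^i_0=0$, leaving the identity $\E\big[\sum_i\Delta X^i_T\cdot\Delta Y^i_T\big]=\E\big[\int_0^T\sum_i\big(\Delta Y^i_t\cdot\Delta\alpha^i_t-\Delta X^i_t\cdot\Delta D^i_t\big)\,dt\big]$, where $\Delta D^i_t$ denotes the difference of the two adjoint drifts recorded above. Substituting the optimality relations $Y^{N,i}_t=-D_aL_0(X^{N,i}_t,\alpha^{N,i}_t)$ and $Y^i_t=-D_aL_0(X^i_t,\alpha^i_t)$ combines the $L_0$-contribution of the integrand into $-\big[(D_xL_0(X^{N,i}_t,\alpha^{N,i}_t)-D_xL_0(X^i_t,\alpha^i_t))\cdot\Delta X^i_t+(D_aL_0(X^{N,i}_t,\alpha^{N,i}_t)-D_aL_0(X^i_t,\alpha^i_t))\cdot\Delta\alpha^i_t\big]$, which by \eqref{jointconvex} is bounded above by $-C_L|\Delta\alpha^i_t|^2$. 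Testing the $C_F$-displacement monotonicity of $F$ against the empirical measures of $\{X^{N,i}_t\}$ and $\{X^i_t\}$ (conditionally on $\sF^0_t$), as in the passage from \eqref{def:dispsemimonotone} to \eqref{hammonotone2intro}, controls the $F$-contribution by $-C_F\sum_i|\Delta X^i_t|^2$ up to error terms, and the analogous test of the $C_G$-displacement monotonicity of $G$ bounds the terminal side below by $C_G\sum_i|\Delta X^i_T|^2$ up to error terms. Using $\Delta X^i_t=\int_0^t\Delta\alpha^i_s\,ds$ together with Cauchy--Schwarz gives the deterministic bounds $\sum_i|\Delta X^i_T|^2\le T\int_0^T\sum_i|\Delta\alpha^i_t|^2\,dt$ and $\int_0^T\sum_i|\Delta X^i_t|^2\,dt\le \tfrac{T^2}{2}\int_0^T\sum_i|\Delta\alpha^i_t|^2\,dt$, which are precisely the conversions producing the factors $T$ and $T^2/2$. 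Assembling these estimates collapses everything onto $\E\big[\int_0^T\sum_i|\Delta\alpha^i_t|^2\,dt\big]$, with leading coefficient exactly $c:=C_L+(C_G\wedge0)T+\tfrac{(C_F\wedge0)T^2}{2}$, which is strictly positive by \eqref{convexmonotone}.

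The main obstacle is to show that the accumulated error terms are bounded by a constant \emph{independent of $N$} (equivalently, of order $1/N$ per particle and, crucially, dimension-free). Two kinds of error appear. The $\tfrac1N D_m$ terms coming from differentiating the empirical measure are handled by Young's inequality: they split into a piece absorbable into $\E[\int\sum_i|\Delta X^i|^2]$ (hence, via the $T,T^2/2$ conversions, into a small multiple of $\E[\int\sum_i|\Delta\alpha^i|^2]$) plus a genuinely $O(1/N)$ remainder. The delicate terms are the measure-fluctuation errors $D_xF(X^i_t,\mu^N_t)-D_xF(X^i_t,m_t)$ and $D_xG(X^i_T,\mu^N_T)-D_xG(X^i_T,m_T)$, where $\mu^N_t:=\tfrac1N\sum_{j}\delta_{X^j_t}$ is the empirical measure of the copies. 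A naive Wasserstein bound would only give the dimension-dependent rate of \cite{fournier2015rate}; to obtain a dimension-free rate I would instead invoke the fluctuation estimate of \cite{CST}, whose hypotheses are exactly the bounded, Lipschitz second-order measure derivatives of $D_xF$ and $D_xG$ granted by Assumption~\ref{assump:cstconstvol}. Applied conditionally on $\sF^0$ to the conditionally i.i.d. copies, it yields $\E\big[|D_xF(X^i_t,\mu^N_t)-D_xF(X^i_t,m_t)|^2\big]\le C/N$ uniformly in $t$ and $i$ (and likewise for $G$), so that $\E\big[\sum_i|D_xF(X^i_t,\mu^N_t)-D_xF(X^i_t,m_t)|^2\big]\le C$; a Cauchy--Schwarz/Young step then bounds the corresponding cross terms by an absorbable multiple of $\E[\int\sum_i|\Delta X^i|^2]$ plus an $O(1)$ term.

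Combining the two previous paragraphs gives $(c-\eta)\,\E\big[\int_0^T\sum_i|\Delta\alpha^i_t|^2\,dt\big]\le C$ for a small $\eta>0$ of our choosing and a constant $C$ independent of $N$; taking $\eta<c$ yields the summed bound $\E\big[\int_0^T\sum_i|\Delta\alpha^i_t|^2\,dt\big]\le C$. Because the game is symmetric and the convexity and monotonicity hypotheses force the $N$-player equilibrium to be unique and hence exchangeable in $i$, the summands are identically distributed, so dividing by $N$ yields the per-player control estimate \eqref{mainestdrift}. Finally, the state estimate is immediate in this setting: from $\Delta X^i_t=\int_0^t\Delta\alpha^i_s\,ds$ and Cauchy--Schwarz, $\sup_{0\le t\le T}|\Delta X^i_t|^2\le T\int_0^T|\Delta\alpha^i_s|^2\,ds$, whence $\E[\sup_t|X^{N,i}_t-X^i_t|^2]\le C/N$ by \eqref{mainestdrift}. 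The bulk of the work, carried out in Proposition~\ref{prop:mainestdrift}, lies in making the error analysis of the third paragraph rigorous while keeping every constant independent of both $N$ and the dimension $n$.
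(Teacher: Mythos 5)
Your proposal is correct and follows essentially the same route as the paper's proof: the maximum principle characterization, It\^o's formula applied to $\sum_i \Delta X^i_t\cdot\Delta Y^i_t$, the substitution $Y = -D_aL_0(X,\alpha)$ combined with the joint convexity of $L_0$ and the projected displacement semi-monotonicity of $F$ and $G$, the conversions $\E[|\Delta X^i_t|^2]\le t\int_0^t\E[|\Delta\alpha^i_s|^2]ds$ producing the coefficient $C_L+(C_G\wedge0)T+\tfrac{(C_F\wedge0)T^2}{2}$, the dimension-free fluctuation bound from \cite{CST} applied conditionally on $\sF^0$ to control $D_xF(X^i_t,\ov m^N_t)-D_xF(X^i_t,\ov m_t)$, and finally exchangeability via uniqueness of the $N$-player equilibrium to pass from the summed to the per-player estimate. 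This is precisely the content of Proposition \ref{prop:mainestdrift}, Lemmas \ref{lem:errorterms}, \ref{lem:uniqueness} and \ref{lem:exchangeable}, and the proof of Theorem \ref{thm:pocdriftcontrol}.
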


\begin{remark}
While we have stated a convergence result under simpler conditions in the special case of constant volatility, we have elected not to give a corresponding existence statement. This is because existence in the case of linear drift control and constant volatility is already fairly well understood, and existence under similar assumptions has already been obtained in \cite{ahuja2016}.
\end{remark}

\begin{remark}
We should also point out that aside from the direct argument and the trade-off between displacement semi-monotonicity, convexity and smallness that we exhibit, the main novelty of this result is that we do not assume $\sigma$ and $\sigma^0$ to be uniformly non--degenerate.
Indeed, if $\sigma$ and $\sigma^0$ are non--degenerate and the data is sufficiently smooth and displacement monotone, it follows from \cite[Theorem 5.1]{Gang-Mes-Mou-Zhang22} that the (second) measure derivative of the solution of the master equation is bounded.
This bound together with the method of proof of \cite[Section 6.2]{cardaliaguet2019master} should allow to obtain a convergence rate for the value function and a propagation of chaos result (at least for closed-loop Nash equilibria).
\end{remark}

\subsubsection{Main results for affine dynamics}

Now we consider the case of affine dynamics, i.e. the case
\begin{align} \label{affinedata}
    b(x,a,m) = B_0 + B_1 a + B_2 x, \quad \sigma(x,a,m)  = \Sigma_0 + \Sigma_1 a + \Sigma_2 x, \quad \sigma^0(x,a,m) = \Sigma^0_0 + \Sigma^0_1 a + \Sigma^0_2 x
\end{align}
for some 
\begin{align*}
    B_0 \in \R^n, \quad B_1 \in \R^{n \times k}, \quad B_2 \in \R^{n \times n}, \quad \Sigma_0, \Sigma_0^0 \in \R^{n \times d} \quad \Sigma_1,\Sigma_1^0 \in (\R^{n \times d})^{k}, \quad \Sigma_2, \Sigma_2^0 \in (\R^{n \times d})^{n}.
\end{align*}
We mention that we are interpreting $\Sigma_1 \in (\R^{n \times d})^k$ as $\Sigma_1 = (\Sigma_1^1,\dots,\Sigma_1^k)$ with each $\Sigma_1^j \in \R^{n \times d}$, and $\Sigma_1 a = \sum_{j = 1}^k a^j\Sigma_1^j \in \R^{n \times d}$. We interpret the other products in \eqref{affinedata} similarly.

Here are the basic regularity requirements in this setting.

\begin{assumption}[Basic regularity requirements in the case of affine dynamics] \label{assump:affinereg}
The data $b, \sigma, \sigma^0$ are of the form \eqref{affinedata}. The Lagrangian $L$ is $C^1$, and the derivatives $D_x L$, $D_aL$, $D_mL$ are each Lipschitz continuous in all arguments. The map $G$ is $C^1$ and its derivatives $D_x G$ and $D_m G$ are Lipschitz continuous in all arguments.
\end{assumption}

We now present the main structural condition on the Lagrangian $L$, which is the same one used in \cite{Mes-Mou21}. 

\begin{assumption}[Main structural condition in the case of affine dynamics] \label{assump:affinestruct}

The terminal condition $G$ is displacement monotone, and there is a constant $C_L > 0$ such that
\begin{align} \label{affinestruct}
\E\Big[ &\Big(D_x L(X,\alpha, \sL(X)) - D_x L(\ov{X}, \ov{\alpha}, \sL(\ov{X})) \Big) \cdot (X - X') \nonumber \\
&\qquad +  \Big(D_a L(X,\alpha, \sL(X)) - D_aL(\ov{X}, \ov{\alpha}, \sL(\ov{X})) \Big) \cdot (\alpha - \ov{\alpha})  \Big] \geq C_L \E[|\alpha - \ov{\alpha}|^2]
\end{align}
holds for any square integrable random variables $X, \ov{X}$ and $\alpha, \ov{\alpha}$ taking values in $\R^n$ and $\R^k$, respectively.
\end{assumption}

We now state an additional regularity assumption analogous to Assumption \ref{assump:cstconstvol} in the constant volatility case.

\begin{assumption}[Smoothness for affine dynamics] \label{assump:cstaffine}
For each fixed $x,a$, the maps $m \mapsto D_x  L(x,a,m)$ and $m \mapsto D_a L(x,a,m)$ admit derivatives 
\begin{align*}
D_m D_x L(x,a,m,p) &= D_m\big[D_x L(x,a,\cdot)](m,p), \quad 
D_{mm} D_x L(x,a,m,p,q) &= D_{mm} \big[D_x L(x,a,\cdot)\big](m,p,q),
\\
D_m D_a L(x,a,m,p) &= D_m\big[D_a L(x,a,\cdot)](m,p), \quad 
D_{mm} D_a L(x,a,m,p,q) &= D_{mm} \big[D_a L(x,a,\cdot)\big](m,p,q)
\end{align*}
which are bounded and Lipschitz continuous in $m$, uniformly in $(x,a)$. Similarly, for each fixed $x$ the map $m \mapsto D_x G(x,m)$ admits two derivatives 
\begin{align*}
D_m D_x G(x,m,y) = D_m\big[D_x G(x,\cdot)](m,y), \quad D_{mm} D_x G(x,m,y,z) = D_{mm} \big[D_x G(x,\cdot)\big](m,y,z)
\end{align*}
which are bounded and Lipschitz continuous in $m$, uniformly in $x$. 
\end{assumption}

We now present our main results in the case of affine dynamics. The first is an existence result.

\begin{theorem} \label{thm:existenceaffine}
Let Assumption \ref{assump:affinereg} and \ref{assump:affinestruct} hold. Then there exists a unique mean field equilibirum.
\end{theorem}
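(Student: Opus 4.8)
The plan is to characterize the MFE via the stochastic maximum principle and then solve the resulting McKean--Vlasov FBSDE using the continuation method of \cite{pengwumonotone}, exactly as advertised in the introduction. First I would set up the relevant FBSDE. Define the full Hamiltonian
\begin{align*}
H(x,y,z,z^0,m) = \inf_{a \in \R^k} \Big( L(x,a,m) + b(x,a)\cdot y + \sigma(x,a) \cdot z + \sigma^0(x,a)\cdot z^0\Big),
\end{align*}
where in the affine case the coefficients $b,\sigma,\sigma^0$ are as in \eqref{affinedata}. Under the convexity built into Assumption \ref{assump:affinestruct} (which forces strict convexity of $L$ in $a$ via \eqref{affinestruct}), the infimum is attained at a unique measurable minimizer $\hat a = \hat a(x,y,z,z^0,m)$, and one shows $\hat a$ is Lipschitz in its arguments. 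The MFE is then characterized by the McKean--Vlasov FBSDE
\begin{align*}
&dX_t = b\big(X_t,\hat a_t,m_t\big)dt + \sigma\big(X_t,\hat a_t,m_t\big)dW_t + \sigma^0\big(X_t,\hat a_t,m_t\big)dW_t^0, \quad X_0 = \xi,\\
&dY_t = -D_x H\big(X_t,Y_t,Z_t,Z_t^0,m_t\big)dt + Z_t\,dW_t + Z_t^0\,dW_t^0, \quad Y_T = D_x G(X_T,m_T),
\end{align*}
with the coupling constraint $m_t = \sL(X_t\mid\sF_t^0)$. I would record this characterization (presumably referencing the version of the maximum principle in subsection \ref{subsec:smp}), noting that it provides a bijection between mean field equilibria and solutions of this FBSDE.

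Second, I would verify that this FBSDE falls within the scope of the well-posedness result for McKean--Vlasov FBSDEs promised in the Appendix. The key is to translate the structural hypotheses into the monotonicity condition needed by the continuation method. Concretely, I would check that Assumption \ref{assump:affinestruct} implies the Hamiltonian monotonicity \eqref{hammonotoneintro}: because the dynamics are affine, the maps $y\mapsto D_yH$, $z\mapsto D_zH$, $z^0\mapsto D_{z^0}H$ pick up only the linear coefficients $B_1,\Sigma_1,\Sigma_1^0$ acting through $\hat a$, and an envelope-theorem computation shows that the cross terms in \eqref{hammonotoneintro} collapse to exactly the left side of \eqref{affinestruct} evaluated at the optimal controls, yielding the bound $-C_H\E[|X-\ov X|^2]$ with the displacement monotonicity of $G$ supplying the terminal coercivity. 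The main obstacle here is precisely this envelope/first-order-condition bookkeeping: one must differentiate through the minimizer $\hat a$ and confirm that the affine structure makes the $(Y,Z,Z^0)$-monotonicity reduce cleanly to the joint convexity hypothesis \eqref{affinestruct}, which is the same reduction sketched for the separated case in the introduction.

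Third, with monotonicity \eqref{hammonotoneintro} and the regularity from Assumptions \ref{assump:affinereg} and \ref{assump:cstaffine} in hand, I would invoke the appendix well-posedness theorem (the continuation method) to obtain a unique solution $(X,Y,Z,Z^0)$ of the FBSDE, hence a unique MFE. Existence follows from solvability of the FBSDE; uniqueness follows from the standard a priori estimate at the heart of the continuation method, namely that for two solutions the process $\E[(X_t - \ov X_t)\cdot(Y_t - \ov Y_t)]$ has monotone dynamics forcing $X \equiv \ov X$ and $\alpha \equiv \ov\alpha$. I expect the bulk of the genuine work to lie in the second step (the monotonicity verification), since existence and uniqueness then reduce to citing the appendix; the remaining care is ensuring the minimizer $\hat a$ is sufficiently regular and that the conditional-law coupling $m_t = \sL(X_t\mid\sF_t^0)$ is handled within the conditional McKean--Vlasov framework set up in subsection \ref{subsec:probsetup}.
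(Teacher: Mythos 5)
Your first and third steps match the paper's route (the paper reduces Theorem \ref{thm:existenceaffine} to solvability of the conditional McKean--Vlasov FBSDE of Remark \ref{rmk:fbsdeaffinemfe} via Lemma \ref{lem:mpmfggen}, and then appeals to a continuation argument; see Remark \ref{rem.proof.affine}). But your second step contains a genuine gap: Assumption \ref{assump:affinestruct} does \emph{not} imply the Hamiltonian monotonicity \eqref{hamiltonianmonotone}. If you carry out the envelope computation you describe, the first-order condition $D_aL = -\big(B_1^\top y + \Sigma_1^\top z + (\Sigma_1^0)^\top z^0\big)$ does make the cross terms collapse to the left side of \eqref{affinestruct} evaluated at the minimizers, but the resulting bound is $\leq -C_L\,\E\big[|\hat a - \ov{\hat a}|^2\big]$, i.e.\ coercivity in the \emph{control}, not $\leq -C_H\,\E\big[|X-\ov X|^2\big]$. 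These are not interchangeable: take $L(x,a,m)=|a|^2/2$, which satisfies \eqref{affinestruct} with $C_L=1$, and test with $Y=\ov Y$, $Z=\ov Z$, $Z^0=\ov Z^0$ but $X\neq\ov X$; the left side of \eqref{hamiltonianmonotone} vanishes while the right side is strictly negative. Moreover Assumption \ref{assump:affinestruct} only requires $G$ displacement monotone, whereas Assumption \ref{assump:controlledvolstruct} requires \emph{strict} displacement monotonicity $C_G>0$. The paper is explicit on this point in Remark \ref{rem.proof.affine}: the affine system ``is not covered by Theorem \ref{thm.gen-existence}.'' So you cannot simply cite the appendix theorem.

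The repair is to rerun the continuation/contraction estimate with the coercivity in the correct variable. In the a priori estimate for $\E\big[\sum \Delta X\cdot\Delta Y\big]$, the monotonicity of $L$ gives control of $\E\big[\int_0^T|\Delta\alpha_t|^2\,dt\big]$ (with the terminal term nonnegative by displacement monotonicity of $G$, no strictness needed), and you then convert this into control of the state via the forward equation: since the dynamics are affine in $(x,a)$ with a Lipschitz minimizer $\hat a$, standard SDE stability yields $\E\big[\sup_{t}|\Delta X_t|^2\big]\leq C\,\E\big[\int_0^T|\Delta\alpha_t|^2\,dt\big]$, which closes the fixed-point loop. This is exactly the dynamic mechanism used in the affine convergence proof (Proposition \ref{prop:mainestaffine} and Case 3 of Lemma \ref{lem:uniqueness}), as opposed to the state-coercive mechanism of Proposition \ref{prop:mainest}. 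As a minor point, your step three also invokes Assumption \ref{assump:cstaffine}, which is not among the hypotheses of Theorem \ref{thm:existenceaffine} and is not needed for existence, only for the quantitative convergence estimates.
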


The proof of this statement is discussed in the appendix, see Remark \ref{rem.proof.affine}
\begin{remark}
  Theorem \ref{thm:existenceaffine} follows from the solvability of a conditional McKean-Vlasov FBSDE (see Lemma \ref{lem:mpmfggen}) thanks to a relevant version of the stochastic maximum principle.
  Note that the convexity properties of the maps $x\mapsto G(x,m)$  and $(x,a)\mapsto b(x,a,m) \cdot y + \sigma(x,a,m) \cdot z + \sigma^0(x,a,m) \cdot z^0 + L(x,a,m)$ for each $m$ which are needed to make the maximum principle sufficient are already guaranteed by displacement monotonicity of $G$ and the condition \eqref{assump:affinestruct}, see e.g. \cite{Mes-Mou21}.
\end{remark}
The following is our main convergence result in the case of affine dynamics.

\begin{theorem} \label{thm:pocaffine}
Let Assumptions \ref{assump:affinereg}, \ref{assump:affinestruct}, and \ref{assump:cstaffine} hold. Suppose that there is an MFE $(m,\alpha)$, and for each $N \in \N$ there is a Nash equilibirum $\bm \alpha^N = (\alpha^{N,1},\dots,\alpha^{N,N})$ for the $N$-player game. Let $\bX^N = (\bX^{N,1},\dots,\bX^{N,N})$ denote the state process corresponding to $\bm \alpha^N$. Then there is a constant $C$ independent of $N$ such that for each $N \in \N$ and $i \in \{1,\dots,N\}$, we have
\begin{align*}
\E\bigg[\int_0^T |\alpha_t^{N,i} - \alpha_t^{i}|^2 dt \bigg] \leq \frac{C}{N}, 
\end{align*}
where $(\alpha^i)_{i= 1,\dots,N}$ are conditionally independent copies of $\alpha$ as in Definition \ref{def:condindcopies}.
As a consequence, there is a constant $C$ independent of $N$ such that for each $N \in \N$  and $i \in \{1,\dots,N\}$, we have 
\begin{align*}
\E\Big[\sup_{0 \leq t \leq T} |X_t^{N,i} - X_t^i|^2\Big] \leq \frac{C}{N}, 
\end{align*}
where $(X^i)_{i = 1,\dots,N}$ are conditionally independent copies of $X$.
\end{theorem}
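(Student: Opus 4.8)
The plan is to follow the three-step blueprint of subsection~\ref{subsec:proofstrategy}, specialized to the affine structure \eqref{affinedata}. First I would invoke the stochastic maximum principle of subsection~\ref{subsec:smp} to replace both optimization problems by FBSDE systems. Under Assumptions~\ref{assump:affinereg} and \ref{assump:affinestruct}, the coercivity \eqref{affinestruct} forces the Hamiltonian to have a unique minimizer $\hat a=\hat a(x,y,z,z^0,m)$ that is Lipschitz in all arguments, so the $N$-player Nash equilibrium is characterized by an $N$-dimensional FBSDE for $(\bX^N,\bY^N,\bZ^N,\bZ^{0,N})$ with $\alpha^{N,i}_t=\hat a(X^{N,i}_t,Y^{N,i}_t,Z^{N,i}_t,Z^{0,N,i}_t,m^N_{\bX^N_t})$, while the MFE is characterized by the conditional McKean--Vlasov FBSDE of Lemma~\ref{lem:mpmfggen}. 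Taking conditionally independent copies as in Definition~\ref{def:condindcopies}, whose per-particle components I write $(X^i,Y^i,Z^i,Z^{0,i})$ and collect as $(\ov{\bX},\ov{\bY},\ov{\bZ},\ov{\bZ}^0)$, produces a decoupled system whose controls are exactly the copies $\alpha^i$ appearing in the statement.

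Next I would show that the copies \emph{almost} solve the $N$-player FBSDE. Plugging $(\ov{\bX},\ov{\bY},\ov{\bZ},\ov{\bZ}^0)$ into the $N$-player equations, the only discrepancy is that the coefficients are evaluated at the true conditional law $m_t=\sL(X_t\mid\sF^0_t)$ rather than at the empirical measure $m^N_{\ov{\bX}_t}$; hence the forward and backward residuals are governed by the differences $D_\bullet L(\,\cdot\,,m^N_{\ov{\bX}_t})-D_\bullet L(\,\cdot\,,m_t)$ and $D_xG(\,\cdot\,,m^N_{\ov{\bX}_T})-D_xG(\,\cdot\,,m_T)$. This is precisely where Assumption~\ref{assump:cstaffine} and the result of \cite{CST} enter: the two bounded Lions derivatives let me expand these differences to second order and exploit the conditional i.i.d.\ structure of the copies to obtain a conditional law of large numbers with a \emph{dimension-free} rate, yielding $\sum_{i=1}^N\E\!\int_0^T|\mathcal E^i_t|^2\,dt\le C$ for the summed residuals (equivalently $C/N$ per particle).

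The heart of the argument is the dimension-free stability of the $N$-player FBSDE, which I would obtain by differentiating the Lyapunov functional \eqref{lyap}, namely $\Phi_t=\sum_i(X^{N,i}_t-X^i_t)\cdot(Y^{N,i}_t-Y^i_t)$, in the spirit of \cite{hupengmonotone,pengwumonotone}. Writing $\Delta X^i=X^{N,i}-X^i$ and so on, It\^o's formula and the FBSDE structure turn the drift of $\E[\Phi_t]$ into exactly the summed Hamiltonian monotonicity quantity \eqref{hammonotone2intro} (plus residual cross-terms), while $\Delta X^i_0=0$ and the displacement monotonicity of $G$ render the terminal contribution nonnegative up to a residual error. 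In the affine case the envelope theorem collapses \eqref{hammonotone2intro}: since $D_yH,D_zH,D_{z^0}H$ reproduce $b,\sigma,\sigma^0$, the $x$-linear terms cancel against $D_xH$, and the first-order condition $D_aL(x,\hat a,m)+B_1^\top y+\Sigma_1\!\cdot\! z+\Sigma_1^0\!\cdot\! z^0=0$ (with the products denoting the appropriate contractions) converts the remaining $B_1,\Sigma_1,\Sigma_1^0$ contributions into $-(D_aL-\overline{D_aL})\cdot\Delta\alpha$. Thus \eqref{affinestruct} gives the key bound, that the integrand is at most $-C_L\sum_i|\Delta\alpha^i|^2$ up to residuals, and we reach $C_L\,\E\!\int_0^T\sum_i|\Delta\alpha^i_t|^2\,dt\le(\text{residual errors})$.

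Finally I would close the estimate with the a priori FBSDE bounds collected in subsection~\ref{subsec:technical}: the forward SDE controls $\E\!\int\sum_i|\Delta X^i|^2$ by $\E\!\int\sum_i|\Delta\alpha^i|^2$ plus residuals, and the backward equation controls $\E\sup\sum_i|\Delta Y^i|^2$ and $\E\!\int\sum_i(|\Delta Z^i|^2+|\Delta Z^{0,i}|^2)$ by the same data. Splitting the residual cross-terms with Young's inequality and choosing the splitting parameter small relative to $C_L$, one absorbs all the $\Delta X,\Delta Y,\Delta Z,\Delta Z^0$ contributions into the coercive term and the $O(1)$ summed-residual bound from the second step, obtaining $\sum_{i=1}^N\E\!\int_0^T|\Delta\alpha^i_t|^2\,dt\le C$; exchangeability then distributes this as $\E\!\int_0^T|\Delta\alpha^i_t|^2\,dt\le C/N$, and a last application of the forward estimate promotes it to $\E[\sup_t|\Delta X^i_t|^2]\le C/N$. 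I expect the main obstacle to be exactly this last absorption under \emph{controlled} volatility: because $\sigma,\sigma^0$ depend on $a$, the martingale-integrand differences $\Delta Z^i,\Delta Z^{0,i}$ are genuinely coupled to $\Delta\alpha^i$, so the absorption must be arranged so that the coercivity coming from $C_L$ in the \emph{control} variable (rather than coercivity in the state, as in the general case \eqref{hammonotoneintro}) still dominates, all while keeping every constant independent of both $N$ and the state dimension $n$.
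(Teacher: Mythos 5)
Your proposal follows essentially the same route as the paper's proof (Proposition \ref{prop:mainestaffine} combined with the error estimates from the proof of Theorem \ref{thm:pocdriftcontrol}): maximum principle, conditionally independent copies, It\^o applied to $\sum_i \Delta X^i_t\cdot\Delta Y^i_t$, the first-order condition in $a$ to convert the $B_1,\Sigma_1,\Sigma_1^0$ cross-terms into differences of $D_aL$, the projected monotonicity \eqref{affinestruct} together with displacement monotonicity of $G$, and finally Young's inequality, the \cite{CST}-based residual bounds and exchangeability. The only (harmless) deviation is that you invoke backward stability estimates for $\Delta Y^i,\Delta Z^i,\Delta Z^{0,i}$; in the affine case these are not needed, since the first-order condition eliminates $\Delta Z^i,\Delta Z^{0,i}$ from the final inequality entirely, leaving only $\Delta X^i$, which the forward equation controls directly by $\Delta\alpha^i$.
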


\subsubsection{Main results for the general case}

We now turn to the general case, and to state results we need to introduce the Hamiltonian
\begin{align} 
H   : \R^n \times \R^n \times\R^{n\times d} \times \spt \to \R
\end{align}
 defined by 
\begin{align} \label{def:hamiltonian}
  H(x,y,z,z^0,m) := \inf_{a \in \R^k} \bigg\{b(x,a,m) \cdot y + \sigma(x,a,m) \cdot z + \sigma^0(x,a,m) \cdot z^0 + L(x,a,m) \bigg\}. 
\end{align}
We recall that we use ``$\cdot$" to denote the Frobenious inner product between matrices in addition to the usual inner product on vectors, so that e.g. $\sigma(x,a,m) \cdot z := \tr(\sigma_t^\top(x,a,m) z)$, with $\sigma^\top$ denoting the transpose of the matrix $\sigma$. 
We now present and discuss the necessary assumptions.

\begin{assumption}[Basic regularity requirements] \label{assump:controlledvolreg}
The functions $b$, $\sigma$, $\sigma^0$ are $C^1$ with bounded derivatives. The Lagrangian $L$ is $C^1$ and the derivatives $D_x L$, $D_a L$, $D_m L$ are each Lipschitz continuous in all arguments.
The Hamiltonian $H$ is $C^1$, and its derivatives
\begin{align*}
    D_x H, \,\, D_y H, \,\, D_z H, \,\, D_{z^0} H, \,\, D_m H
\end{align*}
are each Lipschitz continuous 
 $\R^n \times \R^n \times \R^{n \times d} \times \spt$.  
The terminal condition $G$ is $C^1$, and its derivatives $D_x G$ and $D_mG$ are Lipschitz in all arguments.
\end{assumption}

\begin{remark}
The assumption that $b$, $\sigma$, $\sigma^0$ are Lipschitz ensures that the dynamics of the state process (see \eqref{xdynamics}) are well-posed for each choice of $\alpha$, and the smoothness assumptions on $b$, $\sigma$, $\sigma^0$, and $L$ ensure that the maximum principle can be applied (see Lemmas \ref{eq:nplayerfbsde} and \ref{lem:mpmfggen}). 


The Lipschitz continuity of various derivatives of $H$ and $G$ are technical conditions required in the proof of our main convergence result, Theorem \ref{thm:pocvolcontrol}. Lipschitz continuity of $D_m H$ and $D_m G$ is required only for the uniqueness result (Lemma \ref{lem:uniqueness} below), which is used in the proof of Theorem \ref{thm:pocvolcontrol} only to guarantee a certain symmetry of the $N$-player Nash equilibria (see Lemma \ref{lem:exchangeable}) - thus if symmetry is added as an assumption, Theorem \ref{thm:pocdriftcontrol} can be recovered without the assumption that $D_m H$ and $D_m G$ are Lipschitz.
\end{remark}

Our next assumption is the main structural condition required on the game.

\begin{assumption}[Displacement monotonicity] \label{assump:controlledvolstruct}
There is a real number $C_H > 0$ such that
\begin{align} \label{hamiltonianmonotone}
  \E\bigg[ &- \big(D_x H (X,Y,Z,Z^0,\sL(X)) - D_x H(\ov{X},\ov{Y},\ov{Z},\ov{Z}^0, \sL(\ov{X})) \big) \cdot (X - \ov{X})  \nonumber 
  \\  &+ \big(D_y H(X,Y,Z,Z^0,\sL(X)) - D_y H(\ov{X},\ov{Y},\ov{Z},\ov{Z}^0, \sL(\ov{X})) \big) \cdot (Y - \ov{Y}) \nonumber \\ &+ \big(D_z H (X,Y,Z,Z^0,\sL(X)) - D_z H(\ov{X},\ov{Y},\ov{Z},\ov{Z}^0, \sL(\ov{X})) \big) \cdot (Z - \ov{Z}) \nonumber 
  \\ & + \big(D_{z^0} H(X,Y,Z,Z^0,\sL(X)) - D_{z^0} H(\ov{X},\ov{Y},\ov{Z},\ov{Z}^0, \sL(\ov{X})) \big) \cdot (Z^0 - \ov{Z}^0) \bigg] \nonumber \\
& \quad \quad \leq - C_H \E\big[|X - \ov{X}|^2\big]
\end{align}
holds for any square-integrable random variables $X,\ov{X}, Y,\ov{Y}, Z, \ov{Z}, Z^0, \ov{Z}^0$ with $X, \ov{X}, Y, \ov{Y}$ taking values $\R^n$ and $Z, \ov{Z}, Z^0, \ov{Z}^0$ taking values in $\R^{n \times d}$. In addition, the terminal cost $G$ is strictly displacement monotone, i.e. $C_G$-displacement monotone for some $C_G > 0$. Finally, for each $(x,y,z,z^0, m)$ there is a unique $\hat{a}(x,y,z,z^0,m) \in \R^k$ which minimizes 
\begin{align} \label{hconvex}
  a \mapsto b(x,a,m) \cdot y + \sigma(x,a,m)\cdot z  + \sigma^0(x,a,m)\cdot z^0 + L(x,a,m). 
\end{align}
\end{assumption}

There is a final technical condition we will need to make on the regularity of the maps $F$ and $G$. It is required only so that we can apply a result of \cite{CST} in order to bound quantities like 
\begin{align*}
D_x G(x,m) - \E\Big[D_x G\big(x,\frac{1}{N} \sum_{i = 1}^N \delta_{\xi^i}\big) \Big]
\end{align*}
when $(\xi^i)_{i = 1,\dots,N}$ are i.i.d. with law $m$ (see Lemma \ref{lem:errorterms}).

\begin{assumption}[Smoothness condition in the general case] \label{assump:cst}
For each fixed $(x,y,z, z^0)$, the maps $m \mapsto D_x H(x,y,z,z^0,m)$, $m \mapsto D_y H(x,y,z,z^0,m)$, $m \mapsto D_z H(x,y,z,z^0,m)$, and $m \mapsto D_{z^0} H(x,y,z,z^0,m)$ admit two Lions derivatives which are bounded and Lipschitz continuous in $m$, uniformly in $(x,y,z,z^0)$. That is, the derivatives
\begin{align*}
D_m D_x H(x,y,z,z^0,m,p) &= D_m\big[D_x H(x,y,z,z^0,\cdot)](m,p), \\
D_{mm} D_x H(x,y,z,z^0,m,p,q) &= D_{mm} \big[D_x H(x,y,z,z^0,\cdot)\big](m,p,q)
\end{align*}
exist and the maps $(m,p) \mapsto D_m D_x H(x,y,z,z^0,m,p)$, $(m,p,q) \mapsto D_{mm} D_x H(x,y,z,z^0,m,p,q)$ are bounded and Lipschitz in $m$ uniformly in $(x,y,z,z^0)$, and similarly, the functions $D_y H$, $D_z H$, and $D_{z^0} H$ are bounded and Lipschitz continuous, uniformly in $(x,y,z,z^0)$.
Also, for each fixed $x$, the map $m \mapsto D_x G(x,m)$ admits two bounded derivatives 
\begin{align*}
D_m D_x G(x,m,y) = D_m\big[D_x G(x,\cdot)](m,y), \quad D_{mm} D_x G(x,m,y,z) = D_{mm} \big[D_x G(x,\cdot)\big](m,y,z)
\end{align*}
which are bounded and Lipschitz continuous in $m$, uniformly in $x$. 
\end{assumption}

\begin{remark}
As mentioned above, the existence of linear derivatives for $D_xH$ and $D_x G$ satisfying certain linear growth conditions is used only to apply \cite[Theorem 2.11]{CST} in the proof of Theorems \ref{thm:pocdriftcontrol} and \ref{thm:pocvolcontrol}. If the MFE which is supposed to exist in the statement of Theorem \ref{thm:pocdriftcontrol} has some higher integrability, e.g. $\sup_{0 \leq t \leq T} \E[M_q(m_t)]<\infty$  for some $q > 2$, then we can remove entirely the conditions concerning the Lions' derivatives of $D_x F$ and $D_x G$ and still obtain a quantitative convergence rate by relying instead on the results of \cite{fournier2015rate}, see Corollary \ref{cor:q.moment}. But while we are able to prove the existence in Theorem  \ref{thm:existence} of MFE, it seems very subtle to show that the corresponding state has higher moments when the initial condition $m_0$ does. We use the additional smoothness in Assumption \ref{assump:cst} to get around this issue.
\end{remark}
We are now prepared to state our main results in the general case. The first is concerned with well-posedness of the mean field game. 

\begin{theorem} \label{thm:existence}
Let Assumption \ref{assump:controlledvolreg} and \ref{assump:controlledvolstruct} hold. Suppose in addition that
\begin{align*}
(x,a) \mapsto b(x,a,m) \cdot y + \sigma(x,a,m) \cdot z + \sigma^0(x,a,m) \cdot z^0 + L(x,a,m)
\end{align*}
is convex for each fixed $m$. Then there exists a unique mean field equilibirum.
\end{theorem}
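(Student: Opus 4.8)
The plan is to characterize mean field equilibria through the stochastic maximum principle, reducing their existence to the solvability of a conditional McKean-Vlasov FBSDE, and then to solve that FBSDE by the method of continuation of \cite{pengwumonotone}. First I would invoke the relevant Pontryagin principle (Lemma \ref{lem:mpmfggen}): for a fixed measure flow $m$, the assumed convexity of $(x,a) \mapsto b(x,a,m)\cdot y + \sigma(x,a,m)\cdot z + \sigma^0(x,a,m)\cdot z^0 + L(x,a,m)$, together with the convexity of $x \mapsto G(x,m)$ for each fixed $m$ (which is guaranteed by the strict displacement monotonicity of $G$, see \cite{Mes-Mou21}), makes the maximum principle both necessary and sufficient. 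Since Assumption \ref{assump:controlledvolstruct} provides a unique minimizer $\hat a(x,y,z,z^0,m)$ of \eqref{hconvex}, the optimal control takes the feedback form $\alpha_t = \hat a(X_t, Y_t, Z_t, Z_t^0, m_t)$, and imposing the fixed-point constraint $m_t = \sL(X_t \mid \sF^0_t)$ turns the search for an MFE into the search for a solution of the conditional McKean-Vlasov FBSDE
\begin{align*}
dX_t &= D_y H(X_t, Y_t, Z_t, Z_t^0, m_t)\, dt + D_z H(X_t, Y_t, Z_t, Z_t^0, m_t)\, dW_t + D_{z^0} H(X_t, Y_t, Z_t, Z_t^0, m_t)\, dW_t^0, \\
dY_t &= -D_x H(X_t, Y_t, Z_t, Z_t^0, m_t)\, dt + Z_t\, dW_t + Z_t^0\, dW_t^0,
\end{align*}
with $X_0 = \xi$, $Y_T = D_x G(X_T, m_T)$, and $m_t = \sL(X_t \mid \sF^0_t)$. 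Here I have used the envelope identities $D_yH = b$, $D_zH = \sigma$, $D_{z^0}H = \sigma^0$, all evaluated at $\hat a$.

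The heart of the argument is a single a priori estimate. Given two solutions $(X,Y,Z,Z^0)$ and $(\ov{X},\ov{Y},\ov{Z},\ov{Z}^0)$ sharing the same initial condition, I would write $\Delta X = X - \ov{X}$ (and similarly for the other components, with $\ov{\Theta}_t = (\ov{X}_t,\ov{Y}_t,\ov{Z}_t,\ov{Z}_t^0, \ov{m}_t)$ and $\ov{m}_t = \sL(\ov{X}_t \mid \sF_t^0)$) and apply It\^o's formula to $\Delta X_t \cdot \Delta Y_t$. After taking expectations and using $\Delta X_0 = 0$, the martingale parts vanish and the cross-variation combines with the drift to give
\begin{align*}
\E\big[\Delta X_T \cdot \Delta Y_T\big] = \E\int_0^T \Big[ &-\big(D_x H(\Theta_t) - D_x H(\ov{\Theta}_t)\big)\cdot \Delta X_t + \big(D_y H(\Theta_t) - D_y H(\ov{\Theta}_t)\big)\cdot \Delta Y_t \\ &+ \big(D_z H(\Theta_t) - D_z H(\ov{\Theta}_t)\big)\cdot \Delta Z_t + \big(D_{z^0} H(\Theta_t) - D_{z^0} H(\ov{\Theta}_t)\big)\cdot \Delta Z_t^0 \Big] dt,
\end{align*}
where $\Theta_t = (X_t,Y_t,Z_t,Z_t^0, m_t)$. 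The integrand is precisely the quantity appearing in the monotonicity condition \eqref{hamiltonianmonotone}, which---applied conditionally on $\sF_t^0$ and then integrated---bounds it above by $-C_H \E[|\Delta X_t|^2]$. On the other hand, the strict displacement monotonicity of $G$ (again applied conditionally on $\sF_T^0$) gives $\E[\Delta X_T \cdot \Delta Y_T] \geq C_G \E[|\Delta X_T|^2] \geq 0$. Chaining these inequalities forces $\E\int_0^T |\Delta X_t|^2\, dt = 0$, hence $X = \ov{X}$; standard BSDE stability then yields $Y = \ov{Y}$, $Z = \ov{Z}$, $Z^0 = \ov{Z}^0$, which proves uniqueness.

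For existence I would run the method of continuation: interpolate between a decoupled (hence trivially solvable) system at $\lambda = 0$ and the target FBSDE at $\lambda = 1$, keeping the monotone structure along the path and allowing arbitrary square-integrable source terms. Repeating the It\^o computation above with sources present yields the same estimate with a constant independent of $\lambda$, controlling $\E\int_0^T |\Delta X_t|^2 dt$ and $\E[|\Delta X_T|^2]$; combining this with the Lipschitz bounds on the coefficients and standard BSDE estimates upgrades it to a full a priori bound on $(\Delta X, \Delta Y, \Delta Z, \Delta Z^0)$ in terms of the sources. This bound shows that the set of $\lambda \in [0,1]$ for which the interpolated family is solvable is open and closed, and since it contains $0$ it equals $[0,1]$; solvability at $\lambda = 1$ is exactly the desired conditional McKean-Vlasov FBSDE, which is the content of the well-posedness statement proved in the appendix.

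I expect the main obstacle to be handling the conditional (common-noise) structure: the displacement monotonicity of both $H$ (via \eqref{hamiltonianmonotone}) and $G$ must be applied to the conditional laws $\sL(\cdot \mid \sF_t^0)$ rather than to ordinary laws, which requires the regular conditional-law versions supplied by Lemma \ref{lem:conditionallaw} and care in verifying that the interpolated coefficients retain the monotonicity \eqref{hamiltonianmonotone} for every $\lambda$. A secondary difficulty is propagating the estimate on $\Delta X$---which is all that the monotonicity directly controls---to the backward components $(\Delta Y, \Delta Z, \Delta Z^0)$, a step that relies on the non-degeneracy implicit in the unique-minimizer condition of Assumption \ref{assump:controlledvolstruct} together with the Lipschitz regularity imposed in Assumption \ref{assump:controlledvolreg}.
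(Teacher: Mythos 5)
Your proposal is correct and follows essentially the same route as the paper: reduce to the conditional McKean--Vlasov FBSDE \eqref{eq:mkvfbsde} via the maximum principle (Lemma \ref{lem:mpmfggen}), noting that displacement monotonicity already supplies the convexity of $G$ in $x$, and then solve that FBSDE by the Peng--Wu continuation method, the key a priori estimate coming from It\^o's formula applied to $\Delta X_t \cdot \Delta Y_t$ combined with the (conditionally applied) displacement monotonicity of $H$ and $G$ --- this is exactly Theorem \ref{thm.gen-existence} in the appendix. The only small inaccuracy is in your closing remark: transferring the estimate from $\Delta X$ to $(\Delta Y, \Delta Z, \Delta Z^0)$ uses only standard BSDE stability under the Lipschitz bounds of Assumption \ref{assump:controlledvolreg}, not any non-degeneracy coming from the unique-minimizer condition of Assumption \ref{assump:controlledvolstruct}.
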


The proof of this result follows by unique solvability of a (conditional) McKean-Vlasov FBSDE, as is customary in the probabilistic approach to MFG \cite{cardelbook2}.
Observe that the convexity conditions assumed in Theorem \ref{thm:existence} are not needed to guarantee well-posedness of the FBSDE, but rather to ensure that the maximum principle gives a sufficient condition for optimality and hence existence of the MFE.

Next we state our main convergence result.

\begin{theorem} \label{thm:pocvolcontrol}
Let Assumptions \ref{assump:controlledvolreg}, \ref{assump:controlledvolstruct}, and \ref{assump:cst} hold. Suppose that there is a MFE $(m,\alpha)$, and for each $N$ there is a Nash equilibrium $\bm \alpha^N = (\alpha^{N,1},\dots,\alpha^{N,N})$ for the $N$-player game. Let $\bX = (X^{N,1},\dots,X^{N,N})$ be the state process corresponding to $\bm \alpha^N$, and let $X$ be the state process corresponding to $\alpha$. Then there is a constant $C$ independent of $N$ such that for each $N \in \N$ and $i \in \{1,\dots,N\}$, 
\begin{align*}
\E\Big[\sup_{0 \leq t \leq T} |X_t^{N,i} - X_t^i|^2\Big] \leq \frac{C}{N}, 
\end{align*}
where $(X^i)_{i = 1,\dots,N}$ are conditionally independent copies of $X$ (see Definition \ref{def:condindcopies}). Suppose that in addition the optimizer $\widehat{\alpha} = \widehat{\alpha}(x,y,z,z^0,m)$ satisfies
\begin{enumerate}
\item $\hat{\alpha}$ is Lipschitz
\item For each fixed $(x,y,z,z^0)$, the map $m \mapsto \widehat{\alpha}(x,y,z,z^0,m)$ admits derivatives 
\begin{align*}
D_m \widehat{\alpha}(x,y,z,z^0,m,p) = D_m\big[\widehat{\alpha}(x,y,z,z^0, \cdot)](m,p), \\
 D_{mm} \widehat{\alpha}(x,y,z,z^0,m,p,q) = D_{mm}\big[\widehat{\alpha}(x,y,z,z^0, \cdot)](m,p,q)
\end{align*}
which are bounded and Lipschitz continuous, uniformly in $(x,y,z,z^0)$.
\end{enumerate}
Then there is a constant $C$ independent of $N$ such that
\begin{align*}
\E\bigg[\int_0^T |\alpha_t^i - \alpha_t^{N,i}|^2 dt \bigg] \leq \frac{C}{N}, 
\end{align*}
where $(\alpha^i)_{i = 1,\dots,N}$ are conditionally independent copies of $\alpha$.
\end{theorem}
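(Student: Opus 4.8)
The plan is to follow the three-step blueprint of Subsection \ref{subsec:proofstrategy}, reducing the convergence problem to a forward--backward propagation of chaos estimate driven by the monotonicity condition \eqref{hamiltonianmonotone}. First I would invoke the stochastic maximum principle (Lemmas \ref{eq:nplayerfbsde} and \ref{lem:mpmfggen}) to characterize the $N$-player Nash equilibrium $\bm\alpha^N$ as the solution $(\bX^N,\bY^N,\bZ^N,\bZ^{0,N})$ of an $N$-dimensional FBSDE whose coefficients are the derivatives of the Hamiltonian $H$ from \eqref{def:hamiltonian} evaluated at the empirical measure $m^N_{\bX^N_t}$, and to characterize the MFE as the solution of the conditional McKean--Vlasov FBSDE with coefficients evaluated at $m_t=\sL(X_t\mid\sF^0_t)$. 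Invoking uniqueness (Lemma \ref{lem:uniqueness}) I would record the exchangeability of the $N$-player equilibrium (Lemma \ref{lem:exchangeable}), so that it suffices to bound $\E[\sum_i\,\cdot\,]$ and divide by $N$.

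Second, I would produce the conditionally independent copies $(X^i,Y^i,Z^i,Z^{0,i})_i$ of the MFE solution via Definition \ref{def:condindcopies}, built on the same initial data $\xi^i$ and Brownian motions $W^i,W^0$ as the $N$-player system, so that $X_0^{N,i}=X_0^i=\xi^i$. These copies solve the decoupled MFE FBSDE with the \emph{true} conditional law $m_t$; when plugged into the $N$-player FBSDE they leave a residual equal to the difference of the Hamiltonian derivatives evaluated at $m^N_{\ov{\bX}_t}$ (the empirical measure of the copies) versus $m_t$. The content of Lemma \ref{lem:errorterms}, which rests on Assumption \ref{assump:cst} and \cite[Theorem 2.11]{CST}, is that these residuals are \emph{dimension-free}: summed over the $N$ particles their squared $L^2$ norms are $O(1)$ (equivalently $O(1/N)$ per particle), which is the crucial gain over the Wasserstein rate.

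The key step is the Bismut-type identity obtained by applying It\^o's formula to the quantity \eqref{lyap} and integrating on $[0,T]$; write $\Delta X^i:=X^{N,i}-X^i$, $\Delta Y^i:=Y^{N,i}-Y^i$, and similarly $\Delta Z^i,\Delta Z^{0,i}$. The initial term vanishes since $\Delta X_0^i=0$; the terminal term equals $\sum_i\E[\Delta X^i_T\cdot(D_xG(X^{N,i}_T,m^N_{\bX^N_T})-D_xG(X^i_T,m_T))]$ and is bounded below by $C_G\,\E[\sum_i|\Delta X^i_T|^2]$ up to an error, using strict displacement monotonicity of $G$; and the drift plus quadratic-covariation terms reassemble precisely into the left side of the discretized monotonicity inequality \eqref{hammonotone2intro}, hence are dominated by $-C_H\,\E[\int_0^T\sum_i|\Delta X^i_t|^2\,dt]$ up to an error. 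Here each ``error'' is a cross term pairing one of $\Delta X^i,\Delta Y^i,\Delta Z^i,\Delta Z^{0,i}$ against a measure residual from the previous step, plus the Lipschitz contribution of $D_mH$ and $D_mG$ comparing $m^N_{\bX^N_t}$ with $m^N_{\ov{\bX}_t}$. Combining this coercivity with standard a priori FBSDE estimates for the backward components and closing via Young's inequality yields $\E[\sup_t\sum_i|\Delta X^i_t|^2]\le C$, which by exchangeability is the first claimed bound $\E[\sup_t|X^{N,i}_t - X^i_t|^2]\le C/N$; this is the substance of Proposition \ref{prop:mainest}.

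The main obstacle is precisely the closing of this estimate. The monotonicity \eqref{hamiltonianmonotone} supplies coercivity only in $\Delta X$, whereas the error cross terms also involve $\Delta Y,\Delta Z,\Delta Z^0$; one must therefore interleave the monotonicity identity with standard a priori bounds expressing $\E[\int\sum_i(|\Delta Y^i|^2+|\Delta Z^i|^2+|\Delta Z^{0,i}|^2)]$ in terms of $\E[\int\sum_i|\Delta X^i|^2]$ and the residuals, and absorb carefully so that the constants $C_H>0$ and $C_G>0$ dominate. The second, subtler point is that the measure argument of the $N$-player coefficients is $m^N_{\bX^N_t}$, not $m^N_{\ov{\bX}_t}$; the gap is controlled by $\sum_i|\Delta X^i_t|^2$ itself through the Lipschitz continuity of $D_mH$, so it must be re-absorbed into the coercive term, which is where the strictness in Assumption \ref{assump:controlledvolstruct} is essential. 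Finally, for the control estimate I would write $\alpha^{N,i}_t-\alpha^i_t=\hat a(X^{N,i}_t,Y^{N,i}_t,Z^{N,i}_t,Z^{0,N,i}_t,m^N_{\bX^N_t})-\hat a(X^i_t,Y^i_t,Z^i_t,Z^{0,i}_t,m_t)$ and bound it using Lipschitz continuity of $\hat a$ (hypothesis (1)) for the pathwise arguments together with the bounded Lions derivatives (hypothesis (2)) and \cite{CST} for the measure argument, so that $\E[\int_0^T|\alpha^{N,i}_t-\alpha^i_t|^2\,dt]\le C/N$ follows from the bounds already obtained on $(\Delta X,\Delta Y,\Delta Z,\Delta Z^0)$.
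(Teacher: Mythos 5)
Your proposal is correct and follows essentially the same route as the paper: maximum principle to reduce to FBSDEs, conditionally independent copies via the coupling of Definition \ref{def:condindcopies}, It\^o applied to $\sum_i \Delta X_t^i\cdot\Delta Y_t^i$ combined with the projected monotonicity inequality (Lemma \ref{lem:finitedimmonotone}) and strict displacement monotonicity of $G$, the weak error bound of Lemma \ref{lem:errorterms} for the dimension-free residuals, BSDE stability to convert coercivity in $\Delta X$ into control of $(\Delta Y,\Delta Z,\Delta Z^0)$, and exchangeability plus the Lipschitz/Lions-derivative hypotheses on $\hat\alpha$ for the control estimate. This matches Proposition \ref{prop:mainest} and the subsequent error-term estimation in the paper's proof.
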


As mentioned above, the strong regularity conditions of Assumption \ref{assump:cst} can be replaced by higher moments, as explained in the following corollaries of the proof of Theorem \ref{thm:pocvolcontrol}. 

\begin{corollary}
\label{cor:q.moment}
 Let the hypotheses of Theorem \ref{thm:pocvolcontrol} hold, except possibly Assumption \ref{assump:cst}. Suppose that the MFE $(m,\alpha)$ is such that $M_q(m_0) < \infty$ and moreover $\sup_{t\in [0,T]}\E[M_q(m_t)]<\infty$ for some $q>2$. Then there is a constant $C$ independent of $N$ such that for each $N \in \N$ and $i \in \{1,\dots,N\}$, we have
\begin{align*}
\E\Big[\sup_{0 \leq t \leq T} |X_t^{N,i} - X_t^i|^2\Big]\le  Cr_{N,q}
\end{align*}
where $X^{N,i}$, $X^i$ are as in the statement of Theorem \ref{thm:pocvolcontrol}, and $r_{N,q}$ is given by 
 \begin{align} \label{rnqdef}
  r_{N,q} := 
  \begin{cases}
    N^{-1/2} + N^{-(q-2)/q} & n < 4 \text{ and } q \neq 4 \\
    N^{-1/2} \log(1 + N) + N^{-(q-2)/q} & n = 4 \text{ and } q \neq 4 \\
    N^{-2/n} + N^{-(q-2)/2} & n > 4.
  \end{cases}
\end{align}
If in addition the optimizer $\widehat{\alpha} = \widehat{\alpha}(x,y,z,z^0,m)$ of the Hamiltonian is Lipschitz, then there is a constant $C$ independent of $N$ such that
\begin{align*}
\E\bigg[\int_0^T |\alpha_t^i - \alpha_t^{N,i}|^2 dt \bigg] \leq C r_{N,q},  
\end{align*}
where $\alpha^i$ and $\alpha^{N,i}$ are as in the statement of Theorem \ref{thm:pocvolcontrol}.
\end{corollary}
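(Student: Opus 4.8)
The plan is to prove Corollary~\ref{cor:q.moment} by re-examining the proof of Theorem~\ref{thm:pocvolcontrol} and isolating the single step where Assumption~\ref{assump:cst} was invoked. As indicated in the remarks preceding the statement, that assumption is used only to apply \cite[Theorem 2.11]{CST} in order to control empirical-measure error terms of the form $D_x G(x,m) - \E[D_x G(x, \frac1N \sum_i \delta_{\xi^i})]$ (and the analogous terms involving $D_x H$) appearing in Lemma~\ref{lem:errorterms}. The strategy, then, is to show that under the higher-moment hypothesis $\sup_{t} \E[M_q(m_t)] < \infty$ with $q > 2$, these same error terms can be bounded using the quantitative empirical-measure convergence rates of \cite[Theorem 1]{fournier2015rate} instead, at the cost of replacing the rate $1/N$ by the weaker rate $r_{N,q}$ defined in \eqref{rnqdef}.

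First I would retrace the proof of Theorem~\ref{thm:pocvolcontrol} (which rests on Proposition~\ref{prop:mainest} and the forward-backward propagation of chaos argument centered on the Lyapunov quantity \eqref{lyap}). The key output of that argument is a stability estimate of the shape $\E[\sup_t |X_t^{N,i} - X_t^i|^2] \leq C \cdot (\text{error terms})$, where the error terms measure how far the conditionally-i.i.d.\ copies $(\ov{\bX}^N, \ov{\bY}^N, \ov{\bZ}^N)$ are from exactly solving the $N$-player FBSDE. These error terms are precisely the differences between the true coefficients, evaluated at the empirical measure $m^N_{\ov{\bX}_t}$, and the coefficients evaluated at the conditional law $m_t$. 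Because the coefficients $D_x H$, $D_x G$, etc.\ are assumed Lipschitz in $m$ in the Wasserstein metric (Assumption~\ref{assump:controlledvolreg}), these error terms are dominated by $\E[\cW_2^2(m^N_{\ov{\bX}_t}, m_t)]$, where conditionally on $\sF^0_t$ the points of $m^N_{\ov{\bX}_t}$ are i.i.d.\ samples from $m_t$.

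The heart of the argument is then the empirical-measure estimate. I would apply \cite[Theorem 1]{fournier2015rate} conditionally on $\sF^0_t$: given $k$ i.i.d.\ samples in $\R^n$ from a law with finite $q$-th moment, the expected squared $2$-Wasserstein distance to the true law is bounded by $C \, M_q^{2/q} \cdot r_{N,q}$ with $r_{N,q}$ exactly as in \eqref{rnqdef} (the three cases $n<4$, $n=4$, $n>4$ are the standard dimension-dependent regimes from that reference). Taking expectations over $\sF^0_t$ and using $\sup_t \E[M_q(m_t)] < \infty$ produces a uniform-in-$t$ bound $\E[\cW_2^2(m^N_{\ov{\bX}_t}, m_t)] \leq C \, r_{N,q}$. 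Substituting this into the stability estimate yields the claimed bound for the states; the control estimate then follows exactly as in Theorem~\ref{thm:pocvolcontrol}, since Lipschitz continuity of $\widehat{\alpha}$ transfers the state rate to the control via $|\alpha^i_t - \alpha^{N,i}_t| \lesssim |X^i_t - X^{N,i}_t| + |Y^i_t - Y^{N,i}_t| + |Z^i_t - Z^{N,i}_t| + (\text{empirical error})$.

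The main obstacle I anticipate is bookkeeping rather than conceptual: I must verify that in the original proof of Theorem~\ref{thm:pocvolcontrol}, \emph{every} appearance of Assumption~\ref{assump:cst} (via \cite{CST}) can be cleanly replaced by the Wasserstein-Lipschitz bound plus \cite{fournier2015rate}, and that no other part of the argument secretly relied on the boundedness of the second Lions derivatives for a different purpose (for instance, to obtain the sharper rate $1/N$ rather than $r_{N,q}$, or to control a higher-order Taylor remainder). A subtle point is that \cite{CST} gives the dimension-free rate $1/N$ precisely because it exploits the smoothness of $m \mapsto D_x H, D_x G$ beyond mere Lipschitz continuity; without it, one is forced back to the generic Wasserstein rate $r_{N,q}$, which degrades with dimension for $n > 4$. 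I would also need to confirm that the higher moment $\sup_t \E[M_q(m_t)] < \infty$ indeed propagates to the relevant adjoint processes $Y, Z$ (or that only the $X$-marginals enter the empirical-measure terms), so that the conditional application of \cite{fournier2015rate} is justified with the stated moment control.
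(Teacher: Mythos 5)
Your proposal is correct and follows essentially the same route as the paper: the paper's proof likewise starts from the stability estimate of Proposition \ref{prop:mainest}, uses Lipschitz continuity of the derivatives of $H$ and $G$ in the Wasserstein metric to reduce every error term to $\E[\cW_2^2(\ov{m}_t^N,\ov{m}_t)]$, and then applies \cite[Theorem 1]{fournier2015rate} conditionally on $\sF^0$ together with $\sup_t\E[M_q(m_t)]<\infty$ to obtain the rate $r_{N,q}$, with the control estimate following from the Lipschitz property of $\widehat{\alpha}$ exactly as in Proposition \ref{prop:mainest}. The only detail you omit is the edge case $q=4$ (excluded from the definition of $r_{N,q}$), which the paper handles by passing to a smaller exponent $q'\in(2,4)$; this is a minor point.
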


\begin{remark}
In order to apply Corollary \eqref{cor:q.moment}, one must verify that the MFE has ``higher moments" in an appropriate sense. One would expect that if $M_q(m_0) < \infty$, then $\sup_{t \in [0,T]} \E[M_q(m_t)] < \infty$, but we are unable to verify this in general. However, in the case of uncontrolled volatility, it is typically possible to verify (e.g. via regularity estimates for the MFG system) that if the data is sufficiently regular with bounded second derivatives, then any MFE $(m,\alpha)$ satisfies $|\alpha| \leq C(1 + |X|)$, with $X$ being the corresponding state process, from which it follows that $\sup_{t \in [0,T]} \E[M_q(m_t)] < C(1 + M_q(m_t))$. Thus in the case of uncontrolled volatility, it is straightforward to find conditions under which Corollary \ref{cor:q.moment} can be applied. The subtlety in the case of uncontrolled volatility is that while the method of continuation shows that the McKean-Vlasov FBSDE \eqref{eq:mkvfbsde} is well-posed in an $L^2$-sense, the same techniques do not lead to an $L^q$ theory. In fact, well-posedness in $L^q$ for fully coupled FBSDEs is known to be challenging even in the classical (non-McKean-Vlasov) setting, see e.g. \cite{XIE2020123642} and the references therein.
\end{remark}

\section{Proofs of the main convergence results}
\label{sec:proofs}
In this section we present the proofs of Theorems \ref{thm:pocdriftcontrol}, \ref{thm:pocaffine} and \ref{thm:pocvolcontrol}. Some of the results in this section will be stated simultaneously for the three cases of interest discussed in the previous section, and so it will be useful to state the following Assumption.
\begin{assumption}[Regularity + structural conditions] \label{assump:mixed}
Either
\begin{itemize}
    \item Assumptions \ref{assump:driftcontrolreg} and \ref{assump:driftcontrolstruct} hold, or
    \item Assumptions \ref{assump:affinereg} and \ref{assump:affinestruct} hold, or 
    \item Assumptions \ref{assump:controlledvolreg} and \ref{assump:controlledvolstruct} hold.
\end{itemize}
\end{assumption}

We note that under our Assumptions \ref{assump:driftcontrolstruct}, \ref{assump:affinestruct}, or \ref{assump:controlledvolstruct}, there is for each $(y,z,z^0,m)$ a unique minimizer $\hat{\alpha}(x,y,z,z^0,m)$ of 
\begin{align} \label{alphadef}
a \mapsto b(x,a,m) \cdot y + \sigma(x,a,m) \cdot z + \sigma^0(x,a,m) \cdot z^0 + L(x,a,m), 
\end{align}
and the map $\hat{\alpha}$ is continous. Indeed, this is included in Assumption \ref{assump:controlledvolstruct}, while it can easily be proved under either Assumption \ref{assump:driftcontrolstruct} or Assumption \ref{assump:affinestruct} (see e.g. Lemma 2.1 of \cite{cardelsicon}). Thus in the remainder of the paper, we denote by $\hat{\alpha}$ the unique minimizer of \eqref{alphadef} whenever Assumptions \ref{assump:driftcontrolstruct}, \ref{assump:affinestruct}, or \ref{assump:controlledvolstruct} are in force.

\subsection{The stochastic maximum principle}
\label{subsec:smp}

We start by recalling that
Nash equilibria for the $N$-player game are characterized by the FBSDE
\begin{align} \label{eq:nplayerfbsde}
  \begin{cases} \vspace{.1cm}
    dX_t^{N,i} = D_y H(X_t^{N,i}, Y_t^{N,i}, Z_t^{N,i,i}, Z_t^{N,i,0}, m_{\bX_t^N}^N) dt + D_z H(X_t^{N,i}, Y_t^{N,i}, Z_t^{N,i,i}, Z_t^{N,i,0}, m_{\bX_t^N}^N) dW_t^i   \\ \vspace{.1cm}
  \hspace{2cm} + D_{z^0} H(X_t^{N,i}, Y_t^{N,i}, Z_t^{N,i,i}, Z_t^{N,i,0}, m_{\bX_t^N}^N) dW_t^0, \\ \vspace{.1cm}
dY_t^{N,i} = -  \Big(D_x H(X_t^{N,i}, Y_t^{N,i}, Z_t^{N,i,i}, Z_t^{N,i,0}, m_{\bX_t^N}^N) + \frac{1}{N} D_m H(X_t^{N,i}, Y_t^{N,i}, Z_t^{N,i,i}, Z_t^{N,i,0}, m_{\bX_t^N}^N) \Big)  dt  \\
\hspace{2cm} + \sum_{j = 1}^N Z_t^{N,i,j}  dW_t^j + Z_t^{N,i,0}  dW_t^0 \vspace{.1cm}  \\
X_0^i = \xi^i, \quad Y_T^i = D_x G(X_T^{N,i}, m_{\bX_T^N}^N) + \frac{1}{N} D_m G(X_T^{N,i}, m_{\bX_T^N}^N, X_T^{N,i}).
\end{cases} 
\end{align}
By a solution to \eqref{eq:nplayerfbsde}, we mean a tuple
\begin{align*}
(\bX^N, \bY^N, \bZ^N) = \big( (X^{N,1},\dots,X^{N,N}), (Y^{N,1},\dots,Y^{N,N}), (\bZ^{N,1},\dots,\bZ^{N,N})\big)
\end{align*} 
of $\bbF^N$-progressive processes such that 
\begin{enumerate}
\item $X^{N,i}$ and $Y^{N,i}$ are continuous and adapted $\R^n$-valued processes for each $i = 1,\dots,N$,
\item $\bZ^{N,i} = (Z^{N,i,0},Z^{N,i,1},\dots,Z^{N,i,N})$, with each $Z^{N,i,j}$ for $i = 1,\dots,N$, $j = 0,\dots,N$ being a $\bbF^N$-progressive $\R^{n \times d}$-valued process
\item $(\bX^N,\bY^N,\bZ^N)$ satisfy the integrability condition 
\begin{align} \label{intconditions2}
\E\bigg[\sup_{0 \leq t \leq T} \big( |\bX_t^N|^2 + |\bY_t^N|^2 \big) + \int_0^T |\bZ_t^N|^2 dt \bigg] < \infty
\end{align}
as well as (the integral form of) the equation \eqref{eq:nplayerfbsde}. 
\end{enumerate}

In particular, we have the following Lemma, which follows from \cite[Theorem 2.15]{cardelbook1}. 
\begin{lemma}
\label{lem:N.player.smp}
  Suppose that Assumption \ref{assump:mixed} holds. 
  Let ${\bm \alpha}^N = (\alpha^{N,1},\dots,\alpha^{N,N})$ be a Nash equilibrium for the $N$-player game. Then for each $i=1,\dots,N$, we have
\begin{align*}
  \alpha_t^i = \widehat{\alpha}(X_t^{N,i}, Y_t^{N,i}, Z_t^{N,i,i}, Z_t^{N,i,0}, m_{\bX_t^N}^N), \,\, dt \times d\bP \,\, a.e.
\end{align*}
for some solution 
\begin{align*}
(\bX^N, \bY^N, \bZ^N) = \big( (X^{N,1},\dots,X^{N,N}), (Y^{N,1},\dots,Y^{N,N}), (\bZ^{N,1},\dots,\bZ^{N,N}) \big)
\end{align*}
to \eqref{eq:nplayerfbsde}.
\end{lemma}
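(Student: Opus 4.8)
The plan is to obtain \eqref{eq:nplayerfbsde} as the first-order necessary condition coming from the Pontryagin stochastic maximum principle applied separately to each player's optimization problem, exactly along the lines of \cite[Theorem 2.15]{cardelbook1}. The starting point is that, by definition of a Nash equilibrium, for each fixed $i$ the control $\alpha^{N,i}$ minimizes $\beta \mapsto J^{N,i}(\bm\alpha^{N,-i},\beta)$ over $\beta \in \sA_N$ with the other players' controls frozen. For each $i$ we are thus reduced to a single (non-mean-field) stochastic control problem whose controlled state is really the full vector $\bX^N$, since the empirical measure $m^N_{\bX_t^N}$ couples all coordinates, and whose cost is $J^{N,i}$. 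Under Assumption \ref{assump:mixed} the coefficients $b,\sigma,\sigma^0,L,G$ are $C^1$ with Lipschitz derivatives and admissible controls are square integrable, so the regularity and integrability hypotheses needed for the maximum principle are in force; note that we need only the necessary direction, so no convexity of the data is required at this stage.

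First I would introduce, for each $i$, the adjoint process $Y^{N,i}$ associated with player $i$'s own state coordinate $X^{N,i}$, together with the martingale integrands $Z^{N,i,j}$ for $j=0,1,\dots,N$. The driver and terminal condition of the adjoint backward equation are obtained by differentiating player $i$'s running and terminal costs and the state dynamics with respect to the $i$-th coordinate. The only nonstandard ingredient is the differentiation of the empirical measure: for any sufficiently smooth $U$ one has $\partial_{x^i}\big[U(m^N_{\bx})\big] = \tfrac1N D_m U(m^N_{\bx},x^i)$, and this is precisely the mechanism producing the term $\tfrac1N D_m H$ in the drift of $Y^{N,i}$ and the correction $\tfrac1N D_m G(X_T^{N,i},m^N_{\bX_T^N},X_T^{N,i})$ in its terminal value.

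Next I would record the first-order condition. Because $\alpha^i$ enters only the drift and volatility of the $i$-th state and the running cost $L$, minimizing the full Hamiltonian over the control variable reduces to minimizing the map \eqref{alphadef}; the unique-minimizer property noted just before the statement (valid under every branch of Assumption \ref{assump:mixed}) then identifies the equilibrium control as $\alpha^{N,i}_t = \widehat{\alpha}(X_t^{N,i},Y_t^{N,i},Z_t^{N,i,i},Z_t^{N,i,0},m^N_{\bX_t^N})$, $dt\times d\bP$-a.e. Feeding this minimizer back in and invoking the envelope relations $D_yH=b$, $D_zH=\sigma$, $D_{z^0}H=\sigma^0$, $D_xH=D_x[\cdots]$ and $D_mH=D_m[\cdots]$ evaluated at $\widehat\alpha$ lets me rewrite both the forward dynamics (as \eqref{xdynamics} with $\alpha^i=\widehat\alpha$) and the backward driver through the reduced Hamiltonian $H$ of \eqref{def:hamiltonian} rather than through $b,\sigma,\sigma^0,L$ separately, yielding exactly the system \eqref{eq:nplayerfbsde} together with the integrability \eqref{intconditions2}.

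The step I expect to be most delicate is the careful bookkeeping of the mean-field coupling. Since $m^N_{\bX^N}$ depends on every coordinate, a perturbation of $\alpha^i$ perturbs the whole state vector, so one must check that the adjoint data assemble correctly into the single process $Y^{N,i}$ with the $\tfrac1N$-weighted measure-derivative corrections, and that the envelope identifications are applied consistently at the (a.e.\ unique) minimizer $\widehat\alpha$. Verifying that the resulting triple $(\bX^N,\bY^N,\bZ^N)$ enjoys the progressive measurability and integrability \eqref{intconditions2} demanded of a solution, and matching the diagonal volatility components $Z^{N,i,i},Z^{N,i,0}$ that appear in the control formula with those appearing in the dynamics, are the remaining care-demanding but essentially routine points.
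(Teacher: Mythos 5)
Your proposal is correct and follows essentially the same route as the paper: the paper gives no written proof of this lemma, instead deducing it directly from the necessary Pontryagin maximum principle of \cite[Theorem 2.15]{cardelbook1}, and your sketch (freeze the opponents, apply the necessary first-order condition to player $i$'s control problem, differentiate the empirical measure to produce the $\tfrac1N D_m$ corrections, and rewrite the system through $\widehat{\alpha}$ and the envelope identities $D_yH=b$, $D_zH=\sigma$, $D_{z^0}H=\sigma^0$) is precisely the argument underlying that citation. You also correctly identify the only genuinely delicate point, namely the bookkeeping of how a perturbation of $\alpha^i$ propagates through $m^N_{\bX^N}$ into the adjoint data, which is exactly what the cited theorem is invoked to handle.
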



\begin{remark} \label{rmk:fbsdedriftcontrol}
Under Assumptions \ref{assump:driftcontrolreg} and \ref{assump:driftcontrolstruct} (i.e. in the case of constant volatility), Lemma \ref{lem:N.player.smp} shows that any Nash equilibrium $\bm \alpha = (\alpha^1,\dots,\alpha^N)$ for the $N$-player game must satisfy 
\begin{align*}
    \alpha_t^{N,i} = D_y H_0(X_t^{N,i}, Y_t^{N,i})
\end{align*}
for some tuple $(\bX^N, \bY^N, \bZ^N, \bZ^{N,0})$ satisfying
\begin{align} \label{eq:nplayerfbsdedrift}
  \begin{cases} \vspace{.1cm}
    dX_t^{N,i} = \alpha_t^{N,i} dt + \Sigma dW_t^i + \Sigma^0 dW_t^0   \\ \vspace{.1cm}
dY_t^{N,i} = -  \Big(D_x L_0(X_t^{N,i}, \alpha_t^{N,i}) + D_x F(X_t^{N,i}, m_{\bX_t^N}^N) \\
\qquad + \frac{1}{N} D_m F(X_t^{N,i}, m_{\bX_t^N}^N, X_t^{N,i}) \Big)  dt  
+ \sum_{j = 1}^N Z_t^{N,i,j}  dW_t^j + Z_t^{N,i,0}  dW_t^0 \vspace{.1cm}  \\
X_0^i = \xi^i, \quad Y_T^i = D_x G(X_t^{N,i} m_{\bX_T^N}^N) + \frac{1}{N} D_m G(X_T^{N,i}, m_{\bX_T^N}^N, X_T^{N,i})
\end{cases} 
\end{align}
where $H_0$ is the reduced Hamiltonian defined by \eqref{def:reducedham}. 
\end{remark}

\begin{remark}
\label{rmk:fbsdeaffine}
Under Assumptions \ref{assump:affinereg} and \ref{affinestruct} (i.e. in the case of affine dynamics), Lemma \ref{lem:N.player.smp} shows that any Nash equilibrium $\bm \alpha^N = (\alpha^{N,1},\dots,\alpha^{N,N})$ for the $N$-player game must take the form 
\begin{align*}
    \alpha_t^{N,i} = \widehat{\alpha}(X_t^{N,i}, Y_t^{N,i}, Z_t^{N,i},Z_t^{N,i,0}, m_{\bX_t}^N)
\end{align*}
for some tuple $(\bX^N,\bY^N, \bZ^N, \bZ^{N,0})$ satisfying
\begin{align} \label{eq:nplayerfbsdeaffine}
  \begin{cases} \vspace{.1cm}
    dX_t^{N,i} = \Big(B_0 + B_1 \alpha_t^{N,i} + B_2 X_t^{N,i} \Big) dt + \Big( \Sigma_0 + \Sigma_1 \alpha_t^{N,i} + \Sigma_2 X_t^{N,i}\Big) dW_t^i \\ \vspace{.1cm} \qquad \qquad + \big(\Sigma^0_0 + \Sigma^0_1 \alpha_t^{N,i} + \Sigma^0_2 X_t^{N,i}\big) dW_t^0   \\ \vspace{.1cm}
dY_t^{N,i} = -  \Big(D_x L(X_t^{N,i},\alpha_t^{N,i}, m_{\bX_t^N}^N) + \frac{1}{N} D_m L(X_t^{N,i},\alpha_t^{N,i} m_{\bX_t^N}^N, X_t^{N,i})  \\ \vspace{.1cm}
\qquad \qquad + B_2^\top Y_t^{N,i} + \Sigma_2^\top Z_t^{N,i,i} + (\Sigma_2^0)^\top Z_t^{N,i,0}  \Big)  dt \\ \vspace{.1cm} \qquad \qquad  
+ \sum_{j = 1}^N Z_t^{N,i,j}  dW_t^j + Z_t^{N,i,0}  dW_t^0 \vspace{.1cm}  \\
X_0^i = \xi^i, \quad Y_T^i = D_x G(X_t^{N,i} m_{\bX_T^N}^N) + \frac{1}{N} D_m G(X_T^{N,i}, m_{\bX_T^N}^N, X_T^{N,i}).
\end{cases} 
\end{align}
We interpret the term $\Sigma_2^T Z_t^{N,i,i}$ as an element of $\R^n$ whose $j^{th}$ element is given by $\Sigma_2^j \cdot Z_t^{N,i,i}$, and similar notational conventions will be used in what follows.
\end{remark}

Similarly, the maximum principle relates equilibria of the mean field game to solutions of the (conditional) McKean--Vlasov FBSDE 
\begin{align} \label{eq:mkvfbsde}
  \begin{cases} \vspace{.1cm}
    dX_t = D_y H(X_t, Y_t, Z_t, Z_t^0, \sL(X_t | \sF_t^0)) dt + D_z H(X_t, Y_t, Z_t, Z_t^0, \sL(X_t | \sF_t^0)) dW_t  \\ \vspace{.1cm}
    \qquad + D_{z^0} H(X_t, Y_t, Z_t, Z_t^0, \sL(X_t | \sF_t^0)) dW_t^0,  \\ \vspace{.1cm}
  dY_t = - D_x H(X_t, Y_t, Z_t, Z_t^0, \sL(X_t | \sF_t^0)) dt + Z_t  dW_t + Z_t^{0}  dW_t^0, \\
  X_0 = \xi, \quad Y_T = D_x G\big(X_t, \sL(X_T | \cF_T^0)\big).
\end{cases}
\end{align}
By a solution to \eqref{eq:mkvfbsde}, we mean a tuple $(X,Y,Z,Z^0)$ such that $X$ and $Y$ are continuous and adapted $\R^n$-valued processes, $Z$ and $Z^0$ are progressive $\R^{n \times d}$-valued processes
 satisfying \eqref{eq:mkvfbsde} and the integrability condition 
\begin{align} \label{intconditionmkv}
\E\bigg[\sup_{0 \leq t \leq T} \big(|X_t|^2 + |Y_t|^2\big) + \int_0^T \Big(|Z_t|^2 +|Z_t^0|^2 \Big) dt \bigg] < \infty. 
\end{align}

More precisely, we have the following lemma, which is a straightforward generalization of \cite[Theorem 3.27]{cardelbook1}.

\begin{lemma} \label{lem:mpmfggen}
  Suppose that Assumption \ref{assump:mixed} holds, and let $(m,\alpha)$ be a MFE. Then
  \begin{align}
  \label{eq:char.mfe}
   \alpha_t = \widehat{\alpha}(X_t,Y_t,Z_t,Z_t^0,m_t) \,\, dt \times d\bP \,\, a.e. \,\,
   \end{align}
   for some solution $(X, Y, Z,Z^0)$ of \eqref{eq:mkvfbsde}.
   Suppose in addition that $x \mapsto G(x,m)$ is convex for each $m$ and
   \begin{align*}
   (x,a) \mapsto b(x,a,m) \cdot y + \sigma(x,a,m) \cdot z + \sigma^0(x,a,m) \cdot z^0 + L(x,a,m)
   \end{align*}
   is convex for each fixed $(y,z,z^0,m)$, and
   $\hat{\alpha}$ has linear growth in $(x,y,z,z^0,m)$. Then if $(X, Y, Z,Z^0)$ is any solution of \eqref{eq:mkvfbsde}, then $(m,\alpha)$ with
\begin{align*}
m_t = \sL(X_t | \sF_t^0), \quad \alpha_t = \widehat \alpha(X_t,Y_t,Z_t,Z_t^0,m_t)
\end{align*}
 is a MFE.
\end{lemma}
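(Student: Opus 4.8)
The plan is to prove the two implications separately, in each case reducing to the classical stochastic maximum principle for controlled diffusions by exploiting the fact that in a mean field equilibrium the measure flow $m$ is \emph{frozen}. It is convenient to introduce the un-minimized Hamiltonian
\[
\cH(x,a,y,z,z^0,m) := b(x,a,m)\cdot y + \sigma(x,a,m)\cdot z + \sigma^0(x,a,m)\cdot z^0 + L(x,a,m),
\]
so that $H(x,y,z,z^0,m) = \inf_a \cH = \cH(x,\hat\alpha,y,z,z^0,m)$ with $\hat\alpha$ the (unique, interior) minimizer guaranteed by Assumption \ref{assump:mixed}. For the necessity direction, the key observation is that once $m$ is exogenously fixed, minimizing $J_m$ over $\sA$ is a \emph{standard} stochastic control problem with controlled volatility, not a McKean--Vlasov one. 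I would apply the controlled-volatility Pontryagin principle (precisely the generalization of \cite[Theorem 3.27]{cardelbook1} alluded to in the statement): the adjoint pair $(Y,Z,Z^0)$ solves the BSDE with driver $-D_x\cH(X_t,\alpha_t,Y_t,Z_t,Z_t^0,m_t)$ and terminal value $D_xG(X_T,m_T)$, and the first-order optimality condition reads $D_a\cH(X_t,\alpha_t,Y_t,Z_t,Z_t^0,m_t)=0$, which by uniqueness of the minimizer forces $\alpha_t=\hat\alpha(X_t,Y_t,Z_t,Z_t^0,m_t)$. The envelope identities $D_yH=b$, $D_zH=\sigma$, $D_{z^0}H=\sigma^0$, and $D_xH=D_x\cH$ (all evaluated at $\hat\alpha$, valid since $D_a\cH=0$ there) then rewrite the system exactly as \eqref{eq:mkvfbsde}, while $m_t=\sL(X_t\mid\sF_t^0)$ holds by the equilibrium condition.

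For the sufficiency direction, I would start from a solution $(X,Y,Z,Z^0)$ of \eqref{eq:mkvfbsde} and set $m_t:=\sL(X_t\mid\sF_t^0)$ and $\alpha_t:=\hat\alpha(X_t,Y_t,Z_t,Z_t^0,m_t)$. The consistency relation holds by construction, and admissibility $\alpha\in\sA$ follows from the linear growth of $\hat\alpha$ together with \eqref{intconditionmkv}. To see that $\alpha$ minimizes $J_m$, take any competitor $\beta\in\sA$ with state $X^\beta$ and bound the terminal difference below using convexity of $x\mapsto G(x,m)$, giving $G(X^\beta_T,m_T)-G(X_T,m_T)\ge Y_T\cdot(X^\beta_T-X_T)$. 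Applying It\^o's formula to $Y_t\cdot(X^\beta_t-X_t)$ and using the identity $Y\cdot b+Z\cdot\sigma+Z^0\cdot\sigma^0=\cH-L$ to rewrite the drift, one obtains
\[
J_m(\beta)-J_m(\alpha)\ \ge\ \E\Big[\int_0^T\big(\cH_\beta-\cH_\alpha-D_x\cH_\alpha\cdot(X^\beta_t-X_t)\big)\,dt\Big],
\]
where $\cH_\alpha,\cH_\beta$ denote $\cH$ along the optimal and competing trajectories. Convexity of $(x,a)\mapsto\cH$ gives $\cH_\beta-\cH_\alpha\ge D_x\cH_\alpha\cdot(X^\beta_t-X_t)+D_a\cH_\alpha\cdot(\beta_t-\alpha_t)$, and since $D_a\cH_\alpha=0$ by the definition of $\hat\alpha$, the integrand is nonnegative and $J_m(\beta)\ge J_m(\alpha)$.

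The conceptual content is minimal, so the only genuine work is technical. The main obstacle is justifying that the stochastic integrals arising from It\^o's formula are true martingales, so that their expectations vanish; this is delicate because the volatility is controlled and the coefficients grow linearly, but a standard localization argument using the $L^2$-bounds \eqref{intconditionmkv} and the admissibility of $\beta$ handles it. A secondary point is the envelope calculation: one must confirm that $\hat\alpha$ is interior (so that $D_a\cH=0$ holds) and that $H$ is regular enough for its derivatives to be identified with those of $\cH$ at $\hat\alpha$, both of which are guaranteed by the coercivity and convexity built into Assumption \ref{assump:mixed}. Modulo these routine verifications, the result follows by specializing the controlled-volatility stochastic maximum principle.
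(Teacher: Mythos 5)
Your proposal is correct and follows essentially the same route as the paper: freeze the measure flow $m$ so that the problem becomes a standard stochastic control problem with random coefficients, then invoke the necessary and sufficient forms of the Pontryagin maximum principle (the paper simply cites \cite[Theorem 3.27]{cardelbook1} and \cite[Theorem 2.16]{cardelbook1} for the two directions, whereas you unpack the standard convexity/It\^o verification argument behind the sufficiency part). The only difference is one of exposition, not of substance.
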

\begin{proof}
  This characterization result can be derived by standard arguments, see e.g. \cite[Theorem 3.27]{cardelbook1} for the case without common noise.
  In fact, it suffices to apply the (standard) stochastic maximum principle for stochastic control problems (without games). Indeed, let $(m,\alpha)$ be a MFE.
  Then, applying the usual stochastic maximum principle for the stochastic control problem with (random) costs $f_t(x,a) := L(x, a,m_t)$ and $g(x) := G(x,m_T)$, it follows that $\alpha$ satisfies \eqref{eq:char.mfe}.
  Reciprocally, it follows from \cite[Theorem 2.16]{cardelbook1} again with $N=1$ and with the costs $f_t(x,a) := L(x, a, \cL(X_t\mid \cF^0_t))$ and $g(x) := G(x, \cL(X_T\mid \cF^0_T))$ 
  that $\alpha$ given by  \eqref{eq:char.mfe} must be optimal, which implies, by definition, that $(m,\alpha)$ is a MFE, where $m$ is a continuous version of $(\cL( X_t\mid \cF^0_t))_{t\in [0,T]}$.
\end{proof}

\begin{remark}
In the case that Assumptions \ref{assump:driftcontrolreg} and \ref{assump:driftcontrolstruct} hold (i.e. the case of constant volatility) Lemma \ref{lem:mpmfggen} shows that any MFE $(m,\alpha)$ must be of the form 
\begin{align*}
    m_t = \sL(X_t | \sF_t^0), \quad \alpha_t = D_y H(X_t, Y_t), 
\end{align*}
for some tuple $(X,Y,Z,Z^0)$ satisfying
\begin{align} \label{eq:mkvfbsdedrift}
  \begin{cases} \vspace{.1cm}
    dX_t = \alpha_t dt + \Sigma dW_t + \Sigma^0 dW_t^0,  \\ \vspace{.1cm}
  dY_t = - \Big(D_x L_0(X_t, \alpha_t) + D_x F\big(X_t, \cL(X_t\mid \cF^0_t)\big) \Big)dt + Z_t  dW_t + Z_t^{0}  dW_t^0, \\
  X_0 = \xi, \quad Y_T = D_x G\big(X_T, \sL(X_T | \cF_T^0)\big).
\end{cases}
\end{align}
\end{remark}

\begin{remark}
\label{rmk:fbsdeaffinemfe}
Under Assumptions \ref{assump:affinereg} and \ref{affinestruct} (i.e. in the case of affine dynamics), Lemma \ref{lem:mpmfggen} shows that any $(m,\alpha)$ must take the form
\begin{align*}
    m_t = \sL(X_t | \sF_t^0), \quad \alpha_t = \widehat{\alpha}(X_t,Y_t,Z_t,Z_t^0, \sL(X_t | \sF_t^0)), 
\end{align*}
for some tuple $(X,Y,Z,Z^0)$ satisfying
\begin{align} \label{eq:nplayerfbsdeaffinemfe}
  \begin{cases} \vspace{.1cm}
    dX_t = \Big(B_0 + B_1 \alpha_t + B_2 X_t \Big) dt + \Big( \Sigma_0 + \Sigma_1 \alpha_t + \Sigma_2 X_t\Big) dW_t^i + \Big(\Sigma^0_0 + \Sigma^0_1 \alpha_t + \Sigma^0_2 X_t \Big) dW_t^0   \\ \vspace{.1cm}
dY_t = - \Big( D_x L(X_t,\alpha_t, m_t) + B_2^\top Y_t + \Sigma_2^\top Z_t + (\Sigma_2^0)^\top Z_t^0 \Big) dt
+ Z_t  dW_t + Z_t^0  dW_t^0 \vspace{.1cm}  \\
X_0 = \xi, \quad Y_T = D_x G(X_T,m_T).
\end{cases} 
\end{align}
\end{remark}

\subsection{Technical Lemmas} 
\label{subsec:technical}

\subsubsection{Projecting the monotonicity conditions}

Here we record three Lemmas concerned with consequences of the displacement monotonicity conditions given in Assumptions \ref{assump:driftcontrolstruct}, \ref{assump:affinestruct}, and \ref{assump:controlledvolstruct}. The proofs are trivial, but we state them as Lemmas because they will be used repeatedly.

\begin{lemma} \label{lem:dispmonotoneprojected}
Suppose that $U = U(x,m) : \R^n \times \spt \to \R^n$ is $C$-displacement monotone. Then we have 
\begin{align*}
\sum_{i = 1}^N \Big(D_x U(x^i,m_{\bx}^N) - D_xU(\ov{x}^i,m_{\ov{\bx}}^N) \Big) \cdot (x^i - \ov{x}^i ) \geq C \sum_{i = 1}^N |x^i - \ov{x}^i|^2
\end{align*}
for any $\bx = (x^1,\dots,x^N),\,\, \ov{\bx} = (\ov{x}^1,\dots,\ov{x}^N) \in (\R^n)^N$.
\end{lemma}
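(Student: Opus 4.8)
The plan is to deduce the pointwise inequality by \emph{testing} the displacement monotonicity condition \eqref{def:dispsemimonotone} against a pair of discrete random variables whose laws are exactly the empirical measures $m_{\bx}^N$ and $m_{\ov{\bx}}^N$. This is the standard device for passing between the ``Lions lift'' formulation of monotonicity and its finite-dimensional projection, and it is the only idea needed.

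Concretely, fix $\bx = (x^1,\dots,x^N)$ and $\ov{\bx} = (\ov{x}^1,\dots,\ov{x}^N)$ in $(\R^n)^N$, and let $\theta$ be a random variable (defined on any convenient probability space) that is uniform on $\{1,\dots,N\}$, so that $\bP[\theta = i] = 1/N$ for each $i$. I would then set $\xi^1 := x^\theta$ and $\xi^2 := \ov{x}^\theta$. By the definition of $\theta$, for every Borel set $A$ one has $\bP[\xi^1 \in A] = \frac1N\sum_{i=1}^N 1_{\{x^i \in A\}}$, so that $\sL(\xi^1) = m_{\bx}^N$, and likewise $\sL(\xi^2) = m_{\ov{\bx}}^N$. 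Substituting this pair into \eqref{def:dispsemimonotone} with $c = C$, the left-hand side becomes
$$\E\Big[\big(D_x U(\xi^1,\sL(\xi^1)) - D_x U(\xi^2,\sL(\xi^2))\big)\cdot(\xi^1 - \xi^2)\Big] = \frac1N\sum_{i=1}^N \big(D_x U(x^i, m_{\bx}^N) - D_x U(\ov{x}^i, m_{\ov{\bx}}^N)\big)\cdot(x^i - \ov{x}^i),$$
while the right-hand side becomes $C\,\E[|\xi^1-\xi^2|^2] = \frac{C}{N}\sum_{i=1}^N |x^i - \ov{x}^i|^2$. Multiplying the resulting inequality through by $N$ yields precisely the claim.

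There is essentially no obstacle here: the entire content is the observation that a uniform random index converts the abstract $L^2$ monotonicity inequality into its empirical counterpart, and the only thing to verify — that $\xi^1,\xi^2$ carry the correct laws — is immediate from the choice of $\theta$. I would simply note in passing that $x^\theta$ and $\ov{x}^\theta$ are bounded, hence square integrable, so they are admissible test variables in \eqref{def:dispsemimonotone}.
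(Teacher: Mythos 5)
Your proposal is correct and is essentially identical to the paper's own proof: both test the inequality \eqref{def:dispsemimonotone} on the pair of discrete random variables with joint law $\bP[X = x^i,\ \ov{X} = \ov{x}^i] = 1/N$, which is exactly what your uniform random index $\theta$ produces. The computation of the two sides and the multiplication by $N$ match the paper's argument.
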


\begin{proof}
    By definition, we have 
    \begin{align*}
    \E\Big[\big(D_x U(X , \sL(X)) - D_xU(\ov{X}, \sL(\ov{X})\big) \cdot (X - \ov{X}) \Big] \geq C \E[|X - \ov{X}|^2]
    \end{align*}
    for any square-integrable random variables $X$ and $\ov{X}$ taking values in $\R^n$. 
    To prove the claim for any given $\bx = (x^1,\dots,x^N),\,\, \ov{\bx} = (\ov{x}^1,\dots,\ov{x}^N) \in (\R^n)^N$, test this inequality on discrete random variables $X$ and $\ov{X}$ whose joint distribution is given by 
    \begin{align*}
    \bP\big[ X = x^i, \ov{X} = \ov{x}^i \big] = 1/N, \quad i = 1,\dots,N.
    \end{align*}
\end{proof}

\begin{lemma} \label{lem:dispmonotoneprojectedaffine}
Suppose that Assumption \ref{assump:affinestruct} holds. Then we have 
\begin{align*}
    &\sum_{i = 1}^N \bigg( \Big(D_xL(x^i, a^i, m_{\bx}^N) - D_xL(\ov{x}^i, \ov{a}^i, m_{\ov{\bx}}^N) \Big) \cdot (x^i - \ov{x}^i) \\ 
    &\qquad + \Big(D_aL(x^i, a^i, m_{\bx}^N) - D_aL(\ov{x}^i, \ov{a}^i, m_{\ov{\bx}}^N) \Big) \cdot (a^i - \ov{a}^i) \bigg) \geq C_L \sum_{i = 1}^N |a^i - \ov{a}^i|^2 
\end{align*}
for any $\bx = (x^1,\dots,x^N),  \ov{\bx} = (\ov{x}^1,\dots,\ov{x}^N) \in (\R^n)^N$, $\bm a = (a^1,\dots,a^N)$, $\bm{\ov{a}} = (\ov{a}^1,\dots,\ov{a}^N) \in (\R^k)^N$. 
\end{lemma}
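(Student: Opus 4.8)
The plan is to mimic verbatim the proof of Lemma~\ref{lem:dispmonotoneprojected}: I would take the functional inequality \eqref{affinestruct} supplied by Assumption~\ref{assump:affinestruct} and test it against a discrete quadruple of random variables encoding the given finite data $\bx, \ov{\bx} \in (\R^n)^N$ and $\bm a, \bm{\ov a} \in (\R^k)^N$.

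Concretely, I would first introduce square-integrable random variables $X, \ov X, \alpha, \ov\alpha$ (on, say, the canonical space) with joint law
\begin{align*}
\bP\big[X = x^i, \, \ov X = \ov x^i, \, \alpha = a^i, \, \ov\alpha = \ov a^i \big] = \frac1N, \quad i = 1,\dots,N,
\end{align*}
so that the marginals are exactly $\sL(X) = m_{\bx}^N$ and $\sL(\ov X) = m_{\ov{\bx}}^N$. Since \eqref{affinestruct} is postulated for \emph{arbitrary} square-integrable inputs and depends only on their joint law, it applies to this choice. Substituting, each expectation collapses to a normalized sum over $i$: the first bracket becomes $\frac1N \sum_{i=1}^N \big(D_x L(x^i,a^i,m_{\bx}^N) - D_x L(\ov x^i, \ov a^i, m_{\ov{\bx}}^N)\big) \cdot (x^i - \ov x^i)$, the $D_a L$ term transforms identically, and the right-hand side reads $C_L \E[|\alpha - \ov\alpha|^2] = \frac{C_L}{N}\sum_{i=1}^N |a^i - \ov a^i|^2$. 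Multiplying the resulting inequality through by $N$ yields precisely the claimed estimate.

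I do not anticipate any genuine obstacle: the argument is a one-line substitution, and the only point needing (harmless) verification is the bookkeeping that the empirical measures $m_{\bx}^N, m_{\ov{\bx}}^N$ arising as the laws of $X, \ov X$ coincide with the measure arguments appearing inside $D_x L$ and $D_a L$ in the statement, which holds by construction. As in Lemma~\ref{lem:dispmonotoneprojected}, the proof is therefore immediate once the correct discrete test variables are written down.
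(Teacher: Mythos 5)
Your proof is correct and is exactly the paper's argument: test \eqref{affinestruct} on discrete random variables with joint law $\bP[X = x^i, \ov X = \ov x^i, \alpha = a^i, \ov\alpha = \ov a^i] = 1/N$, note the marginals are the empirical measures, and multiply through by $N$. No issues.
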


\begin{proof}
Test the inequality \eqref{affinestruct} on random variables $X, \ov{X}, \alpha, \ov{\alpha}$ with joint distribution 
\begin{align*}
\bP \big[ X = x^i, \ov{X}= y^i, \alpha = a^i, \ov{\alpha} = \ov{a}^i \big] = 1/N, \quad i = 1,\dots,N.
\end{align*}
\end{proof}

\begin{lemma} \label{lem:finitedimmonotone}
    Let Assumption \ref{assump:controlledvolstruct} hold. Then
    we have 
    \begin{align*}
        \sum_{i = 1}^N \bigg( &- \big(D_x H(x^i,y^i,z^i,z^{0,i},m_{\bx}^N) - D_x H(\ov{x}^i,\ov{y}^i,\ov{z}^i,\ov{z}^{0,i},m_{\ov{\bx}}^N) \big) \cdot (x^i - \ov{x}^i) 
        \\
        &+  \big(D_y H(x^i,y^i,z^i,z^{0,i},m_{\bx}^N) - D_y H(\ov{x}^i,\ov{y}^i,\ov{z}^i,\ov{z}^{0,i}, m_{\ov{\bx}}^N) \big)\cdot (y^i - \ov{y}^i) 
        \\
         &+  \big(D_z H(x^i,y^i,z^i,z^{0,i},m_{\bx}^N) - D_z H(\ov{x}^i,\ov{y}^i,\ov{z}^i,\ov{z}^{0,i},m_{\ov{\bx}}^N) \big) \cdot (z^i - \ov{z}^i)
         \\
         &+  \big(D_{z^0} H(x^i,y^i,z^i,z^{0,i},m_{\bx}^N) - D_{z^0} H(\ov{x}^i,\ov{y}^i,\ov{z}^i,\ov{z}^{0,i},m_{\ov{\bx}}^N) \big)  \cdot (z^{0,i} - \ov{z}^{0,i})  \bigg) \\
         &\qquad \leq - C_H \sum_{i = 1}^N |x^i - \ov{x}^i|^2
    \end{align*}
    for any 
    \begin{align*}
        &\bx = (x^1,\dots,x^N), \,\, \ov{\bx} = (\ov{x}^1,\dots,\ov{x}^N), \,\, \by = (y^1,\dots,y^N), \,\, \ov{\by} = (\ov{y}^1,\dots,\ov{y}^N) \in (\R^n)^N, \\
        &\bz = (z^1,\dots,z^N), \,\, \ov{\bz} = (\ov{z}^1,\dots,\ov{z}^N), \,\, \bz^0 = (z^{0,1},\dots,z^{0,N}), \,\, \ov{\bz}^0 = (\ov{z}^{0,1},\dots,\ov{z}^{0,N}) \in (\R^{n \times d})^N, 
    \end{align*}
\end{lemma}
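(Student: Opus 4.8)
The plan is to prove this exactly as Lemmas \ref{lem:dispmonotoneprojected} and \ref{lem:dispmonotoneprojectedaffine} were proved: by testing the functional inequality \eqref{hamiltonianmonotone} of Assumption \ref{assump:controlledvolstruct} on a carefully chosen family of discrete random variables. Given the finite-dimensional data $\bx, \ov{\bx}, \by, \ov{\by} \in (\R^n)^N$ and $\bz, \ov{\bz}, \bz^0, \ov{\bz}^0 \in (\R^{n \times d})^N$, I would introduce random variables $X, \ov{X}, Y, \ov{Y}, Z, \ov{Z}, Z^0, \ov{Z}^0$ whose joint law places mass $1/N$ on each of the $N$ atoms indexed by $i$, i.e.
\begin{align*}
\bP\big[X = x^i, \ov{X} = \ov{x}^i, Y = y^i, \ov{Y} = \ov{y}^i, Z = z^i, \ov{Z} = \ov{z}^i, Z^0 = z^{0,i}, \ov{Z}^0 = \ov{z}^{0,i}\big] = \frac{1}{N}, \quad i = 1,\dots,N.
\end{align*}
Since these variables take finitely many values, they are trivially square-integrable, so \eqref{hamiltonianmonotone} applies to them.

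The key observation is that with this construction $\sL(X) = m_{\bx}^N$ and $\sL(\ov{X}) = m_{\ov{\bx}}^N$, so that the law arguments $\sL(X)$ and $\sL(\ov{X})$ appearing inside $H$ in \eqref{hamiltonianmonotone} coincide exactly with the empirical measures $m_{\bx}^N$ and $m_{\ov{\bx}}^N$ featuring in the statement of the lemma. Consequently each of the four expectations on the left-hand side of \eqref{hamiltonianmonotone} evaluates to $\tfrac{1}{N}$ times the corresponding sum over $i = 1,\dots,N$, and likewise $\E[|X - \ov{X}|^2] = \tfrac{1}{N} \sum_{i=1}^N |x^i - \ov{x}^i|^2$. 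Substituting these identities into \eqref{hamiltonianmonotone} and multiplying through by $N$ then produces precisely the claimed finite-dimensional inequality, with the same constant $C_H$.

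There is essentially no obstacle here; the argument is as trivial as those for the two preceding lemmas. The only point deserving a moment's care is verifying that the measure arguments line up correctly, that is, that $\sL(X) = m_{\bx}^N$ (and not some other marginal). This is immediate from the construction, since $X$ takes the value $x^i$ with probability $1/N$ for each $i$, so its law is exactly $\tfrac{1}{N}\sum_{i=1}^N \delta_{x^i} = m_{\bx}^N$. Because \eqref{hamiltonianmonotone} is assumed to hold for \emph{all} square-integrable random variables of the appropriate dimensions, no approximation or additional integrability argument is required to pass from the functional form to the finite-dimensional one.
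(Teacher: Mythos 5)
Your proof is correct and is exactly the paper's argument: the authors also test \eqref{hamiltonianmonotone} on the discrete random variables placing mass $1/N$ on each atom $(x^i,\ov{x}^i,y^i,\ov{y}^i,z^i,\ov{z}^i,z^{0,i},\ov{z}^{0,i})$, note that $\sL(X)=m_{\bx}^N$ and $\sL(\ov{X})=m_{\ov{\bx}}^N$, and multiply through by $N$. No differences worth noting.
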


\begin{proof}
    Test the condition \eqref{hamiltonianmonotone} on discrete random variables $X,\ov{X}, Y, \ov{Y}, Z, \ov{Z} ,Z^0, \ov{Z}^0$ with joint distribution given by
    \begin{align*}
        \bP[X = x^i, \ov{X} = \ov{x}^i, Y = y^i, \ov{Y} = \ov{y}^i, Z = z^i, \ov{Z} = \ov{z}^i, Z^0 = z^{0,i}, \ov{Z}^0 = \ov{z}^{0,i}] = \frac{1}{N}, \quad i = 1,\dots,N.
    \end{align*}
\end{proof}

\subsubsection{Uniqueness and exchangeabity properties of the $N$-player games}

We will now establish some properties of the finite-player games which will play an important role in the proofs of the convergence results.

\begin{lemma} \label{lem:uniqueness}
Suppose that Assumption \ref{assump:mixed} holds.
Then there is a number $N_0 \in \N$ such that for any $N \geq N_0$, there is at most one Nash equilibrium for the N-player game.
\end{lemma}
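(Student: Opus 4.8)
The plan is to reduce the uniqueness of Nash equilibria to the uniqueness of solutions of the $N$-player FBSDE \eqref{eq:nplayerfbsde}. By Lemma \ref{lem:N.player.smp}, any Nash equilibrium $\bm\alpha^N$ gives rise to a solution $(\bX^N,\bY^N,\bZ^N)$ of \eqref{eq:nplayerfbsde} with $\alpha_t^{N,i}=\hat\alpha(X_t^{N,i},Y_t^{N,i},Z_t^{N,i,i},Z_t^{N,i,0},m^N_{\bX_t^N})$. Since the control is recovered as a measurable function of the solution, it suffices to show that for $N$ large \eqref{eq:nplayerfbsde} admits at most one solution: two distinct equilibria would produce two solutions, and if those solutions coincide then so do the recovered controls, $dt\times d\bP$-a.e.

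So suppose $(\bX,\bY,\bZ)$ and $(\bX',\bY',\bZ')$ both solve \eqref{eq:nplayerfbsde}, and write $\delta X^i=X^{N,i}-X'^{N,i}$ and similarly $\delta Y^i,\delta Z^{i,j},\delta\alpha^i$. First I would apply It\^o's formula to $\sum_{i=1}^N\delta X_t^i\cdot\delta Y_t^i$ and take expectations over $[0,T]$; since $\delta X_0^i=0$ the left endpoint vanishes and the martingale part drops out. Collecting the drift and quadratic-covariation terms, the resulting integrand is, up to the $O(1/N)$ contributions coming from the finite-population corrections $\tfrac1N D_mH$ (resp. $\tfrac1N D_mL$, $\tfrac1N D_mF$), precisely the combination controlled by the projected monotonicity conditions. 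In the general case this combination is $\leq -C_H\sum_i|\delta X^i|^2$ by Lemma \ref{lem:finitedimmonotone}, while the terminal pairing obeys $\E\sum_i\delta X_T^i\cdot\big(D_xG(\cdot)-D_xG(\cdot)\big)\geq C_G\,\E\sum_i|\delta X_T^i|^2$ by Lemma \ref{lem:dispmonotoneprojected}, with $C_G>0$ under Assumption \ref{assump:controlledvolstruct}. In the affine and constant-volatility cases I would instead use the first-order condition characterizing $\hat\alpha$ to rewrite the pairings involving $\delta Y^i,\delta Z^{i,i},\delta Z^{i,0}$ in terms of $\delta\alpha^i$ (the $B_2,\Sigma_2,\Sigma_2^0$ contributions cancel against matching terms in the $Y$-drift and the covariation), and then invoke Lemma \ref{lem:dispmonotoneprojectedaffine} (affine case) or the convexity \eqref{jointconvex} together with $|\delta X_t^i|^2\leq T\int_0^T|\delta\alpha_s^i|^2\,ds$ and \eqref{convexmonotone} (constant-volatility case). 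This yields a key inequality $c\,\E\big[\sum_i|\delta X_T^i|^2+\int_0^T\sum_i|\delta X^i|^2\,dt\big]\leq(\text{$O(1/N)$ terms})$ with $c=C_H\wedge C_G>0$ in the general case, and $c\,\E\int_0^T\sum_i|\delta\alpha^i|^2\,dt\leq(\text{$O(1/N)$ terms})$ with $c=C_L$ (affine) or $c=\kappa:=C_L+(C_F\wedge0)\tfrac{T^2}{2}+(C_G\wedge0)T>0$ (constant volatility).

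To close the argument I would bound the $O(1/N)$ terms. Using Lipschitz continuity of $D_mH,D_mG$ (resp. $D_mL,D_mF$) and the elementary estimate $\cW_2(m^N_{\bX},m^N_{\bX'})^2\leq\frac1N\sum_i|\delta X^i|^2$, these terms are bounded by $\frac{C}{N}$ times $L^2$-norms of $(\delta X,\delta Y,\delta Z)$ in the general case, and of $\delta X,\delta\alpha$ in the other two. I then control the auxiliary norms by the quantity already on the left: in the general case a standard backward $L^2$-estimate for the BSDE satisfied by $\delta Y$ gives $\E\big[\sup_t\sum_i|\delta Y^i|^2+\int_0^T\sum_{i,j}|\delta Z^{i,j}|^2\,dt\big]\leq C\,\E\big[\sum_i|\delta X_T^i|^2+\int_0^T\sum_i|\delta X^i|^2\,dt\big]$ with $C$ independent of $N$; in the affine case a forward SDE estimate gives $\E\sup_t\sum_i|\delta X^i|^2\leq C\,\E\int_0^T\sum_i|\delta\alpha^i|^2\,dt$ (the constant-volatility case is simplest, since $\delta X^i_t=\int_0^t\delta\alpha^i_s\,ds$ and $D_mF$ is independent of $(y,z)$, so no auxiliary estimate is needed). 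Substituting back, the $O(1/N)$ terms become $\frac{C}{N}$ times the controlled quantity, so choosing $N_0$ with $C/N_0<c$ forces $\delta X\equiv0$ (resp. $\delta\alpha\equiv0$, hence $\delta X\equiv0$) for all $N\geq N_0$; the backward (resp. forward) equation then yields $\delta Y\equiv\delta Z\equiv0$, giving uniqueness of the solution and thus of the Nash equilibrium.

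The main obstacle I anticipate is the a priori estimate in the fully coupled general case: one must check that the BSDE bound controlling $\delta Y$ and $\delta Z$ by $\delta X$ and $\delta X_T$ holds with a constant genuinely independent of $N$, despite the proliferation of noise components $W^1,\dots,W^N$. This works because the Lipschitz constants of $H$ and $G$ are dimension-free and all the interaction enters only through the $\cW_2$-distance of empirical measures, which is itself dimension-free. The restriction $N\geq N_0$ plays an essential role: it is exactly what allows the $O(1/N)$ perturbations from the finite-population corrections $\tfrac1N D_mH$, $\tfrac1N D_mG$ (and analogues) to be absorbed against the strictly positive monotonicity margins $C_H$, $C_G$, $C_L$, $\kappa$.
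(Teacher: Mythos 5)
Your proposal is correct and follows essentially the same strategy as the paper: reduce via Lemma \ref{lem:N.player.smp} to uniqueness for the FBSDE \eqref{eq:nplayerfbsde}, apply It\^o's formula to $\sum_i \Delta X_t^i \cdot \Delta Y_t^i$, invoke the projected monotonicity lemmas, and absorb the $O(1/N)$ corrections from $\tfrac1N D_m H$, $\tfrac1N D_m G$ (and analogues) for $N \geq N_0$. The only cosmetic difference is that in the general controlled-volatility case the paper does not carry out this computation but instead cites the finite-dimensional monotone-FBSDE uniqueness theorem of Peng--Wu, whereas you perform the (equivalent) direct argument, including the dimension-free BSDE stability estimate controlling $\Delta Y,\Delta Z$ by $\Delta X$.
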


\begin{proof}
 By Lemma \ref{lem:N.player.smp}, it suffices to show that for $N$ large enough, the FBSDE \eqref{eq:mkvfbsde} has a unique solution.
 \newline \newline \noindent
  \textbf{Case 1 (Assumptions \ref{assump:controlledvolreg} and \ref{assump:controlledvolstruct} hold):} In this case, we can simply note that Lipschitz continuity of $D_m H$ together with the structural condition \eqref{assump:controlledvolstruct} guarantee that for $N$ large enough, the driver of the FBSDE \eqref{eq:nplayerfbsde} satisfies condition (H2.2) in \cite{pengwumonotone} with $\beta_1$ therein such that $\beta_1 > 0$. 
  Assumption \ref{assump:controlledvolreg}, meanwhile, easily gives the Lipschitz continuity conditions appearing in (H2.1) of \cite{pengwumonotone}. Thus for $N$ large enough, the coefficients of \eqref{eq:nplayerfbsde} satisfy the hypotheses of \cite[Theorem 2.2]{pengwumonotone}, which shows that there is at most one Nash equilibrium for $N$ large enough.
  \newline \newline \noindent
   \textbf{Case 2 (Assumptions \ref{assump:driftcontrolreg} and \ref{assump:driftcontrolstruct} hold):} In this case, we cannot directly rely on results from the finite-dimensional FBSDE literature, so we derive the result here. We proceed in three steps.
   \newline  \noindent  
   \textit{Step 1 - Applying the maximum principle:}
  Suppose that we have two equlibria $\bm \alpha^N = (\alpha^{N,1},\dots,\alpha^{N,N})$ and $\bm{\wt{\alpha}} = (\wt{\alpha}^{N,1},\dots,\wt{\alpha}^{N,N})$. 
  By strict convexity of $L_0$, it follows that $D_yH_0(x,y)$ is the unique minimizer of $a\mapsto L_0(x,a) + a\cdot y$.
  Thus, applying Lemma \ref{lem:N.player.smp} yields that $\bm \alpha^N$ and $\bm{\wt\alpha}^N$ both take the form
  \begin{equation*}
    \alpha^{N,i}_t = D_yH_0(X^{N,i}_t, Y^{N,i}_t)\quad \text{and}\quad \wt\alpha^{N,i}_t = D_yH_0(\wt X^{N,i}_t, \wt Y^{N,i}_t)\,\, dt\times d\P\text{--a.s.}
  \end{equation*}
  where the processes 
\begin{align*}
(\bX^N, \bY^N, \bZ^N) = \big( (X^{N,1},\dots,X^{N,N}), (Y^{N,1},\dots,Y^{N,N}), (\bZ^{N,1},\dots,\bZ^{N,N})\big), \\
(\bm{\wt{X}}^N, \bm{\wt{Y}}^N, \bm{\wt{Z}}^N) = \big( (\wt{X}^{N,1},\dots,\wt{X}^{N,N}), (\wt{Y}^{N,1},\dots,\wt{Y}^{N,N}), (\bm{\wt{Z}}^{N,1},\dots,\bm{\wt{Z}}^{N,N})\big)
\end{align*} 
both satisfy \eqref{eq:nplayerfbsdedrift}.
We set
\begin{align*}
  \Delta X_t^i := X_t^{N,i} - \wt{X}_t^{N,i}, \quad \Delta Y_t^i := Y_t^{N,i} - \wt{Y}_t^{N,i}, \quad \Delta \alpha_t^i := \alpha_t^{N,i} - \wt{\alpha}_t^{N,i}, 
\end{align*}
and as usual, $m_{\bX_t^N}^N = \frac{1}{N} \sum_{i = 1}^N \delta_{X_t^{N,i}}$ and $m_{\wt \bX_t^N}^N = \frac{1}{N} \sum_{i = 1}^N \delta_{\wt X_t^{N,i}}$.

\noindent 
\textit{Step 2 - Computing $d \Delta X_t^i \cdot \Delta Y_t^i$:}
Notice that since $\alpha_t^{N,i}$ minimizes $a \mapsto  L_0(X_t^{N,i}, a) + a \cdot Y_t^{N,i}$, we get 
\begin{align*}
Y_t^{N,i} = - D_a L_0(X_t^{N,i}, \alpha_t^{N,i}), \text{ and similarly }
\wt{Y}_t^i = - D_a L_0(\wt{X}_t^{N,i}, \wt{\alpha}_t^{N,i}).  
\end{align*}
Using these identities, a tedious but straightforward computation shows that 
\begin{align} \label{xydynamicsunqiueness}
\notag
 & d \Delta X_t^i \cdot \Delta Y_t^i \\\notag
 &= - \bigg(\Delta \alpha^i_t \cdot  \big(D_a L_0(X_t^{N,i}, \alpha_t^{N,i}) - D_a L_0(\wt{X}_t^{N,i}, \wt{\alpha}_t^i) \big) + \Delta X_t^i \cdot \big(D_x L_0(X_t^{N,i}, \alpha_t^{N,i}) - D_x L_0(\wt{X}_t^{N,i}, \wt{\alpha}_t^{N,i}) \big) \nonumber \\
  &+ \Delta X_t^i \cdot \big(D_x F(X_t^{N,i}, m_{\bX_t^N}^N) - D_x F(\wt{X}_t^{N,i}, m_{\wt{\bX}_t}^N)\big) \nonumber \\
  &+ \frac{1}{N} \Delta X_t^i \cdot \big(D_m F(X_t^{N,i}, m_{\bX_t^N}^N, X_t^{N,i}) - D_m F(\wt{X}_t^{N,i}, m_{\bm{\wt{X}}_t^N}^N, \wt{X}_t^{N,i}) \big) \bigg) dt + dM_t^i,
\end{align}
with $M^i$ a martingale.
The terminal condition, meanwhile, can be written as
\begin{align*}
  \Delta X_T^i \cdot \Delta Y_T^i &= \Delta X_T^{i} \cdot \big(D_x G(X_T^{N,i}, m_{\bX_T^N}^N) - D_x G(\wt X_T^{N,i}, m_{\wt\bX_T}^N) \big) \\
  &\qquad + \frac{1}{N} \Delta X_T^i \cdot  \big(D_m G(X_T^{N,i}, m_{\bX_T^N}^N, X_T^{N,i}) - D_m G(\wt X_T^{N,i}, m_{\wt{\bX}_T^N}^N, \wt{X}_T^{N,i}) \big).
\end{align*}
\noindent
\textit{Step 3 - Using the monotonicity condition:}
Now we integrate \eqref{xydynamicsunqiueness}, sum over $i$, and take expectations to get\footnote{Here and throughout the paper, we use the shorthand notation $\sum_{i}$ for $\sum_{i=1}^N$.}
\begin{align*}
  -\E\bigg[& \int_0^T \sum_i \bigg(\Delta \alpha_t^i \cdot \big(D_a L_0(X_t^{N,i}, \alpha_t^{N,i}) - D_a L_0(\wt{X}_t^{N,i}, \wt{\alpha}_t^{N,i}) \big)\\
  & + \Delta X_t^i \cdot \big(D_x L_0(X_t^{N,i}, \alpha_t^{N,i}) - D_x L_0(\wt{X}_t^{N,i}, \wt{\alpha}_t^{N,i}) \big) + \Delta X_t^i \cdot \big(D_x F(X_t^{N,i}, m_{\bX_t^N}^N) - D_x F(\wt{X}_t^{N,i}, m_{\wt{\bX}^N_t}^N)\big) \\ 
    & + \frac{1}{N}\Delta X_t^{N,i} \cdot \big(D_m F(X_t^{N,i}, m_{\bX_t^N}^N, X_t^{N,i}) - D_m F_t(\wt{X}_t^{N,i}, m_{\wt\bX_t^N}^N, \wt{X}_t^{N,i}) \big) \bigg) dt \bigg] \\
  &= \E\bigg[ \sum_i \Delta X_T^i \cdot \Delta Y_T^i\bigg] 
  = \E\bigg[\sum_i \Delta X_T^i \cdot \big(D_x G(X_T^{N,i}, m_{\bX_T^N}^N) - D_x G(\wt{X}_T^{N,i}, m_{\wt\bX_T}^N) \big) \\
& \qquad + \frac{1}{N} \sum_i \Delta X_T^i \cdot  \big(D_m G(X_T^{N,i}, m_{\bX_T^N}^N, X_T^{N,i}) - D_m G(\wt{X}_T^{N,i}, m_{\wt{\bX}_T^N}^N, \wt{X}_T^{N,i}) \big)\bigg]. 
\end{align*}
We rearrange and use the definitions of $C_L$, $C_{F}$, $C_{G}$ together with Lemma \ref{lem:dispmonotoneprojected} to get
\begin{align} \label{smallnessunique}
  C_L &\E \bigg[\int_0^T \sum_i |\Delta \alpha_t^i|^2 dt \bigg] \nonumber \\
    &\leq \E \bigg[ \int_0^T \Big( -C_{F} \sum_i |\Delta X_t^i|^2 + \frac{1}{N} \sum_i |D_m F(X_t^{N,i}, m_{\bX_t^N}^N, X_t^{N,i}) - D_m F(\wt{X}_t^{N,i}, m_{\bm{\wt{X}}_t^N}^N, \wt{X}_t^{N,i})||\Delta X_t^i|\Big) dt \nonumber \\
    & \quad \quad - C_{G} \sum_i |\Delta X_T^i|^2 + \frac{1}{N} \sum_i |D_m G(X_T^{N,i}, m_{\bX_T^N}^N, X_T^{N,i}) - D_m G(\wt{X}_T^{N,i}, m_{\bm{\bX}_T^N}^N, \wt{X}_T^{N,i})| |\Delta X_T^i| \bigg] \nonumber \\
  &\leq -\Big((C_{G} \wedge 0)T + \frac{(C_{F} \wedge 0) T^2}{2} \Big) \E\bigg[ \int_0^T\sum_i |\Delta \alpha_t^i|^2 dt\bigg] + \frac{C}{N} \E \bigg[ \int_0^T \sum_i |\Delta X_t^i|^2  dt + \sum_i |\Delta X_T^i|^2 \bigg] \nonumber \\
  &\leq -\Big((C_{G} 
  \wedge 0)T + \frac{(C_{F} \wedge 0) T^2}{2} - \frac{C}{N} \Big) \E\bigg[ \int_0^T\sum_i |\Delta \alpha_t^i|^2 dt\bigg]
\end{align}
where we have used repeatedly the fact that $\Delta X_t^i = \int_0^t \Delta \alpha_s^i ds$, and so 
\begin{align}
  \E[|\Delta X_t^i|^2] = \E\bigg[\Big|\int_0^t  \Delta \alpha_s^i ds\Big|^2\bigg] \leq t \E\bigg[\int_0^t |\Delta \alpha_s^i|^2 ds\bigg], 
\end{align}
as well as the fact that global Lipschitz continuity of $D_m F$ implies
\begin{align*}
  |D_m F(x^i, m_{\bx}^N,x^i) - D_m F(\wt{x}^i, m_{\wt{\bx}}^N, \wt{x}^i)|^2 \leq C |x^i - \wt{x}^i|^2 + \frac{1}{N} |\bx - \wt{\bx}|^2
\end{align*}
and likewise for $D_m G$.
From \eqref{smallnessunique}, we conclude that for some $C$ independent of $N$ we have 
\begin{align*}
  \Big(C_L + (C_{G} \wedge 0) T + \frac{(C_{F} \wedge 0) T^2}{2} - \frac{C}{N}\Big) \E \bigg[ \int_0^T \sum_i |\Delta \alpha_t^i|^2 dt \bigg] \leq 0
\end{align*}
for each $N$. 
Thus for $N$ sufficiently large we have $\Delta \alpha_t^{N,i} = 0$ $dt\times d\P$--a.s. for all $i = 1,\dots, N$.
From here, it is easy to conclude that in fact $\Delta X_t^{N,i} = 0$ $\P$-a.s. for all $t$ and for all $i=1,\dots,N$.
Hence, $(\bX^N, \bY^N, \bZ^N, \bZ^{N,0}) = (\wt{\bX}^N, \wt{\bY}^N, \wt{\bZ}^N, \wt{\bZ}^{N,0})$. 
\newline \newline \noindent 
\textbf{Case 3 (Assumption \ref{assump:affinereg} and \ref{affinestruct} hold):} The proof is very similar to case 2, and so we follow the same three steps but keep the argument brief. 
\newline \noindent 
\textit{Step 1 - Applying the maximum principle:} This time, Lemma \ref{lem:N.player.smp} shows that $\bm \alpha^N$ and $\wt{\bm \alpha}^N$ take the form 
\begin{align*}
    \alpha_t^{N,i} = \widehat{\alpha}(X_t^{N,i}, Y_t^{N,i}, Z_t^{N,i,i}, Z_t^{N,i,0},m^N_{\bm X^N_t}), \quad \wt{\alpha}_t^{N,i} = \widehat{\alpha}(\wt{X}_t^{N,i}, \wt{Y}_t^{N,i}, \wt{Z}_t^{N,i,i}, \wt{Z}_t^{N,i,0},m^N_{\wt{\bm X}^N_t}) ,
\end{align*}
for some tuples 
\begin{align*}
(\bX^N, \bY^N, \bZ^N) = \big( (X^{N,1},\dots,X^{N,N}), (Y^{N,1},\dots,Y^{N,N}), (\bZ^{N,1},\dots,\bZ^{N,N})\big), \\
(\bm{\wt{X}}^N, \bm{\wt{Y}}^N, \bm{\wt{Z}}^N) = \big( (\wt{X}^{N,1},\dots,\wt{X}^{N,N}), (\wt{Y}^{N,1},\dots,\wt{Y}^{N,N}), (\bm{\wt{Z}}^{N,1},\dots,\bm{\wt{Z}}^{N,N})\big)
\end{align*} 
both satisfying \eqref{eq:nplayerfbsdeaffine}. 
\newline 
\noindent \textit{Step 2 - Computing $d \Delta X_t^i \cdot \Delta Y_t^i$:} We now use the same notation as above and once again compute $d \Delta X_t^i \cdot \Delta Y_t^i$. 
The fact that $\alpha_t^{N,i}$ maximizes 
\begin{align*}
    a \mapsto Y_t^{N,i} \cdot \big( B_1 a \big) + Z_t^{N,i,i} \cdot \big(\Sigma_1 a \big) + Z_t^{N,i,0} \cdot \big( \Sigma_1^0 a \big) + L(X_t^{N,i}, m_{\bX_t^N}^N, a)
\end{align*}
leads to 
\begin{align} \label{dacomp}
    D_a L(X_t^{N,i}, \alpha_t^{N,i}, m_{\bX_t^N}^N) = - \Big( B_1^\top Y_t^{N,i} + \Sigma_1^\top Z_t^{N,i,i} + (\Sigma_1^0)^\top Z_t^{N,i,0} \Big) 
\end{align}
and likewise the same argument gives
\begin{align} \label{dacomp2}
    D_a L(\ov{X}_t^{N,i}, \ov{\alpha}_t^{N,i}, m_{\ov{\bX}_t^N}^N) = - \Big( B_1^\top \ov{Y}_t^{N,i} + \Sigma_1^\top \ov{Z}_t^{N,i,i} + (\Sigma_1^0)^\top \ov{Z}_t^{N,i,0} \Big).
\end{align}
This time we use \eqref{dacomp} and \eqref{dacomp2} to find 
\begin{align} \label{xydynamicsunqiueness2}
  \nonumber d \Delta X_t^i \cdot \Delta Y_t^{i} = - &\bigg(\Delta \alpha^i_t \cdot  \big(D_a L(X_t^{N,i}, \alpha_t^{N,i}, m_{\bX_t^N}^N) - D_a L(\wt{X}_t^{N,i}, \wt{\alpha}_t^{N,i}, m_{\wt{\bX}_t^N}^N) \big) \\
  &+ \Delta X_t^i \cdot \big(D_x L(X_t^{N,i}, \alpha_t^{N,i}, m_{\bX_t^N}^N) - D_x L(\wt{X}_t^{N,i}, \wt{\alpha}_t^{N,i}, m_{\bX_t^N}^N) \big) \nonumber \\
  &+ \frac{1}{N} \Delta X_t^i \cdot \big(D_m F(X_t^{N,i}, m_{\bX_t^N}^N, X_t^{N,i}) - D_m F(\wt{X}_t^{N,i}, m_{\bm{\wt{X}}_t^N}^N, \wt{X}_t^{N,i}) \big) \bigg) dt + dM_t^i.
\end{align}
\textit{Step 3 - Using the monotonicity condition:}
Summing over $i$, applying Lemma \ref{lem:dispmonotoneprojectedaffine}, and taking expectations leads to 
\begin{align} \label{smallnessunique2}
  C_L &\E \bigg[\int_0^T \sum_i |\Delta \alpha_t^i|^2 dt \bigg] \nonumber \\
    &\leq \E \bigg[ \int_0^T \Big( \frac{1}{N} \sum_i |D_m F(X_t^{N,i}, m_{\bX_t^N}^N, X_t^{N,i}) - D_m F(\wt{X}_t^{N,i}, m_{\bm{\wt{X}}_t^N}^N, \wt{X}_t^{N,i})||\Delta X_t^i|\Big) dt \nonumber \\
    & \quad \quad  + \frac{1}{N} \sum_i |D_m G(X_T^{N,i}, m_{\bX_T^N}^N, X_T^{N,i}) - D_m G(\wt{X}_T^{N,i}, m_{\bm{\bX}_T^N}^N, \wt{X}_T^{N,i})| |\Delta X_T^i| \bigg] \nonumber \\
  &\leq  \frac{C}{N} \E \bigg[ \int_0^T \sum_i |\Delta X_t^i|^2  dt + \sum_i |\Delta X_T^i|^2 \bigg] \nonumber \\
  &\leq  \frac{C}{N} \E\bigg[ \int_0^T\sum_i |\Delta \alpha_t^i|^2 dt\bigg]
\end{align}
where we used the fact that $G$ is displacement monotone and the fact that 
\begin{align*}
    \E\Big[\sup_{0 \leq t \leq T} |\Delta X_t^{N,i}|^2 \Big] \leq C \E\bigg[\int_0^T |\Delta \alpha_t^{N,i}|^2 dt\bigg]. 
\end{align*}
Taking $N$ large enough completes the proof.
\end{proof}

\begin{lemma} \label{lem:exchangeable}
  Let $N \in \N$, and suppose that there is exactly one Nash equilibrium $\bm{\alpha}^N = (\alpha^{N,1},\dots,\alpha^{N,N})$ for the $N$-player game. 
  Let $\bX^N = (X^{N,1},\dots,X^{N,N})$ denote the state process corresponding to $\bm \alpha^N$, let $(\alpha^i)_{i = 1,\dots,N}$ denote independent copies of some MFE $\alpha$, and $(X^i)_{i = 1,\dots,N}$ denote independent copies of the state process $X$ corresponding to $\alpha$.
  Then, we have 
\begin{align} \label{alphasymmetry}
    \E\bigg[\int_0^T |\alpha_t^{N,i} - \alpha_t^i|^2 dt \bigg] =  \E\bigg[\int_0^T |\alpha_t^{N,j} - \alpha_t^j|^2 dt \bigg], \quad \forall i,j = 1,\dots,N \end{align}
and 
\begin{align} \label{xsymmetry}
    \E\Big[\sup_{0 \leq t \leq T} |X_t^{N,i} - X_t^i|^2  \Big] =  \E\Big[\sup_{0 \leq t \leq T} |X_t^{N,j} - X_t^j|^2 \Big], \quad \forall i,j = 1,\dots,N.
\end{align}
\end{lemma}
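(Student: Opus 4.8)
The plan is to exploit the permutation symmetry of the $N$-player game together with the assumed uniqueness of the Nash equilibrium to show that both the equilibrium and the conditionally independent copies are \emph{exchangeable}, so that every index $i$ plays an interchangeable role. Fix a permutation $\pi$ of $\{1,\dots,N\}$ and define the map $T_\pi : \Omega^N \to \Omega^N$ that permutes the idiosyncratic coordinates according to $\pi$ while leaving the common-noise coordinate fixed, i.e. the measurable bijection determined by $\xi^i \circ T_\pi = \xi^{\pi(i)}$ and $W^i \circ T_\pi = W^{\pi(i)}$ for $i = 1,\dots,N$, together with $W^0 \circ T_\pi = W^0$. Since $(\xi^i, W^i)_{i=1}^N$ are i.i.d. and independent of $W^0$, the map $T_\pi$ preserves $\bP^N$; since it merely permutes the generators of $\bbF^N$, it also preserves the filtration, so that $\eta \circ T_\pi$ is $\sF^N_t$-measurable whenever $\eta$ is.

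First I would establish that the unique Nash equilibrium is exchangeable. Given any admissible profile $\bm\beta = (\beta^1,\dots,\beta^N) \in \sA_N^N$, let $\pi \cdot \bm\beta$ denote the profile with components $(\pi\cdot\bm\beta)^i := \beta^{\pi^{-1}(i)} \circ T_\pi$, which is again admissible by the measure- and filtration-preserving properties of $T_\pi$. A direct computation using $W^{\pi^{-1}(i)} \circ T_\pi = W^i$ and the permutation-invariance of the empirical measure $m^N_{\bX_t} \circ T_\pi = m^N_{\hat\bX_t}$ shows that if $\bX$ is the state process for $\bm\beta$, then $\hat X^i := X^{\pi^{-1}(i)} \circ T_\pi$ is the state process for $\pi\cdot\bm\beta$; composing the cost \eqref{nplayerobj} with the measure-preserving $T_\pi$ then yields the equivariance identity $J^{N,i}(\pi\cdot\bm\beta) = J^{N,\pi^{-1}(i)}(\bm\beta)$. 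Because this identity holds for every profile, it maps Nash equilibria to Nash equilibria: writing any unilateral deviation of $\pi\cdot\bm\beta$ at player $i$ as $\pi$ applied to a suitable deviation of $\bm\beta$ at player $\pi^{-1}(i)$, the Nash property of $\bm\beta$ transfers to $\pi\cdot\bm\beta$. Applying this to the unique equilibrium $\bm\alpha^N$ forces $\pi\cdot\bm\alpha^N = \bm\alpha^N$, i.e. $\alpha^{N,i} = \alpha^{N,\pi^{-1}(i)}\circ T_\pi$, and correspondingly $X^{N,i} = X^{N,\pi^{-1}(i)} \circ T_\pi$, for every $i$.

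Next I would treat the conditionally independent copies. By Definition \ref{def:condindcopies} there are measurable maps $\wt X$, $\wt \alpha$ with $X^i_t = \wt X_t(\xi^i, W^0_{[0,t]}, W^i_{[0,t]})$ and $\alpha^i_t = \wt\alpha_t(\xi^i, W^0_{[0,t]}, W^i_{[0,t]})$. Composing with $T_\pi$ and using $\xi^i \circ T_\pi = \xi^{\pi(i)}$, $W^i \circ T_\pi = W^{\pi(i)}$, $W^0 \circ T_\pi = W^0$ gives $X^{\pi^{-1}(i)} \circ T_\pi = X^i$ and $\alpha^{\pi^{-1}(i)}\circ T_\pi = \alpha^i$, exactly matching the transformation law obtained for the equilibrium. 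Hence the difference satisfies $X^{N,i}_t - X^i_t = (X^{N,\pi^{-1}(i)}_t - X^{\pi^{-1}(i)}_t)\circ T_\pi$, and likewise for the controls. Since $T_\pi$ preserves $\bP^N$, taking $\sup_{0\le t\le T} |\cdot|^2$ (resp. $\int_0^T |\cdot|^2\,dt$) and then expectations yields $\E[\sup_t |X^{N,i}_t - X^i_t|^2] = \E[\sup_t |X^{N,\pi^{-1}(i)}_t - X^{\pi^{-1}(i)}_t|^2]$ and the analogous identity for $\alpha$. As $\pi$ ranges over all permutations, $\pi^{-1}(i)$ ranges over all indices, which gives \eqref{xsymmetry} and \eqref{alphasymmetry}.

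I expect the main obstacle to be the rigorous justification in the second paragraph that $\hat X^i = X^{\pi^{-1}(i)}\circ T_\pi$ solves the state equation for the permuted profile, i.e. that stochastic integrals transform correctly under $T_\pi$. This is where working on the canonical space pays off: because $T_\pi$ is measure-preserving and carries the driving pair $(W^{\pi^{-1}(i)}, W^0)$ to the Brownian pair $(W^i, W^0)$ while preserving adaptedness, the It\^o integrals against the transformed drivers coincide $\bP^N$-a.s. with the transformed integrals, so the composed process solves the expected SDE. Everything else --- the measure- and filtration-preservation of $T_\pi$, the invariance of the empirical measure, and the bookkeeping of the permutation action on the cost functionals and on unilateral deviations --- is routine once the conventions are fixed.
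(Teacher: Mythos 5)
Your proof is correct and follows essentially the same route as the paper's: permutation symmetry of the game plus uniqueness of the Nash equilibrium forces exchangeability of $\bm\alpha^N$, which combined with the explicit coupling structure of the conditionally independent copies makes the expectations index-independent. The only difference is cosmetic --- you phrase the symmetry via the measure-preserving coordinate permutation $T_\pi$ on the canonical space, whereas the paper works with the measurable-map representations from Lemma \ref{lem:coupling}; if anything, your version spells out the equivariance of the state dynamics and cost functionals more carefully than the paper does.
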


\begin{proof}
Using (the reasoning leading to) Lemma \ref{lem:coupling}, it is straightforward to find a version of $\bm \alpha^N$ such that for each $t \in [0,T]$ and $i = 1,\dots,N$, we have 
\begin{align*}
    \alpha_t^{N,i} = \widehat{\alpha}^{N,i}_t\big(\boldsymbol{\xi}, W^0_{[0,t]},\mathbf{W}_{[0,t]}\big) 
\end{align*}
for some collection of measurable maps $\widehat{\alpha}^{N,i}_t :  (\R^n)^{N} \times \cC_t \times \cC_t^{N} \to \R^k$. Now for any permutation $\pi : \{1,\dots,N\} \to \{1,\dots.,N\}$, we can define a process $\bm \alpha^{\pi,N} = (\alpha^{\pi, N,1},\dots,\alpha^{\pi,N,N})$ by 
\begin{align*}
    \alpha^{\pi,N,i}_t = \widehat{\alpha}^{N,\pi(i)}_t\big(\boldsymbol{\xi}, W^0_{[0,t]},\mathbf{W}_{[0,t]}\big).
\end{align*} 
It is easy to see that 
\begin{align*}
    J^{N,i}\big( ( \bm \alpha^{\pi, N})^{-i}, \alpha) = J^{N, \pi(i)} \big( \bm \alpha^{- \pi(i)}, \alpha), 
\end{align*}
and so $\bm \alpha^{\pi,N}$ is a Nash equlibrium.
In particular, by uniqueness we have $\alpha_t^{N,i} = \alpha_t^{N,j}$ a.e. for each $t$, $i$, and $j$. Thus we can find a version of $\bm \alpha$ and a collection of maps $\wt{\alpha}_t^N$ such that 
\begin{align*}
    \alpha^{\pi,N,i}_t = \widehat{\alpha}^{N}_t \big(\xi^i,\xi^1,\dots,\xi^{i-1},\xi^{i+1},\dots,\xi^N,W^i_{[0,t]}, W^0_{[0,t]}, W^1_{[0,t]}, \dots, W^{i-1}_{[0,t]}, W^{i+1}_{[0,t]},\dots,W^N_{[0,t]}\big).
\end{align*}
In addition, we already know that the independent copies $(\alpha^i)_{i = 1,\dots,N}$ satisfy
\begin{align*}
    \alpha^i_t = \widehat{\alpha}_t\big(\xi^i, W^i_{[0,t]}, W^0_{[0,t]}\big)
\end{align*}
for some collection of measurable maps $\widehat{\alpha}_t$. Thus,
\begin{equation*}
    \E\bigg[\int_0^T |\alpha_t^{N,i} - \alpha_t^i|^2 dt \bigg]
\end{equation*}
is clearly independent of $i$.
 This establishes \eqref{alphasymmetry}. The proof of \eqref{xsymmetry} is very similar and so is omitted. 
\end{proof}

\subsubsection{Weak error estimates with common noise}

\begin{lemma} \label{lem:errorterms}
  Let $E$ be a Euclidean space, and let $\cU = \cU(x,m) : E \times \sP_2(\R^n)\to \R$ be a function of linear growth such that for each fixed $x$, the map $m\mapsto \cU(x,m)$ admits the derivatives
  \begin{align*}
  D_m \cU(x,m,p) = D_m\big[\cU(x,\cdot)](m,p), \quad D_{mm} D_x \cU(x,m,p,q) = D_{mm} \big[D_x \cU(x,\cdot)\big](m,p,q), 
\end{align*}
which are bounded and Lipschitz continuous, uniformly in $x$.
Then, for any $N$ i.i.d. random variables $(\xi^i)_{i = 1,\dots,N}$  with common law $m$,  we have
\begin{align*}
  \E\big[|\cU(x,m) - \cU(x,m_{\bm \xi}^N)|^2\big] \leq \frac{C}{N}(1 + |x|^2 + M_2(m)), 
\end{align*}
for each $x\in E$, where we set $m_{\bm \xi}^N := \frac{1}{N} \sum_{i = 1}^N \delta_{\xi^i}$ and $C$ is a constant independent of $N$ and $m$.
\end{lemma}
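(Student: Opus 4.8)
The plan is to fix $x \in E$, suppress it from the notation by writing $U(\cdot) := \cU(x,\cdot)$, and exploit the fact that by hypothesis the first and second Lions derivatives of $m \mapsto \cU(x,\cdot)$ are bounded and Lipschitz \emph{uniformly in $x$}, so that all constants produced below are independent of $x$. The starting point is the bias--variance decomposition
\begin{align*}
\E\big[|U(m) - U(m_{\bm\xi}^N)|^2 \big] = \Var\big(U(m_{\bm\xi}^N)\big) + \big(U(m) - \E[U(m_{\bm\xi}^N)]\big)^2,
\end{align*}
and the goal is to show that each term is $O(1/N)$, with the indicated dependence on $M_2(m)$ and $|x|$.

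For the variance, I would first record that boundedness of $D_m U$ makes $U$ Lipschitz with respect to $\cW_1$: writing $U(\sL(\eta^1)) - U(\sL(\eta^2)) = \int_0^1 \E[D_m U(\sL(\eta^s), \eta^s)\cdot(\eta^1 - \eta^2)]\,ds$ with $\eta^s = (1-s)\eta^2 + s\eta^1$ yields $|U(m^1) - U(m^2)| \le C\,\cW_1(m^1,m^2) \le C\,\cW_2(m^1,m^2)$. I would then apply the Efron--Stein inequality to $f(\xi^1,\dots,\xi^N) = U(m_{\bm\xi}^N)$: replacing $\xi^i$ by an independent copy $\tilde\xi^i$ moves the empirical measure by at most $\tfrac1N|\xi^i - \tilde\xi^i|$ in $\cW_1$, so the corresponding increment of $f$ is at most $\tfrac{C}{N}|\xi^i - \tilde\xi^i|$. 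Summing the $N$ i.i.d. contributions, with $\E|\xi^i - \tilde\xi^i|^2 \le 2M_2(m)$, gives $\Var(U(m_{\bm\xi}^N)) \le \tfrac{C}{N}(1 + M_2(m))$.

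The harder term is the bias $U(m) - \E[U(m_{\bm\xi}^N)]$, which is precisely where the \emph{second} derivative is essential (the first derivative alone only reproduces the suboptimal $\cW_2$-rate of empirical measures, which deteriorates with dimension). Here I would Taylor expand $U$ along the segment $m_s = (1-s)m + s\,m_{\bm\xi}^N$ to second order in the linear functional derivative. The first-order term $\int \tfrac{\delta U}{\delta m}(m,y)\,d(m_{\bm\xi}^N - m)(y)$ has mean zero because $\E[m_{\bm\xi}^N] = m$; the second-order term has the form $\tfrac12\iint \tfrac{\delta^2 U}{\delta m^2}(m,y,y')\,d(m_{\bm\xi}^N - m)(y)\,d(m_{\bm\xi}^N - m)(y')$, whose expectation equals $\tfrac{1}{2N}\big(\int \tfrac{\delta^2 U}{\delta m^2}(m,y,y)\,dm - \iint \tfrac{\delta^2 U}{\delta m^2}(m,y,y')\,dm\,dm\big) = O(1/N)$ by the product structure of $\E[(m_{\bm\xi}^N - m)^{\otimes 2}]$, while the remainder is controlled by the Lipschitz modulus of the second derivative together with $\E[\cW_1^2(m_{\bm\xi}^N, m)]$. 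Combining gives $|U(m) - \E[U(m_{\bm\xi}^N)]| \le \tfrac{C}{N}(1 + M_2(m))$, so its square is $O(1/N^2)$, and adding the two estimates yields the claim.

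I expect the bias/second-order analysis to be the main obstacle, both in organizing the remainder estimate and in passing rigorously between the Lions and flat derivatives. Since this is exactly the content of the weak propagation-of-chaos estimate of \cite[Theorem 2.11]{CST} under precisely the regularity imposed here, the cleanest route in the write-up is to verify that $\cU(x,\cdot)$ meets the hypotheses of that theorem uniformly in $x$ and invoke it directly, rather than reproducing the full expansion; the factor $(1 + |x|^2 + M_2(m))$ then follows from the uniform-in-$x$ constants together with the linear growth of $\cU$.
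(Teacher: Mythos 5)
Your argument is correct, but it takes a genuinely different route from the paper. The paper does \emph{not} use the bias--variance decomposition: it writes
$\E[|\cU(x,m)-\cU(x,m^N_{\bm\xi})|^2]=\big(\E[|\cU(x,m^N_{\bm\xi})|^2]-|\cU(x,m)|^2\big)+2\,\cU(x,m)\big(\cU(x,m)-\E[\cU(x,m^N_{\bm\xi})]\big)$,
so that \emph{both} terms are weak-error (bias) quantities, and then applies \cite[Theorem 2.11]{CST} twice --- once to the squared functional $|\cU(x,\cdot)|^2$ and once to $\cU(x,\cdot)$ itself. The price of that route is having to compute, via the product rule, the two Lions derivatives of $m\mapsto|\cU(x,m)|^2$; the factor $\cU(x,m)$ of linear growth appearing there is precisely what produces the $(1+|x|+M_2^{1/2}(m))$ growth of the second derivative and hence the $|x|^2$ in the final bound. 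Your route trades that computation for the Efron--Stein inequality: the variance term needs only the boundedness of $D_m\cU$ (first derivative, via the $\cW_1$-Lipschitz estimate), the second derivative enters only through the bias of $\cU$ itself, and you never differentiate the square --- indeed your bound does not even need the $|x|^2$ term, so it is slightly sharper. Both proofs ultimately lean on \cite[Theorem 2.11]{CST} for the $O(1/N)$ bias, and you are right that this is the only genuinely delicate step: your self-contained sketch of the remainder as being ``controlled by $\E[\cW_1^2(m^N_{\bm\xi},m)]$'' would, taken literally, give only the dimension-dependent empirical rate $N^{-2/n}$, so deferring to the CST expansion (which avoids exactly this) is the right call. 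One small correction for the write-up: the bias bound delivered by \cite[Theorem 2.11]{CST} under the hypothesis $|\tfrac{\delta^2}{\delta m^2}\cU|\le C(|p|+|q|)$ is $\tfrac{C}{N}(1+M_2^{1/2}(m))$, not $\tfrac{C}{N}(1+M_2(m))$; this matters because the square of the latter would contribute $M_2(m)^2/N^2$, which is not dominated by $\tfrac{C}{N}(1+|x|^2+M_2(m))$ with $C$ uniform in $m$, whereas the square of the former is $\tfrac{C}{N^2}(1+M_2(m))$ and fits inside the claimed estimate.
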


\begin{proof}
First, we expand the square
\begin{align*}
\E[|& \cU(x,m) - \cU(x,m_{\bm \xi}^N)|^2] = \E[ |\cU(x,m)|^2 + |\cU(x,m_{\bm \xi}^N)|^2 - 2 \cU(x,m) \cU(x,m_{\bm \xi}^N) ] \\
&=  \E[|\cU(x, m_{\bm \xi}^N)|^2] - | \cU(x,m)|^2 + 2\cU(x,m) \big(\cU(x,m) - \E[\cU(x,m_{\bm \xi}^N)] \big) \eqqcolon e_1 + e_2.
\end{align*}
The goal is now to estimate $e_1$ and $e_2$ using \cite[Theorem 2.11]{CST}. 
We start by noticing that under Assumption \ref{assump:cst}, $m \mapsto |\cU(x,m)|^2$ is twice differentiable in the measure argument, with derivatives given explicitly by
\begin{align} \label{derivsquare}
&D_m \big[ |\cU(x,\cdot)|^2\big] (m,p) = 2 U(x,m) D_m \cU(x,m,p), \nonumber \\
&D_{mm} \big[ |\cU(x,\cdot)|^2 \big] (m,p,q) = 2 D_m \cU(x,m,p) \otimes D_m \cU(x,m,q) + \cU(x,m) D_{mm} \cU(x, m,p,q). 
\end{align}
We note that the following bounds follow easily:
\begin{align} \label{dmbounds}
D_m \big[ |\cU(x,\cdot)|^2\big] (m,p) &\leq C\big(1 + |x| + M_2^{1/2}(m)\big) \nonumber \\
D_{mm} \big[ | \cU(x,\cdot)|^2 \big] (m,p,q) &\leq C\big(1 + |x| + M_2^{1/2}(m)\big).
\end{align}
 Suppose now that $|\cU|^2$ admits linear functional derivatives
\begin{align*}
\frac{\delta}{\delta m}[|\cU(x, \cdot)|^2](m,p), \quad \frac{\delta^2}{\delta m^2}[| \cU(x, \cdot)|^2](m,p,q)
\end{align*}
as defined in \cite{CST} which satisfy the fundamental relationship
\begin{align} \label{linderivrelation}
D_p \frac{\delta}{\delta m}[|\cU(x, \cdot)|^2](m,p) &= D_m |\cU|^2(x,m,p), \nonumber \\
D_p D_q \frac{\delta^2}{\delta m^2}[|\cU|^2(x, \cdot)](m,p,q) &= D_{mm} |\cU|^2(x,m,p,q).
\end{align}
Combining \eqref{dmbounds} and \eqref{linderivrelation}, we see that
\begin{align*}
\Big|\frac{\delta}{\delta m^2}[|\cU|^2(x, \cdot)](m,p,q) \Big| &\leq C\big(1 + |x| + M^{1/2}_2(m)\big)\big( |p| +|q| \big) \\
&\leq C\big(1 + |x|^2 + |p|^2 + |q|^2 + M_2(m)\big). 
\end{align*}
From (the proof of) \cite[Theorem 2.11]{CST}, it is now easy to conclude. Indeed, the explicit expression (2.21) for the error in the proof of \cite[Theorem 2.11]{CST} is easily bounded by $C(1 + |x|^2 + M_2(m))$ provided that $|\frac{\delta^2U}{\delta m^2}| \leq C(1 + |x|^2 + |p|^2 + |q|^2)$, so applying this reasoning with $U(m) = |\cU(x,m)|^2$ gives 
\begin{align*}
e_1 \leq C(1 + |x|^2 + M_2(m)),
\end{align*}
as desired. 

To handle the error term $e_2$ we use the same reasoning as above to control $\frac{\delta^2}{\delta m^2} \cU$. More precisely, the boundedness of $D_{mm} \cU$ gives the bound
\begin{align*}
\Big|\frac{\delta^2}{\delta m^2} [\cU(x,\cdot)](m,p,q) \Big| \leq C \big(|p| + |q| \big), 
\end{align*}
and so (equation (2.21) in the proof of) \cite[Theorem 2.11]{CST} gives
\begin{align*}
|\cU(x,m) - \E[\cU(x, m_{\bm \xi}^N)]| \leq \frac{C}{N} \big(1 + M_2^{1/2}(m)\big). 
\end{align*}
Combine this with $|\cU(x,m)| \leq C(1 + |x| + M_2(m)^{1/2})$ to get 
\begin{align*}
e_2 \leq C(1 + |x|^2 + M_2(m)). 
\end{align*}
We have now completed the proof under the additional assumption that $|D\cU(x, \cdot)|^2$ admits linear functional derivatives of order two which satisfy \eqref{linderivrelation}. But in fact this holds true under Assumption \ref{assump:cst}, thanks to \cite[Proposition 5.48]{cardelbook1}. Indeed, this result can be used (together with the smoothness assumptions on $D_m\cU$, $D_{mm} \cU$ to guarantee that $ \cU$ admits linear functional derivatives of order two satisfying
\begin{align} \label{linderivrelation1}
D_p \frac{\delta}{\delta m}[\cU(x, \cdot)](m,p) &= D_m  \cU(x,m,p), \nonumber \\
D_p D_q \frac{\delta^2}{\delta m^2} \cU(x, \cdot)](m,p,q) &= D_{mm} \cU(x,m,p,q)
\end{align}
and then the result follows. This completes the proof.
\end{proof}

\subsection{Proof of Theorem \ref{thm:pocdriftcontrol}}
\label{subsec:proofmaindrift}

This section is devoted to a proof of Theorem \ref{thm:pocdriftcontrol}, our main convergence result in the special case of linear drift control. We start with a key estimate which identifies the error term we must control in order to obtain Theorem \ref{thm:pocdriftcontrol}.

\begin{proposition} \label{prop:mainestdrift}
Let Assumptions \ref{assump:driftcontrolreg} and \ref{assump:driftcontrolstruct} hold. Let $\bm \alpha^N = (\alpha^{N,1},\dots,\alpha^{N,N})$ be a Nash equilibrium for the $N$-player game, and $(m,\alpha)$ be a MFE. Then there exist constants $N_0 \in \N$, $C  > 0$ such that for each $N \geq N_0$, we have 
\begin{align}
\label{eq:alpha.bound}
  \sum_{ i } \E\bigg[\int_0^T |\alpha^{N,i}_t - \alpha^i_t|^2 dt\bigg] \leq C \E \bigg[ \int_0^T  \sum_i |E_t^{F,i}|^2 dt +  \sum_i |E^{G,i}|^2 \bigg], 
\end{align}
where 
$(\alpha^i)_{i = 1,\dots,N}$ are conditionally independent copies of the state control $\alpha$  and\begin{align}
\label{eq:def.EF}
  &E_t^{F,i} = \frac{1}{N} D_m F(X_t^{i}, \ov{m}_t^N, X_t^i) + D_x F(X_t^i, \ov{m}_t^N) -  D_x F(X_t^i, \ov{m}_t),   \\
  \vspace{.1cm}
  \label{eq:def.EG}
  &E^{G,i} = \frac{1}{N} D_m G(X_T^{i},\ov{m}_T^N, X_T^i) +D_x G(X_T^i, \ov{m}_T^N) - D_x G(X_T^i, \ov{m}_t), 
\end{align}
with $\ov{m}_t^N = \frac{1}{N} \sum_{i = 1}^{N} \delta_{X_t^i}$ and $\ov{m}_t = \cL(X^i_t\mid \cF^0_t)$.

As a consequence, we have
\begin{align} \label{penbound}
  \sum_{ i} \E\Big[\sup_{0 \leq t \leq T} |X^{N,i}_t - X^i_t|^2 \Big] \leq C \E \bigg[ \int_0^T  \sum_i |E_t^{F,i}|^2 dt +  \sum_i |E^{G,i}|^2 \bigg], 
\end{align}
where 
$\bX^N = (X^{N,1},\dots,X^{N,N})$ is the state process corresponding to $\bm \alpha^N$, $(X^i)_{i = 1,\dots,N}$ are conditionally independent copies of the state process $X$ corresponding to $\alpha$.

\end{proposition}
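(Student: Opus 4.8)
The plan is to prove the stability estimate \eqref{eq:alpha.bound} by a synchronous-coupling argument at the level of the FBSDEs, and then read off \eqref{penbound} as an immediate corollary. First I would invoke the maximum principle: by Remark \ref{rmk:fbsdedriftcontrol} the Nash equilibrium is characterized by a solution of \eqref{eq:nplayerfbsdedrift} with $\alpha^{N,i}_t=D_yH_0(X^{N,i}_t,Y^{N,i}_t)$, while the conditionally independent copies $(X^i,Y^i,Z^i,Z^{0,i})$ each solve the conditional McKean--Vlasov FBSDE \eqref{eq:mkvfbsdedrift} driven by $(\xi^i,W^0,W^i)$, with the \emph{same} conditional law $\ov m_t=\cL(X^i_t\mid\cF^0_t)$ for every $i$. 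Writing $\Delta X^i=X^{N,i}-X^i$, $\Delta Y^i=Y^{N,i}-Y^i$, $\Delta\alpha^i=\alpha^{N,i}-\alpha^i$, the crucial simplification of the constant-volatility case is that the martingale parts of $X^{N,i}$ and $X^i$ coincide (both equal $\Sigma\,dW^i+\Sigma^0\,dW^0$), so $d\Delta X^i_t=\Delta\alpha^i_t\,dt$ has no martingale part and $\Delta X^i_t=\int_0^t\Delta\alpha^i_s\,ds$ since $\Delta X^i_0=0$.

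The heart of the argument is to study the process $\sum_i\Delta X^i_t\cdot\Delta Y^i_t$. By Itô's formula (the cross-variation vanishes since $\Delta X^i$ is of finite variation), integrating and taking expectations removes both the martingale term and the vanishing initial term, leaving
\begin{align*}
\E\Big[\sum_i\Delta X^i_T\cdot\Delta Y^i_T\Big]=\E\Big[\int_0^T\sum_i\big(\Delta Y^i_t\cdot\Delta\alpha^i_t-\Delta X^i_t\cdot\delta b^i_t\big)\,dt\Big],
\end{align*}
where $\delta b^i_t$ is the difference of the backward drivers. Using the first-order condition $Y^{N,i}=-D_aL_0(X^{N,i},\alpha^{N,i})$ (and likewise for the copy), and writing $D_xL_0^{N,i}=D_xL_0(X^{N,i}_t,\alpha^{N,i}_t)$ etc., the terms involving $D_xL_0$ and $D_aL_0$ combine into $-\big((D_xL_0^{N,i}-D_xL_0^i)\cdot\Delta X^i+(D_aL_0^{N,i}-D_aL_0^i)\cdot\Delta\alpha^i\big)\le-C_L|\Delta\alpha^i|^2$ by the joint convexity \eqref{jointconvex}. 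For the coupling terms I would decompose
\begin{align*}
D_xF(X^{N,i}_t,m^N_{\bX^N_t})-D_xF(X^i_t,\ov m_t)+\tfrac1N D_mF(X^{N,i}_t,m^N_{\bX^N_t},X^{N,i}_t)
\end{align*}
into (i) $D_xF(X^{N,i}_t,m^N_{\bX^N_t})-D_xF(X^i_t,\ov m^N_t)$, whose sum against $\Delta X^i$ is $\ge C_F\sum_i|\Delta X^i|^2$ by the projected displacement monotonicity (Lemma \ref{lem:dispmonotoneprojected}); (ii) an $O(1/N)$ remainder controlled by the Lipschitz continuity of $D_mF$; and (iii) exactly the error term $E^{F,i}_t$ of \eqref{eq:def.EF}. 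The terminal condition is handled identically with $G$ in place of $F$, producing the monotone contribution $C_G\sum_i|\Delta X^i_T|^2$, an $O(1/N)$ remainder, and the error term $E^{G,i}$ of \eqref{eq:def.EG}.

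Collecting these bounds and using the pathwise estimates $\sum_i|\Delta X^i_T|^2\le T\sum_i\int_0^T|\Delta\alpha^i|^2$ and $\int_0^T\sum_i|\Delta X^i_t|^2\,dt\le\frac{T^2}{2}\sum_i\int_0^T|\Delta\alpha^i|^2$ (both from $\Delta X^i_t=\int_0^t\Delta\alpha^i$), I would arrive at
\begin{align*}
C_L\,\E\Big[\int_0^T\sum_i|\Delta\alpha^i_t|^2\,dt\Big]&\le-(C_F\wedge0)\,\E\Big[\int_0^T\sum_i|\Delta X^i_t|^2\,dt\Big]-(C_G\wedge0)\,\E\Big[\sum_i|\Delta X^i_T|^2\Big]\\
&\quad+\frac{C}{N}\,\E\Big[\int_0^T\sum_i|\Delta\alpha^i_t|^2\,dt\Big]+R,
\end{align*}
where $R$ collects the $E^{F,i},E^{G,i}$ contributions. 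Bounding the first two terms on the right as above and rearranging, the coefficient multiplying $\E[\sum_i\int|\Delta\alpha^i|^2]$ becomes $C_L+(C_G\wedge0)T+\frac{(C_F\wedge0)T^2}{2}-\frac CN$, which by the structural condition \eqref{convexmonotone} is positive once $N\ge N_0$. The residual $R$ is handled by Young's inequality, which splits off a small multiple of $\sum_i\int|\Delta\alpha^i|^2$ (absorbed on the left) and leaves a constant times $\E[\int_0^T\sum_i|E^{F,i}_t|^2\,dt+\sum_i|E^{G,i}|^2]$, giving \eqref{eq:alpha.bound}; then \eqref{penbound} follows from $\sup_t|\Delta X^i_t|^2\le T\int_0^T|\Delta\alpha^i|^2$. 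The main obstacle is the bookkeeping in the decomposition (i)--(iii) together with the sign tracking when $C_F,C_G<0$, so that the $O(1/N)$ and Young remainders are genuinely absorbable and the constant of \eqref{convexmonotone} emerges; the probabilistic content (Itô plus monotonicity) is otherwise routine.
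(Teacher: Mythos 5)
Your proposal is correct and follows essentially the same route as the paper's proof: maximum principle plus synchronous coupling, Itô on $\sum_i \Delta X^i_t\cdot\Delta Y^i_t$ using the first-order condition $Y^{N,i}=-D_aL_0(X^{N,i},\alpha^{N,i})$, the decomposition of the coupling terms into a projected displacement-monotone part, an $O(1/N)$ Lipschitz remainder, and the error terms $E^{F,i},E^{G,i}$, followed by the conversion $\Delta X^i_t=\int_0^t\Delta\alpha^i_s\,ds$ and Young's inequality to extract the positive constant $C_L+(C_G\wedge 0)T+\tfrac{(C_F\wedge 0)T^2}{2}-\tfrac{C}{N}$. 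No gaps.
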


\begin{proof}


The proof is relatively similar to the proof of the uniqueness result, Lemma \ref{lem:uniqueness}. In particular, we split the argument into roughly the same three steps.
\newline \newline \noindent
\textit{Step 1 - Applying the maximum principle:}
In view of Lemma \ref{lem:mpmfggen} and the fact that the unique maximizer of $a \mapsto L_0(x,a) + a\cdot y$ is given by $D_yH_0(x,y)$, it follows that the MFE $(m,\alpha)$ satisfies
\begin{equation*}
  \alpha_t = D_yH_0(X_t, Y_t)
 \end{equation*} 
 where the tuple $(X,Y,Z,Z^{0}) $ satisfies \eqref{eq:mkvfbsdedrift}
in the sense described in Lemma \ref{lem:mpmfggen}. We fix $N \in \N$ and for $i \in \{1,\dots,N\}$ we define $X^i, Y^i$, $Z^i$, $Z^{i,0}$, $\alpha^i$ to be the conditionally independent copies of $X$, $Y$, $Z$, $Z^0$, $\alpha$. It is clear from the definition of the conditionally independent copies that 
\begin{align*}
\sL\Big(X^i,Y^i,Z^i,Z^{i,0}, \alpha^i, W^0,W^i \Big) = \sL\Big(X,Y,Z,Z^0, \alpha, W^0,W \Big), 
\end{align*}
from which it follows that the tuple $(X^i,Y^i,Z^i,Z^{i,0})$ satisfies the BSDE \eqref{eq:mkvfbsdedrift} but with $W^i$ replacing $W^0$, and that $\alpha_t^i = D_y H_0(X_t^i, Y_t^i)$.

Likewise, by Lemma \ref{lem:N.player.smp}, we know that for all $N\in \N$ and every $i \in \{1, \dots, N\}$ it holds
\begin{equation*}
  \alpha^{N,i}_t = D_yH_0(X^{N,i}_t, Y^{N,i}_t)
\end{equation*}
with $(\bX^N, \bY^N, \bZ^N) = \big( (X^{N,1},\dots,X^{N,N}), (Y^{N,1},\dots,Y^{N,N}), (\bZ^{N,1},\dots,\bZ^{N,N})\big)$
satisfying \eqref{eq:nplayerfbsdedrift}. 
\newline \newline \noindent 
\textit{Step 2 - Computing $d \Delta X_t^i \cdot \Delta Y_t^i$:} To prove Theorem \ref{thm:pocdriftcontrol} it remains to obtain a (forward-backward) propagation of chaos for the particle system \eqref{eq:nplayerfbsdedrift}. We set
\begin{align*}
\Delta X_t^i = X_t^{N,i} - X_t^i, \quad \Delta Y_t^i = Y_t^{N,i} - Y_t^i, \quad \Delta \alpha_t^i = \alpha_t^{N,i} - \alpha_t^i.
\end{align*}
Note that since $\alpha_t^{N,i}$ minimizes $a \mapsto  L_0(X_t^{N,i}, a) + a \cdot Y_t^{N,i}$, we get
\begin{align*}
Y_t^{N,i} = - D_a L_0(X_t^{N,i}, \alpha_t^{N,i}), \text{ and similarly }
Y_t^i = - D_a L_0(X_t^i, \alpha_t^i).  
\end{align*}
With these identities in hand, we can compute
\begin{align} \label{xydynamics}
  d \Delta X_t^i \cdot \Delta Y_t^i = - &\bigg(\Delta \alpha^i_t \cdot  \big(D_a L_0(X_t^{N,i}, \alpha_t^{N,i}) - D_a L_0(X_t^i, \alpha_t^i) \big) + \Delta X_t^i \cdot \big(D_x L(X_t^{N,i}, \alpha_t^{N,i}) - D_x L_0(X_t^i, \alpha_t^i) \big) \nonumber \\
  &+ \Delta X_t^i \cdot \big(D_x F(X_t^{N,i}, m_{\bX_t^N}^N) - D_x F(X_t^i, \ov{m}_t^N)\big) & \nonumber  \\
  &+ \frac{1}{N} \Delta X_t^i \cdot \big(D_m F(X_t^{N,i}, m_{\bX_t^N}^N, X_t^{N,i}) - D_m F(X_t^{i}, \ov{m}_t^N, X_t^{i})  \big) + \Delta X_t^i \cdot E_t^{F,i} \bigg) dt + dM_t^i, 
\end{align}
where $E_t^{F,i}$ is defined by \eqref{eq:def.EF}.
We can also write the terminal condition as
\begin{align*}
  \Delta X_T^i \cdot \Delta Y_T^i &= \Delta X_T^{i} \cdot \big(D_x G(X_T^{N,i}, m_{\bX_T^N}^N) - D_x G(X_T^i, \ov{m}_T^N) \big) \\
  &\quad+ \frac{1}{N} \Delta X_T^i \cdot \big(D_m G(X_T^{N,i}, m_{\bX_T^N}^N, X_T^{N,i}) - D_m G(X_T^{i}, \ov{m}_T^N, X_T^{i})  \big) \big) + \Delta X_T^i \cdot  E^{G,i}, 
\end{align*}
with $ E^{G,i}$ given by \eqref{eq:def.EG}.
Now we integrate \eqref{xydynamics}, sum over $i$, and take expectations to get
\begin{align*}
  -\E\bigg[& \int_0^T \sum_i \bigg(\Delta \alpha_t^i \cdot \big(D_a L_0(X_t^{N,i}, \alpha_t^{N,i}) - D_a L_0(X_t^i, \alpha_t^i) \big) + \Delta X_t^i \cdot \big(D_x L_0(X_t^{N,i}, \alpha_t^{N,i}) - D_x L_0(X_t^i, \alpha_t^i) \big) \nonumber \\
  & \quad \quad + \Delta X_t^i \cdot \big(D_x F(X_t^{N,i}, m_{\bX_t^N}^N) - D_x F(X_t^i, \ov{m}_t^N)\big) \\
  & +\frac{1}{N} \Delta X_t^i \cdot \big(D_m F(X_t^{N,i}, m_{\bX_t^N}^N, X_t^{N,i}) - D_m F(X_t^{i}, \ov{m}_t^N, X_t^{i})  \big)+ \Delta X_t^{N,i} \cdot E_t^{F,i} \bigg) dt \bigg] \\
  &= \E\Big[ \sum_i \Delta X_T^i \cdot \Delta Y_T^i\Big] 
  = \E\Big[\sum_i \Delta X_T^i \cdot\big(D_x G(X_T^{N,i}, m_{\bX_T^N}^N) - D_x G(X_T^{N,i}, \ov{m}_T^N) \big) \\
  & \qquad + \frac{1}{N} \Delta X_T^i \cdot \big(D_m G(X_T^{N,i}, m_{\bX_T^N}^N, X_T^{N,i}) - D_m G(X_T^{i}, \ov{m}_T^N, X_T^{i})  \big) \big)  +  \sum_i \Delta X_T^i \cdot E^{G,i}\Big]. 
\end{align*}
\textit{Step 3 - Using the monotonicity condition:}
Rearranging, and using the definitions of $C_L$, $C_{F}$, $C_{G}$ together with Lipschitz continuity of $D_m F$ and $D_m G$, we find that there is a constant $C$ independent of $N$ such that
\begin{align} \label{smallness}
C_L &\E \bigg[\int_0^T \sum_i |\Delta \alpha_t^i|^2 dt \bigg] \nonumber \\
&\leq \E \bigg[ \int_0^T \Big( -C_{F} \sum_i |\Delta X_t^i|^2 + \frac{C}{N} \sum_i |\Delta X_t^i|^2 + \sum_i |E_t^{F,i}||\Delta X_t^i|  
\Big) dt \nonumber  \\
&\qquad - C_{G} \sum_i |\Delta X_T^i|^2 +\frac{C}{N} \sum_i |\Delta X_T^i|^2 + \sum_i |E^{G,i}| |\Delta X_T^i| \bigg] \nonumber \\
&\leq -\Big((C_{G} \wedge 0)T + \frac{(C_{F} \wedge 0) T^2}{2} \Big) \E\bigg[\int_0^T \sum_i |\Delta \alpha_t^i|^2 dt\bigg] + \E \bigg[ \int_0^T \sum_i |E_t^{F,i}||\Delta X_t^i|  dt + \sum_i |E^{G,i}| |\Delta X_T^i| \bigg]  \nonumber \\
&\qquad + \frac{C}{N}\E\bigg[\int_0^T  \sum_i |\Delta X_t^i|^2 dt +  \sum_i |\Delta X_T^i|^2 \bigg]
\end{align}
where the last line follows from the fact that $\Delta X_t^i = \int_0^t \Delta \alpha_s^i ds$, and so 
\begin{align} \label{xalphabound}
  \E[|\Delta X_t^i|^2] = \E\bigg[\Big|\int_0^t  \Delta \alpha_s^i ds\Big|^2\bigg] \leq t \E\bigg[\int_0^t |\Delta \alpha_s^i|^2 ds\bigg]. 
\end{align}
From \eqref{smallness}, we conclude that 
\begin{align*}
\Big(C_L + (C_{G} \wedge 0) T &+ \frac{(C_{F} \wedge 0) T^2}{2}\Big) \E \bigg[ \int_0^T \sum_i |\Delta \alpha_t^i|^2 dt \bigg] \leq \E \bigg[ \int_0^T \sum_i |E_t^{F,i}||\Delta X_t^i|  dt + \sum_i |E^{G,i}| |\Delta X_T^i| \bigg] \\
&+ \frac{C}{N}\E\bigg[\int_0^T  \sum_i |\Delta X_t^i|^2 dt +  \sum_i |\Delta X_T^i|^2 \bigg].
\end{align*}
Next, we apply Young's inequality to find
\begin{align*}
\Big(C_L& + (C_{G} \wedge 0) T + \frac{(C_{F} \wedge 0) T^2}{2}\Big)\E \bigg[ \int_0^T \sum_i |\Delta \alpha_t^i|^2 dt \bigg] \\
&\leq \E \bigg[ \int_0^T \bigg( \frac{\delta}{2} \sum_i |\Delta X_t^i|^2 dt +  \frac{1}{2\delta}  \sum_i |E_t^{F,i}|^2 \bigg)dt +  \frac{\delta}{2} \sum_i  |\Delta X_T^i|^2 + \frac{1}{2\delta} |E^{G,i}|^2 \bigg] \\
&\quad + \frac{C}{N}\E\bigg[\int_0^T  \sum_i |\Delta X_t^i|^2 dt +  \sum_i |\Delta X_T^i|^2 \bigg] \\
&\leq \big(\frac{\delta T^2}{4} + \frac{\delta T}{2} \big) \E\bigg[ \int_0^T \sum_i |\Delta \alpha_t^i|^2 dt \bigg] + 
\frac{1}{2 \delta} \E \bigg[ \int_0^T  \sum_i |E_t^{F,i}|^2 dt +  \sum_i |E^{G,i}|^2 \bigg] \\
&\quad + \frac{C}{N}\E\bigg[\int_0^T  \sum_i |\Delta \alpha_t^i|^2 dt\bigg].
\end{align*}
By choosing $\delta$ small enough (independently of $N$), we conclude that because $C_L + (C_G \wedge 0) T + \frac{(C_F \wedge 0) T^2}{2} > 0$, there is a constant $C$ such that for all $N$ large enough, 
\begin{align*}
\E\bigg[\int_0^T \sum_i |\Delta \alpha_t^i|^2 dt \bigg] \leq C \E \bigg[ \int_0^T  \sum_i |E_t^{F,i}|^2 dt +  \sum_i |E^{G,i}|^2 \bigg],
\end{align*}
and thereby \eqref{penbound} easily follows.
\end{proof}

\begin{proof}[Proof of Theorem \ref{thm:pocdriftcontrol}]
We fix an $N$, and note that without loss of generality we can assume that $N$ is large enough that the conclusion of Lemma \ref{lem:uniqueness} applies.
  Thanks to Proposition \ref{prop:mainestdrift}, we need only to estimate the error terms $E_t^{F,i}$ and $E^{G,i}$ appearing in the statement of the same Proposition.  Recalling the definition, we have
\begin{align*}
  \E\bigg[\int_0^T |E_t^{F,i}|^2 dt\bigg] &\leq 2 \E\bigg[\int_0^T |\frac{1}{N} D_m F(X_t^{i}, \ov{m}_t^N, X_t^{i})|^2 dt \bigg] + 2 \E\bigg[\int_0^T |D_x F(X_t^i, \ov{m}_t^N) - D_x F(X_t^i, \ov{m}_t)|^2 dt \bigg] \\
  &\leq \frac{C}{N^2} + 2 \E\bigg[\int_0^T|D_x F(X_t^i, \ov{m}_t^N) - D_x F(X_t^i, \ov{m}_t)|^2dt\bigg].
\end{align*}
We used here the fact that $D_m F$ is uniformly Lipschitz and the square integrability of $X^i$.
Now the goal becomes to bound the quantity
\begin{align*}
\E\big[|D_x F(X_t^i, \ov{m}_t^N) - D_x F(X_t^i, \ov{m}_t)|^2\big]. 
\end{align*}
First, set $\ov{m}_t^{N,-i} = \frac{1}{N-1} \sum_{j \neq i} \delta_{X_t^j}$. It is easy to check that 
\begin{align*}
\E\big[\cW^2_2(\ov{m}_t^N, \ov{m}_t^{N,-i}) \big] \leq \frac{C}{N} \E[|X_t^1|^2] \leq \frac{C}{N} 
\end{align*}
and so 
\begin{align*}
\E\big[|D_x F(X_t^i, \ov{m}_t^N) - D_x F(X_t^i, \ov{m}_t)|^2\big] \leq \frac{C}{N} + 2\E\big[|D_x F(X_t^i, \ov{m}_t^{N,-i}) - D_x F(X_t^i, \ov{m}_t)|^2\big].  
\end{align*}
Next we notice that 
\begin{align*}
\E\big[|D_x F(X_t^i&, \ov{m}_t^{N,-i}) - D_x F(X_t^i, \ov{m}_t)|^2\big] = \E\Big[ \E\big[|D_x F(X_t^i, \ov{m}_t^{N,-i}) - D_x F(X_t^i, \ov{m}_t)|^2 | \sF_t^{0} \big] \Big] \\
&= \E\bigg[\int_{\R^n} \E\big[|D_x F(x, \ov{m}_t^{N,-i}) - D_x F(x, \ov{m}_t)|^2 | \sF_t^0 \big] \ov{m}_t(dx) \bigg] \\
& \leq \E\bigg[ \int_{\R^n} \frac{C}{N} \big(1 + |x|^2 + M_2(\ov{m}_t) \big) \ov{m}_t(dx) \bigg] \\
& \leq \frac{C}{N}\E\big[\big(1 + M_2(\ov{m}_t)\big) \big] \leq \frac{C}{N},
\end{align*}
where we have used the conditional independence of $(X^{i})_{i = 1,\dots,N}$ and Lemma \ref{lem:errorterms}. This shows that we have 
\begin{align*}
\E[|E_t^{F,i}|^2] \leq \frac{C}{N}.
\end{align*}
Obtaining a similar bound for $\E[|E^{G,i}|^2]$ by the same argument and then plugging these bounds into \eqref{eq:alpha.bound} and \eqref{penbound} gives 
\begin{align}
  \sum_i\E\Big[\sup_{t\in [0,T]}|X^{N,i}_t-X^i_t|^2\Big]\le C\quad \mathrm{and}\quad \E\bigg[\int_0^T \sum_i |\Delta \alpha_t^i|^2 dt \bigg] \leq C, 
\end{align}
and so by Lemma \ref{lem:exchangeable} the proof is complete.
\end{proof}

\subsection{Proof of Theorem \ref{thm:pocaffine}}
\label{subsec:proofaffine}

\begin{proposition} \label{prop:mainestaffine}
 Let Assumptions \ref{assump:affinereg} and \ref{affinestruct} hold, and suppose that for each $N$, $\bm\alpha^N = (\alpha^{N,1},\dots,\alpha^{N,N})$ is a Nash equilibrium for the $N$-player game, and that $(m,\alpha)$ be a MFE. Then there exist constants $N_0 \in \N$, $C  > 0$ such that for each $N \geq N_0$,
\begin{align}
\label{eq:alpha.bound2}
  \sum_{ i } \E\bigg[\int_0^T |\alpha^{N,i}_t - \alpha^i_t|^2 dt\bigg] \leq C \E \bigg[ \int_0^T  \sum_i |E_t^{F,i}|^2 dt +  \sum_i |E^{G,i}|^2 \bigg], 
\end{align}
where 
$(\alpha^i)_{i = 1,\dots,N}$ are conditionally independent copies of the state control $\alpha$.
As a consequence, we have
\begin{align} \label{penbound2}
  \sum_{ i} \E\Big[\sup_{0 \leq t \leq T} |X^{N,i}_t - X^i_t|^2 \Big] \leq C \E \bigg[ \int_0^T  \sum_i |E_t^{L,i}|^2 dt +  \sum_i |E^{G,i}|^2 \bigg], 
\end{align}
where 
$\bX^N = (X^{N,1},\dots,X^{N,N})$ is the state process corresponding to $\bm \alpha^N$, $(X^i)_{i = 1,\dots,N}$ are conditionally independent copies of the state process $X$ corresponding to $\alpha$, and
\begin{align}
\label{eq:def.errorsaffine}
  &E_t^{L,i} =  | \frac{1}{N} D_m L(X_t^{i}, \alpha_t^{i}, \ov{m}_t^N, X_t^{i}) + D_x L(X_t^i,\alpha_t^i, \ov{m}_t^N) -  D_x L(X_t^i,\alpha_t^i, \ov{m}_t)|  \nonumber \\ &\qquad + |D_a L(X_t^i,\alpha_t^i, \ov{m}_t^N) -  D_a L(X_t^i, \alpha_t^i, \ov{m}_t)|,   \\
  \vspace{.1cm}
  \label{eq:def.EG2}
  &E^{G,i} = \frac{1}{N} D_m G(X_T^{i}, \ov{m}_T^N. X_T^{i}) +D_x G(X_T^i, \ov{m}_T^N) - D_x G(X_T^i, \ov{m}_T), 
\end{align}
with $\ov{m}_t^N = \frac1N\sum_{i = 1}^{N} \delta_{X_t^i}$ and $\ov{m}_t = \cL(X^i_t\mid \cF^0_t)$.
\end{proposition}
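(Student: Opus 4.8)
The plan is to follow the same three-step strategy used in the proof of Proposition~\ref{prop:mainestdrift} and in Case~3 of Lemma~\ref{lem:uniqueness}, now carefully tracking the extra $Z$ and $Z^0$ terms produced by the controlled volatility. \textbf{Step 1 (maximum principle and coupling).} By Lemma~\ref{lem:mpmfggen} and Remark~\ref{rmk:fbsdeaffinemfe}, the MFE $(m,\alpha)$ is characterized by a solution $(X,Y,Z,Z^0)$ of \eqref{eq:nplayerfbsdeaffinemfe} with $\alpha_t=\widehat\alpha(X_t,Y_t,Z_t,Z_t^0,m_t)$; I would fix $N$ and pass to the conditionally independent copies $(X^i,Y^i,Z^i,Z^{i,0},\alpha^i)$, which solve the same system with $W^i$ in place of $W$ and with measure argument $\ov m_t=\cL(X_t^i\mid\cF_t^0)$. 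By Lemma~\ref{lem:N.player.smp} and Remark~\ref{rmk:fbsdeaffine} the Nash equilibrium is characterized by \eqref{eq:nplayerfbsdeaffine}.

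\textbf{Step 2 (the It\^o computation).} Writing $\Delta X_t^i=X_t^{N,i}-X_t^i$ and similarly $\Delta Y_t^i,\Delta\alpha_t^i,\Delta Z_t^i=Z_t^{N,i,i}-Z_t^i,\Delta Z_t^{i,0}=Z_t^{N,i,0}-Z_t^{i,0}$, I would compute $d(\Delta X_t^i\cdot\Delta Y_t^i)$. The forward part of $\Delta X^i$ carries coefficients $B_1\Delta\alpha^i+B_2\Delta X^i$ (drift), $\Sigma_1\Delta\alpha^i+\Sigma_2\Delta X^i$ (against $W^i$) and $\Sigma_1^0\Delta\alpha^i+\Sigma_2^0\Delta X^i$ (against $W^0$), with no measure-dependent error because $b,\sigma,\sigma^0$ are independent of $m$; the backward drift of $\Delta Y^i$ contains $B_2^\top\Delta Y^i+\Sigma_2^\top\Delta Z^i+(\Sigma_2^0)^\top\Delta Z^{i,0}$ plus the running-cost discrepancy. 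The key algebraic observation is that the $B_2,\Sigma_2,\Sigma_2^0$ contributions cancel identically against those arising from $\Delta Y^i\cdot d\Delta X^i$ and from the covariation $d[\Delta X^i,\Delta Y^i]_t=\big((\Sigma_1\Delta\alpha^i+\Sigma_2\Delta X^i)\cdot\Delta Z^i+(\Sigma_1^0\Delta\alpha^i+\Sigma_2^0\Delta X^i)\cdot\Delta Z^{i,0}\big)\,dt$, while the remaining $B_1,\Sigma_1,\Sigma_1^0$ contributions combine, via the first-order optimality relations \eqref{dacomp}--\eqref{dacomp2} (which hold for both systems), into $-\Delta\alpha^i\cdot\big(D_aL(X^{N,i},\alpha^{N,i},m_{\bX_t^N}^N)-D_aL(X^i,\alpha^i,\ov m_t)\big)$. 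After these cancellations the drift of $\Delta X^i\cdot\Delta Y^i$ reduces to
\[
-\Delta X_t^i\cdot\big(D_xL^{N,i}-D_xL^i\big)-\Delta\alpha_t^i\cdot\big(D_aL^{N,i}-D_aL^i\big)-\tfrac1N\,\Delta X_t^i\cdot D_mL^{N,i},
\]
in close analogy with \eqref{xydynamicsunqiueness2}, where the superscripts $N,i$ and $i$ denote evaluation at $(X_t^{N,i},\alpha_t^{N,i},m_{\bX_t^N}^N)$ and $(X_t^i,\alpha_t^i,\ov m_t)$.

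\textbf{Step 3 (monotonicity and error control).} Integrating from $0$ to $T$ (using $\Delta X_0^i=0$), summing over $i$ and taking expectations, I would insert the intermediate empirical measure $\ov m_t^N=\frac1N\sum_j\delta_{X_t^j}$: the differences $D_xL(X^{N,i},\alpha^{N,i},m_{\bX_t^N}^N)-D_xL(X^i,\alpha^i,\ov m_t^N)$ and $D_aL(X^{N,i},\alpha^{N,i},m_{\bX_t^N}^N)-D_aL(X^i,\alpha^i,\ov m_t^N)$, paired with $\Delta X^i$ and $\Delta\alpha^i$, are exactly the quantity bounded below by $C_L\sum_i|\Delta\alpha^i|^2$ in Lemma~\ref{lem:dispmonotoneprojectedaffine} (since $m_{\bX_t^N}^N$ and $\ov m_t^N$ are the empirical measures of $\bX_t^N$ and $(X_t^1,\dots,X_t^N)$). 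The leftover measure-displacement pieces, together with the $\frac1N D_mL$ term, assemble precisely into the error term $E_t^{L,i}$ of \eqref{eq:def.errorsaffine}, up to an $O(1/N)$ Lipschitz correction from replacing $(X^{N,i},\alpha^{N,i})$ by $(X^i,\alpha^i)$ in the $\frac1N D_mL$ term. The terminal term is handled identically using displacement monotonicity of $G$ (Lemma~\ref{lem:dispmonotoneprojected}, giving a nonnegative contribution) and the error $E^{G,i}$ of \eqref{eq:def.EG2}. Collecting everything yields
\[
C_L\,\E\Big[\int_0^T\sum_i|\Delta\alpha_t^i|^2\,dt\Big]\le\E\Big[\int_0^T\sum_i|E_t^{L,i}|\big(|\Delta X_t^i|+|\Delta\alpha_t^i|\big)\,dt\Big]+\E\Big[\sum_i|E^{G,i}|\,|\Delta X_T^i|\Big]+\frac{C}{N}\,\E\Big[\int_0^T\sum_i|\Delta X_t^i|^2\,dt+\sum_i|\Delta X_T^i|^2\Big].
\]
Unlike the constant-volatility case, I cannot use $\Delta X_t^i=\int_0^t\Delta\alpha_s^i\,ds$; instead I would invoke the standard linear-SDE stability bound $\E[\sup_{t\le T}|\Delta X_t^i|^2]\le C\,\E[\int_0^T|\Delta\alpha_t^i|^2\,dt]$ (already used in Case~3 of Lemma~\ref{lem:uniqueness}) to convert all $\Delta X^i$-terms into $\Delta\alpha^i$-terms, then apply Young's inequality with a small parameter and take $N\ge N_0$ large to absorb the $|\Delta\alpha^i|^2$ terms into the left side, giving \eqref{eq:alpha.bound2}; the state estimate \eqref{penbound2} follows from the same stability bound.

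\textbf{Main obstacle.} The delicate point is the algebra in Step~2: one must verify that the $Z,Z^0$ terms introduced by the controlled volatility cancel cleanly, leaving only the $(D_xL,D_aL)$ pairing that appears in the structural condition \eqref{affinestruct}. This is precisely why the affine-case monotonicity hypothesis can be phrased in terms of $L$ alone rather than the full Hamiltonian, and it is where the affineness of $b,\sigma,\sigma^0$ and the optimality relations \eqref{dacomp}--\eqref{dacomp2} are essential; the replacement of the trivial $\Delta X$–$\Delta\alpha$ identity by the SDE stability estimate is the only other genuinely new ingredient relative to the constant-volatility argument.
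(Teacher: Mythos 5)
Your proposal is correct and follows essentially the same route as the paper's proof: the maximum-principle characterizations, the It\^o computation of $\Delta X^i\cdot\Delta Y^i$ with the first-order conditions \eqref{dacomp}--\eqref{dacomp2} used to reduce everything to the $(D_xL,D_aL)$ pairing, the insertion of the intermediate empirical measure so that Lemma \ref{lem:dispmonotoneprojectedaffine} and displacement monotonicity of $G$ apply, and the replacement of the trivial $\Delta X=\int\Delta\alpha$ identity by the linear-SDE stability bound \eqref{xalphaboundaffine} before absorbing via Young's inequality for $N$ large. The cancellation of the $B_2,\Sigma_2,\Sigma_2^0$ terms that you flag as the main obstacle is exactly what the paper's Step 2 computation carries out.
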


\begin{proof}
The proof is very similar to that of Proposition \ref{prop:mainestdrift}. 
\newline  \newline 
\textit{Step 1 - Applying the maximum principle:} We note that $(m,\alpha)$ satisfies
\begin{equation*}
  \alpha_t = \widehat{\alpha}(X_t,Y_t,Z_t,Z_t^0, m_t)
 \end{equation*} 
 where the tuple $(X,Y,Z,Z^{0}) $ satisfies \eqref{eq:nplayerfbsdeaffine}
in the sense described in Lemma \ref{lem:mpmfggen}. We fix $N \in \N$ and for $i \in \{1,\dots,N\}$ we define $X^i, Y^i$, $Z^i$, $Z^{i,0}$, $\alpha^i$ to be the conditionally independent copies of $X$, $Y$, $Z$, $Z^0$, $\alpha$. We note that once again it is clear that the tuple $(X^i,Y^i,Z^i,Z^{i,0})$ satisfies the BSDE \eqref{eq:nplayerfbsdeaffine} but with $W^i$ replacing $W$, and that $\alpha_t^i = \widehat{\alpha}(X_t^i,Y_t^i,Z_t^i,Z_t^{i,0}, \sL(X_t^i | \sF_t^0))$.

Likewise, by Lemma \ref{lem:N.player.smp}, we know that for all $N\in \N$ and every $i \in \{1, \dots, N\}$ it holds
\begin{equation*}
  \alpha_t^{N,i} = \widehat{\alpha}(X_t^{N,i},Y_t^{N,i},Z_t^{N,i,i},Z_t^{N,i,0}, m_{\bX_t^N}^N)
\end{equation*}
with $(\bX^N, \bY^N, \bZ^N) = \big( (X^{N,1},\dots,X^{N,N}), (Y^{N,1},\dots,Y^{N,N}), (\bZ^{N,1},\dots,\bZ^{N,N})\big)$
satisfying \eqref{eq:nplayerfbsdeaffine}. 
\newline \newline  \noindent 
\textit{Step 2: Computing $d \Delta X_t^i \cdot \Delta Y_t^i$:} We again set
\begin{align*}
\Delta X_t^i = X_t^{N,i} - X_t^i, \quad \Delta Y_t^i = Y_t^{N,i} - Y_t^i, \quad \Delta \alpha_t^i = \alpha_t^{N,i} - \alpha_t^i. 
\end{align*}
Using the fact that $\alpha_t^i$ minimizes 
\begin{align*}
a \mapsto L(X_t^i, a, m_t) + y \cdot \big(B_1 a\big) + z \cdot \big( \Sigma_1 z \big) + z^0 \cdot \big(\Sigma_1^0 z^0 \big),  
\end{align*}
we deduce that 
\begin{align*}
D_a L(X_t^i, \alpha_t^i, m_t) = - \Big(B_1^\top Y_t^i + \Sigma_1^\top Z_t^i + (\Sigma_1^0)^\top Z_t^{i,0} \Big)
\end{align*}
and a similar argument gives 
\begin{align*}
D_a L(X_t^{N,i}, \alpha_t^{N,i}, m_{\bX_t^N}^N) = - \Big(B_1^\top Y_t^{N,i} + \Sigma_1^\top Z_t^{N,i,i} + (\Sigma_1^0)^\top Z_t^{N,i,0} \Big).
\end{align*}
Using these identities, we can write the dynamics of the process $\Delta X_t^i \cdot \Delta Y_t^i$ as
\begin{align} \label{firstxycompaffine}
  &d \Delta X_t^i \cdot \Delta Y_t^i = - \Big( \Delta X_t^i \cdot \big(D_x L(X_t^{N,i}, \alpha_t^{N,i}, m_{\bX_t^N}^N) - D_x L(X_t^{i}, \alpha_t^{i}, \ov{m}_t^N)\big) \nonumber  \\ 
  \nonumber &\qquad+ \Delta \alpha_t^i \cdot \big(D_a L(X_t^{N,i}, \alpha_t^{N,i}, m_{\bX_t^N}^N) - D_a L(X_t^{i}, \alpha_t^{i}, \ov{m}_t^N) \\
  &\qquad + \frac{1}{N} \Delta X_t^i \cdot \big(D_m L(X_t^{N,i}, \alpha_t^{N,i}, m_{\bX_t^N}^N, X_t^{N,i}) - L(X_t^{i}, \alpha_t^{i}, \ov{m}_t^N,X_t^i) \big)
  + \Delta X_t^i \cdot E_t^{L,i,x} + \Delta \alpha_t^i \cdot E_t^{L,i,a} \big) \Big)  dt + dM_t^i, 
\end{align}
where $M^i$ is a martingale and we have set 
\begin{align*}
E_t^{L,i,x} &= 
\frac{1}{N}  D_m L(X_t^{i}, \alpha_t^{i}, \ov{m}_t^N, X_t^{i}) + D_x L(X_t^i,\alpha_t^i, \ov{m}_t^N) -  D_x L(X_t^i,\alpha_t^i, \ov{m}_t)  \nonumber \\
E_t^{L,i,a} &= D_a L(X_t^i,\alpha_t^i, \ov{m}_t^N) -  D_a L(X_t^i, \alpha_t^i, \ov{m}_t).
\end{align*}
We can also write the terminal condition as
\begin{align*}
  \Delta X_T^i \cdot \Delta Y_T^i &= \Delta X_T^{i} \cdot \big(D_x G(X_T^{N,i}, m_{\bX_T^N}^N) - D_x G(X_T^i, \ov{m}_T^N) \big) \\
  &\quad+ \frac{1}{N} \Delta X_T^i \cdot \big(D_m G(X_T^{N,i}, m_{\bX_T^N}^N, X_T^{N,i}) - D_m G(X_T^{i}, \ov{m}_T^N, X_T^{i})  \big) \big) + \Delta X_T^i \cdot  E^{G,i}, 
\end{align*}
with $ E^{G,i}$ given by \eqref{eq:def.EG2}.
\newline \newline \noindent 
\textit{Step 3: Using the monotonicity condition:}
Now we integrate \eqref{xydynamics}, sum over $i$, use Lemma \ref{lem:dispmonotoneprojectedaffine} and take expectations to find that there is a constant $C$ independent of $N$ such that
\begin{align} 
C_L &\E \bigg[\int_0^T \sum_i |\Delta \alpha_t^i|^2 dt \bigg] \nonumber \\
&\leq \E \bigg[ \int_0^T \sum_i \Big( |E_t^{L,i,x}||\Delta X_t^i|   + |E_t^{L,i,a}| |\Delta \alpha_t^i|
\Big) dt + \sum_i |E^{G,i}| |\Delta X_T^i| \bigg] \nonumber  \\
&\qquad + \frac{C}{N} \E\bigg[\sum_i \int_0^T |\Delta X_t^i|dt + \sum_i |\Delta X_T^i|  \bigg]
\nonumber \\
&\leq \frac{C_L}{2} \E\bigg[\int_0^T \sum_i |\Delta \alpha_t^i|^2 dt\bigg] + C \E\bigg[\int_0^T \sum_i \Big( |E_t^{L,i,a}|^2 + |E_t^{L,i,x}|^2 \Big) 
dt + \sum_i |E^{G,i}|^2 \bigg] \nonumber  \\
&\leq \big(\frac{C_L}{2} + \frac{C}{N}\big) \E\bigg[\int_0^T \sum_i |\Delta \alpha_t^i|^2 dt\bigg] + C \E\bigg[\int_0^T \sum_i |E_t^{L,i}|^2
dt + \sum_i |E^{G,i}|^2 \bigg] 
\end{align}
where the last line follows from the fact that
\begin{align} \label{xalphaboundaffine}
  \E[\sup_{0 \leq t \leq T} |\Delta X_t^i|^2] \leq C \E\bigg[\int_0^T |\Delta \alpha_t^i|^2 dt\bigg]. 
\end{align}
This concludes the proof.
\end{proof}

\begin{proof}[Proof of Theorem \ref{thm:pocaffine}]
The error terms $E^{L,i}$ and $E^{G,i}$ appearing in the statement of Proposition \ref{prop:mainestaffine} are estimated exactly as the terms $E^{F,i}$, $E^{G,i}$ are estimated in the proof of Theorem \ref{thm:pocdriftcontrol}.
\end{proof}

\subsection{Proof of Theorem \ref{thm:pocvolcontrol}}
\label{subsec:proofmain}

We start, as in the previous section, by using the structural condition to get the key error bound.

\begin{proposition} 
\label{prop:mainest}
Let Assumptions \ref{assump:controlledvolreg} and \ref{assump:controlledvolstruct} hold. Let $\bm \alpha^N = (\alpha^{N,1},\dots,\alpha^{N,N})$ be a Nash equilibrium for the $N$-player game, and $(m,\alpha)$ be a MFE. Then there exist constants $N_0 \in \N$, $C  > 0$ such that for each $N \geq N_0$,
\begin{align*}
\sum_{ i = 1}^N \E\Big[\sup_{0 \leq t \leq T} |X^{N,i}_t - X^i_t|^2 \Big] \leq C \E \bigg[ \int_0^T  \sum_i \Big(|E_t^{b,i}|^2 + |E_t^{\sigma,i}|^2 + |E_t^{\sigma^0,i}|^2  + |E_t^{f,i}|^2 \Big) dt  +  \sum_i |E^{G,i}|^2 \bigg], 
\end{align*}
where 
$\bX^N = (X^{N,1},\dots,X^{N,N})$ is the state process corresponding to $\bm \alpha^N$, $(X^i)_{i = 1,\dots,N}$ are conditionally independent copies of the state process $X$ corresponding to $\alpha$, and
\begin{align*}
  &E_t^{b,i} = D_y H(X_t^i, Y_t^i, Z_t^i, Z_t^{i,0}, \ov{m}_t^N) - D_y H(X_t^i, Y_t^i, Z_t^i, Z_t^{i,0}, \ov{m}_t)
  \\
  &E_t^{\sigma,i} = D_z H(X_t^i, Y_t^i, Z_t^i, Z_t^{i,0}, \ov{m}_t^N) - D_z H(X_t^i, Y_t^i, Z_t^i, Z_t^{i,0}, \ov{m}_t), 
  \\
  &E_t^{\sigma^0,i} = D_{z^0} H(X_t^i, Y_t^i, Z_t^i, Z_t^{i,0}, \ov{m}_t^N) - D_{z^0} H(X_t^i, Y_t^i, Z_t^i, Z_t^{i,0}, \ov{m}_t), 
  \\
  &E_t^{L,i} = \frac{1}{N} D_m H(X_t^{i}, Y_t^{i}, Z_t^{i}, Z_t^{i,0}, \ov{m}_t^N, X_t^{i})
  + D_x H(X_t^i, Y_t^i, Z_t^i, Z_t^{i,0}, \ov{m}_t^N) \\
  & \hspace{2cm}  - D_x H(X_t^i, Y_t^i, Z_t^i, Z_t^{i,0}, \ov{m}_t), \\
  \vspace{.1cm}
  &E^{G,i} = \frac{1}{N} D_m G(X_T^{i}, \ov{m}_T^N, X_T^i) + D_x G(X_T^{i}, \ov{m}_t^N) - D_x G(X_T^i, \ov{m}_T), 
\end{align*}
with $\ov{m}_t^N = \frac1N\sum_{i = 1}^{N} \delta_{X_t^i}$, and $\ov{m}_t = \sL(X_t^i | \sF_t^0)$.

Suppose that in addition the optimizer $\hat{\alpha} = \widehat{\alpha}(x,y,z,z^0,m)$ satisfies
\begin{enumerate}
\item $\hat{\alpha}$ is Lipschitz
\item For each fixed $(x,y,z,z^0)$, the map $m \mapsto \widehat{\alpha}(x,y,z,z^0,m)$ admits derivatives 
\begin{align*}
D_m \widehat{\alpha}(x,y,z,z^0,m,p) = D_m\big[\widehat{\alpha}(x,y,z,z^0, \cdot)](m,p), \\
 D_{mm} \widehat{\alpha}(x,y,z,z^0,m,p,q) = D_m\big[\widehat{\alpha}(x,y,z,z^0, \cdot)](m,p,q)
\end{align*}
which are bounded and Lipschitz continuous, uniformly in $(x,y,z,z^0)$.
\end{enumerate}
Then we have 
\begin{align*}
\sum_{ i = 1}^N \E\bigg[\int_0^T | \alpha_t^{N,i} - \alpha^i_t|^2 dt \bigg] \leq \frac{C}{N} + C \E \bigg[ \int_0^T  \sum_i \Big(|E_t^{b,i}|^2 + |E_t^{\sigma,i}|^2 + |E_t^{\sigma^0,i}|^2  + |E_t^{f,i}|^2 \Big) dt  +  \sum_i |E^{G,i}|^2 \bigg], 
\end{align*}
where 
$(\alpha^i)_{i = 1,\dots,N}$ are conditionally independent copies of the control $\alpha$.
\end{proposition}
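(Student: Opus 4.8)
The plan is to follow the same three-step template used in the proofs of Propositions \ref{prop:mainestdrift} and \ref{prop:mainestaffine}, while accounting for the two new features of the controlled-volatility setting: the diffusion coefficients now depend on $(x,y,z,z^0)$, so the quadratic covariation of $\Delta X^i$ and $\Delta Y^i$ produces genuine $\Delta Z$ terms, and the structural condition \eqref{hamiltonianmonotone} only yields coercivity in $|\Delta X|^2$ (with constant $C_H$) rather than in $|\Delta\alpha|^2$. First I would invoke Lemmas \ref{lem:N.player.smp} and \ref{lem:mpmfggen} to write $\alpha_t^{N,i}=\widehat\alpha(X_t^{N,i},Y_t^{N,i},Z_t^{N,i,i},Z_t^{N,i,0},m_{\bX_t^N}^N)$ with $(\bX^N,\bY^N,\bZ^N)$ solving \eqref{eq:nplayerfbsde}, and to build the conditionally independent copies $(X^i,Y^i,Z^i,Z^{i,0})$, which solve \eqref{eq:mkvfbsde} with $W^i$ in place of $W$ and measure argument $\ov m_t=\sL(X_t^i\mid\sF_t^0)$, so that $\alpha_t^i=\widehat\alpha(X_t^i,Y_t^i,Z_t^i,Z_t^{i,0},\ov m_t)$. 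I abbreviate $\Theta_t^{N,i}=(X_t^{N,i},Y_t^{N,i},Z_t^{N,i,i},Z_t^{N,i,0})$ and $\Theta_t^i=(X_t^i,Y_t^i,Z_t^i,Z_t^{i,0})$.

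Writing $\Delta X_t^i=X_t^{N,i}-X_t^i$, and likewise $\Delta Y^i$, $\Delta Z^{i,i}:=Z^{N,i,i}-Z^i$, $\Delta Z^{i,0}:=Z^{N,i,0}-Z^{i,0}$, the next step is to apply It\^o's product rule to $\Delta X_t^i\cdot\Delta Y_t^i$. The crucial bookkeeping is to add and subtract the copies' coefficients evaluated at the \emph{empirical} measure $\ov m_t^N=\frac1N\sum_j\delta_{X_t^j}$, so that every difference splits into a ``diagonal'' piece in which both $H$-derivatives are evaluated at empirical measures ($m_{\bX_t^N}^N$ and $\ov m_t^N$) plus one of the error terms $E^{b,i},E^{\sigma,i},E^{\sigma^0,i},E^{L,i}$; the $\frac1N D_m H$ drift of the backward equation contributes $E^{L,i}$ together with a remainder $\frac1N\big(D_m H(\Theta_t^{N,i},m_{\bX_t^N}^N,\cdot)-D_m H(\Theta_t^i,\ov m_t^N,\cdot)\big)$ which, by Lipschitz continuity of $D_m H$ (Assumption \ref{assump:controlledvolreg}), is $O(1/N)$. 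Summing over $i$, integrating on $[0,T]$ and taking expectations, the martingale parts vanish and the initial term is zero because $\Delta X_0^i=0$. The diagonal pieces assemble exactly into the left-hand side of Lemma \ref{lem:finitedimmonotone} (with $Z^{N,i,i},Z^i$ in the role of $z^i,\ov z^i$ and $Z^{N,i,0},Z^{i,0}$ in the role of $z^{0,i},\ov z^{0,i}$), hence are bounded above by $-C_H\sum_i|\Delta X_t^i|^2$, while the terminal term, by strict displacement monotonicity of $G$ and Lemma \ref{lem:dispmonotoneprojected}, is bounded below by $C_G\sum_i|\Delta X_T^i|^2$. This produces the master inequality
\[
 C_G\,\E\Big[\sum_i|\Delta X_T^i|^2\Big]+C_H\,\E\Big[\int_0^T\sum_i|\Delta X_t^i|^2\,dt\Big]\;\le\;\mathcal{E}_N+\tfrac CN(\cdots),
\]
where $\mathcal{E}_N=\E\big[\int_0^T\sum_i\big(\Delta Y_t^i\cdot E_t^{b,i}+E_t^{\sigma,i}\cdot\Delta Z_t^{i,i}+E_t^{\sigma^0,i}\cdot\Delta Z_t^{i,0}-\Delta X_t^i\cdot E_t^{L,i}\big)\,dt+\sum_i\Delta X_T^i\cdot E^{G,i}\big]$.

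The main obstacle, and the point where this argument genuinely departs from the constant-volatility and affine cases, is that $\mathcal{E}_N$ and the $O(1/N)$ remainder involve $\Delta Y^i$ and $\Delta Z^{i,\cdot}$, which the coercivity in \eqref{hamiltonianmonotone} does \emph{not} control. To close the loop I would combine the master inequality with the standard stability estimates for the forward and backward equations, treating $\Delta X$ as a source. Applying the usual a priori BSDE estimate to the backward equation for $(\Delta Y^i,\{\Delta Z^{i,j}\}_j)$ — whose driver is Lipschitz in $(y,z)$ by Assumption \ref{assump:controlledvolreg} and whose terminal value is controlled by $|\Delta X_T^i|+|E^{G,i}|+O(1/N)$ — yields
\[
 \sum_i\E\Big[\sup_{t}|\Delta Y_t^i|^2+\int_0^T\sum_{j}|\Delta Z_t^{i,j}|^2\,dt\Big]\le C\,\E\Big[\sum_i|\Delta X_T^i|^2+\int_0^T\sum_i|\Delta X_t^i|^2\,dt\Big]+C\,(\text{errors}).
\]
Inserting this into $\mathcal{E}_N$ after a Young's inequality with a small parameter $\delta$, and using $\cW_2^2(m_{\bX_t^N}^N,\ov m_t^N)\le\frac1N\sum_j|\Delta X_t^j|^2$ to tame the $O(1/N)$ remainder, every $\Delta Y,\Delta Z$ contribution carries a factor $\delta$ and every $\Delta X$ contribution carries $\delta$ or $C/N$; choosing $\delta$ small and $N\ge N_0$ large absorbs them into the $C_G$- and $C_H$-terms on the left. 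This bounds $\E[\sum_i|\Delta X_T^i|^2]$ and $\E[\int_0^T\sum_i|\Delta X_t^i|^2\,dt]$ purely by the listed error terms, and a final application of forward SDE stability (Burkholder--Davis--Gundy and Gronwall) upgrades this to the claimed $\sup_t$ bound, giving the first assertion.

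For the control estimate I would write $\Delta\alpha_t^i=\widehat\alpha(\Theta_t^{N,i},m_{\bX_t^N}^N)-\widehat\alpha(\Theta_t^i,\ov m_t)$ and split off $\widehat\alpha(\Theta_t^i,\ov m_t^N)$: Lipschitz continuity of $\widehat\alpha$ (hypothesis (1)) bounds the first difference by $C\big(|\Delta X^i|+|\Delta Y^i|+|\Delta Z^{i,i}|+|\Delta Z^{i,0}|+\cW_2(m_{\bX_t^N}^N,\ov m_t^N)\big)$, while the remaining weak-error term $\widehat\alpha(\Theta_t^i,\ov m_t^N)-\widehat\alpha(\Theta_t^i,\ov m_t)$ is estimated by Lemma \ref{lem:errorterms} using the assumed boundedness and Lipschitz continuity of $D_m\widehat\alpha$ and $D_{mm}\widehat\alpha$ (hypothesis (2)), which is exactly the role of the extra assumptions on $\widehat\alpha$ and which produces the explicit $C/N$ in the statement. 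Summing over $i$, inserting the stability bounds on $\Delta Y,\Delta Z$ already obtained, and using the first assertion to control $\E[\int_0^T\sum_i|\Delta X_t^i|^2\,dt]$, then yields the second assertion.
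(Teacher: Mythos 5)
Your proposal is correct and follows essentially the same route as the paper's proof: the same three-step template (maximum principle and coupling, It\^o on $\Delta X^i\cdot\Delta Y^i$ with the add-and-subtract at $\ov m_t^N$ producing exactly the error terms $E^{b,i},E^{\sigma,i},E^{\sigma^0,i},E^{L,i},E^{G,i}$, then Lemma \ref{lem:finitedimmonotone} and displacement monotonicity of $G$ for the master inequality), followed by the same closing of the loop via the a priori BSDE stability estimate for $(\Delta Y^i,\Delta Z^{i,\cdot})$, Young's inequality with a small parameter, absorption for $N$ large, and forward stability for the $\sup_t$ bound. The treatment of the control estimate — Lipschitz splitting of $\widehat\alpha$ plus the weak-error bound via Lemma \ref{lem:errorterms} using the assumed derivatives of $\widehat\alpha$ in $m$ — also matches the paper.
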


\begin{proof}
We again proceed using the same three steps as in the proofs of Propositions \ref{prop:mainestdrift} and \ref{prop:mainestaffine}. The main difference in this proof is that we cannot directly bound $\alpha^{N,i} - \alpha^i$, and must instead proceed by estimating $X^{N,i} - X^i$ directly. 
\newline \newline \noindent
\textit{Step 1 - Applying the maximum principle:} By Lemma \ref{lem:mpmfggen}, we have $\alpha_t = \widehat{\alpha}(X_t,Y_t,Z_t,Z_t^0,m_t)$ for some solution $(X,Y,Z,Z^0)$ to the McKean-Vlasov FBSDE \eqref{eq:mkvfbsde}. For each $N$ and $i \in \{1,\dots,N\}$ we set $(X^i,Y^i, Z^i, Z^{i,0})$ to be the conditionally independent copies of $(X,Y,Z,Z^0)$ defined in Definition \ref{def:condindcopies}. As in the proof of Proposition \ref{prop:mainestdrift}, we observe that $(X^i,Y^i, Z^i, Z^{i,0})$ solves the equation \eqref{eq:mkvfbsde} but with $W^i$ replacing $W$, and $\alpha_t^i = \widehat{\alpha}(X_t^i, Y_t^i, Z_t^{i}, Z_t^{i,0}, \sL(X_t^i | \sF_t^0))$. Likewise, we have by Lemma \ref{lem:N.player.smp} $\alpha_t^{N,i} = \widehat{\alpha}(X_t^{N,i}, Y_t^{N,i}, Z_t^{N,i,i}, Z_t^{N,i,0}, m_{\bX_t^N}^N)$ for some tuple $(\bm X^N, \bY^N, \bZ^N)$ satisfying \eqref{eq:nplayerfbsde}. 
\newline \newline \noindent
\textit{Step 2 - Computing $d \Delta X_t^i \cdot \Delta Y_t^i$:} We now set 
\begin{align*}
\Delta X_t^i = X_t^{N,i} - X_t^i, \quad \Delta Y_t^i = Y_t^{N,i} - Y_t^i, \quad \Delta Z_t^i = Z_t^{N,i,i} - Y_t^i, \quad \Delta Z_t^{i,0} = Z_t^{N,i,0} - Z_t^{i,0}
\end{align*}
and compute 
\begin{align} \label{xycompcont}
d \Delta& X_t^i \cdot \Delta Y_t^i \nonumber  
\\
&=  \bigg( - \Delta X_t^i\cdot\Big( D_x H (X_t^{N,i},Y_t^{N,i},Z_t^{N,i,i},Z_t^{N,i,0}, m_{\bX_t^N}^N) - D_x H(X_t^i,Y_t^i,Z_t^i,Z_t^{i,0}, \ov{m}_t) \bigg)  \nonumber \\
&\quad + \Delta Y_t^i\cdot\Big(D_y H (X_t^{N,i},Y_t^{N,i},Z_t^{N,i,i},Z_t^{N,i,0}, m_{\bX_t^N}^N) - D_y H(X_t^i,Y_t^i,Z_t^i,Z_t^{i,0}, \ov{m}_t) \Big)  \nonumber \\ 
&\quad +  \Delta Z_t^{i}\cdot \Big(D_z H (X_t^{N,i},Y_t^{N,i},Z_t^{N,i,i},Z_t^{N,i,0}, m_{\bX_t^N}^N) - D_z H(X_t^i,Y_t^i,Z_t^i,Z_t^{i,0}, \ov{m}_t) \Big)  \nonumber \\ 
&\quad + \Delta Z_t^{i,0} \cdot \Big( D_{z^0} H (X_t^{N,i},Y_t^{N,i},Z_t^{N,i,i},Z_t^{N,i,0}, m_{\bX_t^N}^N) - D_{z^0} H(X_t^i,Y_t^i,Z_t^i,Z_t^{i,0}, \ov{m}_t) \Big) dt  \nonumber \\
& \quad - \frac{1}{N} \Delta X_t^i \cdot D_m H(X_t^{N,i},Y_t^{N,i},Z_t^{N,i,i},Z_t^{N,i,0}, m_{\bX_t^N}^N) dt + dM_t^i \nonumber \\
&= 
\bigg( - \Delta X_t^i\cdot\Big( D_x H (X_t^{N,i},Y_t^{N,i},Z_t^{N,i,i},Z_t^{N,i,0}, m_{\bX_t^N}^N) - D_x H(X_t^i,Y_t^i,Z_t^i,Z_t^{i,0}, \ov{m}_t^N) \bigg)  \nonumber \\
&\quad + \Delta Y_t^i\cdot\Big(D_y H (X_t^{N,i},Y_t^{N,i},Z_t^{N,i,i},Z_t^{N,i,0}, m_{\bX_t^N}^N) - D_y H(X_t^i,Y_t^i,Z_t^i,Z_t^{i,0}, \ov{m}_t^N) \Big)  \nonumber \\ 
&\quad +  \Delta Z_t^{i}\cdot \Big(D_z H (X_t^{N,i},Y_t^{N,i},Z_t^{N,i,i},Z_t^{N,i,0}, m_{\bX_t^N}^N) - D_z H(X_t^i,Y_t^i,Z_t^i,Z_t^{i,0}, \ov{m}_t^N) \Big)  \nonumber \\ 
&\quad + \Delta Z_t^{i,0} \cdot \Big( D_{z^0} H (X_t^{N,i},Y_t^{N,i},Z_t^{N,i,i},Z_t^{N,i,0}, m_{\bX_t^N}^N) - D_{z^0} H(X_t^i,Y_t^i,Z_t^i,Z_t^{i,0}, \ov{m}_t^N) \Big) dt   \nonumber \\ 
& \quad 
- \frac{1}{N} \Delta X_t^i \cdot \big(D_m H(X_t^{N,i}, Y_t^{N,i}, Z_t^{N,i,i}, Z_t^{N,i,0}, m_{\bX_t^N}^N, X_t^{N,i})
 - D_m H(X_t^{i}, Y_t^{i}, Z_t^{i}, Z_t^{i,0}, \ov{m}_t^N, X_t^i)
  \big)
\nonumber \\ 
& \quad + \Big(- \Delta X_t^i \cdot E_t^{L,i} + \Delta Y_t^i \cdot E_t^{b,i} + \Delta Z_t^i \cdot E_t^{\sigma,i} + \Delta Z_t^0 \cdot E_t^{\sigma^0,i} \Big)dt +  dM_t^i 
\end{align}
with $M^i$ being a martingale.
Similarly, we can write the terminal condition as
\begin{align*}
  \Delta X_T^i \cdot \Delta Y_T^i &= \Delta X_T^{i} \cdot \big(D_x G(X_T^{N,i}, m_{\bX_T^N}^N) - D_x G(X_T^i, \ov{m}_T^N) \big) \\
  &\quad+ \frac{1}{N} \Delta X_T^i \cdot \big(D_m G(X_T^{N,i}, m_{\bX_T^N}^N, X_T^{N,i}) - D_m G(X_T^{i}, \ov{m}_T^N, X_T^{i})  \big) \big) + \Delta X_T^i \cdot  E^{G,i}. 
\end{align*}
We now integrate \eqref{xycompcont} and take expectations, to find 
\begin{align*}
  \E[\Delta &X_T^i \cdot \Delta Y_T^i] = \E\big[\Delta X_T^{i} \cdot \big(D_x G(X_T^{N,i}, m_{\bX_T^N}^N) - D_x G(X_T^i, \ov{m}_T^N) \big) + \Delta X_T^i \cdot  E^{G,i}\big] \\
&= \E\bigg[ \int_0^T \bigg( - \Delta X_t^i\cdot\Big( D_x H (X_t^{N,i},Y_t^{N,i},Z_t^{N,i,i},Z_t^{N,i,0}, m_{\bX_t^N}^N) - D_x H(X_t^i,Y_t^i,Z_t^i,Z_t^{i,0}, \ov{m}_t^N) \bigg)  \nonumber \\
&\quad + \Delta Y_t^i \cdot \Big(D_y H (X_t^{N,i},Y_t^{N,i},Z_t^{N,i,i},Z_t^{N,i,0}, m_{\bX_t^N}^N) - D_y H(X_t^i,Y_t^i,Z_t^i,Z_t^{i,0}, \ov{m}_t^N) \Big) \nonumber \\ 
&\quad +  \Delta Z_t^{i}\cdot\Big(D_z H (X_t^{N,i},Y_t^{N,i},Z_t^{N,i,i},Z_t^{N,i,0}, m_{\bX_t^N}^N) - D_z H(X_t^i,Y_t^i,Z_t^i,Z_t^{i,0}, \ov{m}_t^N) \Big) \nonumber \\ 
&\quad +  \Delta Z_t^{i,0}\cdot\Big( D_{z^0} H (X_t^{N,i},Y_t^{N,i},Z_t^{N,i,i},Z_t^{N,i,0}, m_{\bX_t^N}^N) - D_{z^0} H(X_t^i,Y_t^i,Z_t^i,Z_t^{i,0}, \ov{m}_t^N) \Big)   \nonumber \\
& \quad - \frac{1}{N} \Delta X_t^i \cdot \big(D_m H(X_t^{N,i}, Y_t^{N,i}, Z_t^{N,i,i}, Z_t^{N,i,0}, m_{\bX_t^N}^N, X_t^{N,i})
 - D_m H(X_t^{i}, Y_t^{i}, Z_t^{i}, Z_t^{i,0}, \ov{m}_t^N, X_t^i)
  \big) \nonumber \\
& \quad - \Delta X_t^i \cdot E_t^{L,i} + \Delta Y_t^i \cdot E_t^{b,i} + \Delta Z_t^i \cdot E_t^{\sigma,i} + \Delta Z_t^{i,0} \cdot E_t^{\sigma^0,i} \bigg) dt \bigg]. 
\end{align*}
\textit{Step 3 - Using the monotonicity condition:}
Summing over $i$ and using the definitions of $C_H$, $C_F$ and $C_G$, together with Lemma \ref{lem:finitedimmonotone} and the Lipschitz continuity of $D_m H$ and $D_m G$, we infer that there is a constant $C$ independent of $N$ such that
\begin{align*}
C_H& \E\bigg[\int_0^T \sum_i |\Delta X_t^i|^2 dt \bigg] + C_G \E\Big[\sum_i |\Delta X_T^i|^2\Big] \\
&\leq \E \bigg[ \int_0^T \sum_i \Big( |E_t^{L,i}||\Delta X_t^i| + |E_t^{b,i}||\Delta Y_t^i| + |E_t^{\sigma,i}||\Delta Z_t^i| + |E_t^{\sigma^0,i}||\Delta Z_t^{i,0}| \Big)  dt + \sum_i |E_t^{G,i}| |\Delta X_T^i| \bigg] \\
&\quad + \frac{C}{N} \E\bigg[\sum_i \int_0^T \Big( |\Delta X_t^i|^2 + |\Delta Y_t^i|^2 + |\Delta Z_t^i|^2 + |\Delta Z_t^{i,0}|^2 \Big) dt + \sum_i |\Delta X_T^i|^2 \bigg].
\end{align*}
Applying Young's inequality we find that for each $\varepsilon > 0$, 
\begin{align} \notag
\E\bigg[\int_0^T &\sum_i |\Delta X_t^i|^2 dt + \sum_i |\Delta X_T^i|^2 \bigg]\\\notag
 &\leq \frac{C}{\varepsilon} \E \bigg[ \int_0^T  \sum_i \Big(|E_t^{L,i}|^2 + |E_t^{b,i}|^2 + |E_t^{\sigma,i}|^2 + |E^{\sigma^0,i}|^2 \Big) dt +  \sum_i |E^{G,i}|^2 \bigg] \nonumber  \\\label{firstest}
& \qquad + C \varepsilon \E\bigg[\int_0^T \sum_i \Big( |\Delta X_t^i|^2 + |\Delta Y_t^i|^2 + |\Delta Z_t^i|^2 + |\Delta Z_t^{i,0}|^2 \Big) dt \bigg] \nonumber \\
& \qquad  + \frac{C}{N} \E\bigg[\sum_i \int_0^T \Big( |\Delta X_t^i|^2 + |\Delta Y_t^i|^2 + |\Delta Z_t^i|^2 + |\Delta Z_t^{i,0}|^2 \Big) dt + \sum_i |\Delta X_T^i|^2 \bigg].
\end{align} 
Next, we use Theorem 4.2.3 of \cite{Zhang} (here we use Lipschitz continuity of $D_x H$) to conclude that for each fixed $i$, 
\begin{align} \label{bsdestab}
\sum_i \E\bigg[\sup_{0 \leq t \leq T} |\Delta Y_t^i| &+ \int_0^T 
\Big(|\Delta Z_t^i|^2 + |\Delta Z_t^{i,0}|^2 \Big) dt \bigg] \nonumber \\
 &\leq C \sum_i \E \bigg[ \int_0^T \Big( |E_t^{L,i}|^2 + |\Delta X_t^i|^2 \Big) dt +   |E^{G,i}|^2 + |\Delta X_T^i|^2 \bigg].
\end{align}
Plugging this into \eqref{firstest}, we arrive at 
\begin{align} \notag
  \E\bigg[\int_0^T \sum_i |\Delta X_t^i|^2 dt &+ \sum_i |\Delta X_T^i|^2 \bigg] \\\notag
  &\leq \frac{C}{\varepsilon} \E \bigg[ \int_0^T  \sum_i \Big(|E_t^{L,i}|^2 + |E_t^{b,i}|^2 + |E_t^{\sigma,i}|^2 + |E^{\sigma^0,i}_t|^2 \Big) dt +  \sum_i |E^{G,i}|^2 \bigg]   \\\label{secondest}
  & \qquad + C \varepsilon \sum_i \E\bigg[\int_0^T |\Delta X_t^i|^2 dt + |\Delta X_T^i|^2 \bigg] \nonumber  \\
& \qquad  + \frac{C}{N} \E\bigg[\sum_i \int_0^T |\Delta X_t^i|^2  dt + \sum_i |\Delta X_T^i|^2 \bigg].
\end{align} 
Choosing $\varepsilon$ small enough, we find that for all $N$ large enough, 
\begin{align} \label{thirdest}
  \E\bigg[\int_0^T \sum_i |\Delta X_t^i|^2 dt + \sum_i |\Delta X_T^i|^2 \bigg] &\leq C \E \bigg[ \int_0^T  \sum_i \Big(|E_t^{L,i}|^2 + |E_t^{b,i}|^2 + |E_t^{\sigma,i}|^2 + |E^{\sigma^0,i}_t|^2  \Big) dt +  \sum_i |E^{G,i}|^2 \bigg],
\end{align} 
and then plugging this into \eqref{bsdestab} gives 
\begin{align} \label{bsdestab2}
\sum_i \E\bigg[&\sup_{0 \leq t \leq T} |\Delta Y_t^i| + \int_0^T 
\Big(|\Delta Z_t^i|^2 + |\Delta Z_t^{i,0}|^2 \Big) dt \bigg]
\nonumber \\
&\leq  C \E \bigg[ \int_0^T  \sum_i \Big(|E_t^{L,i}|^2 + |E_t^{b,i}|^2 + |E_t^{\sigma,i}|^2 + |E_t^{\sigma^0,i}|^2  \Big) dt +  \sum_i |E^{G,i}|^2 \bigg].
\end{align}

Now since $D_yH$, $D_z H$, $D_{z^0}H$ are all Lipschitz, we can easily use \eqref{thirdest} and \eqref{bsdestab2} to obtain
\begin{align*}
\sum_i \E\big[& \sup_{0 \leq t \leq T} |\Delta X_t^i|^2 \big] 
\\
&\leq C \sum_i \E\bigg[\int_0^T \Big( |\Delta X_t^i|^2 + |\Delta Z_t^i|^2 + |\Delta Z_t^{i,0}|^2 + |E_t^{b,i}|^2 + |E_t^{\sigma,i}|^2 + |E_t^{\sigma^0,i}|^2 \Big)dt \bigg]  \\
&\leq C \sum_i \E \bigg[ \int_0^T \Big(|E_t^{L,i}|^2 + |E_t^{b,i}|^2 + |E_t^{\sigma,i}|^2 + |E^{\sigma^0,i}_t|^2 \Big) dt +  \sum_i |E^{G,i}|^2 \bigg]
\end{align*}
which completes the proof thanks to Lemma \ref{lem:exchangeable}. 

The estimate on $\alpha_t^i - \alpha_t^{N,i}$ when $\widehat{\alpha}$ satisfies the additional Lipschitz and smoothness conditions follows by first using the bounds on $D_m \widehat{\alpha}$ and $D_{mm} \widehat{\alpha}$ to get
\begin{align} \label{mdependence}
\E\bigg[\int_0^T |\widehat{\alpha}(X_t^i, Y_t^i,Z_t^i, Z_t^{i,0}, \ov{m}_t^N) - \widehat{\alpha}(X_t^i, Y_t^i,Z_t^i, Z_t^{i,0}, \ov{m}_t^N)|^2 dt\bigg] \leq \frac{C}{N}
\end{align}
using the same argument used to bound the error terms $E_t^{F,i}$ and $E^{G,i}$ in the proof of Theorem \ref{thm:pocdriftcontrol}, and then concluding that 
\begin{align*}
\E\bigg[\int_0^T &|\Delta \alpha_t^i|^2 dt \bigg] = \E\bigg[\int_0^T |\widehat{\alpha}(X_t^{N,i}, Y_t^{N,i}, Z_t^{N,i,i}, Z_t^{N,i,0}, m_{\bX_t^N}^N) - \widehat{\alpha}(X_t^{i}, Y_t^{i}, Z_t^{i}, Z_t^{i,0}, \ov{m}_t)  |^2 dt \bigg] \\
&\leq C\E\bigg[\int_0^T |\widehat{\alpha}(X_t^{N,i}, Y_t^{N,i}, Z_t^{N,i,i}, Z_t^{N,i,0}, m_{\bX_t^N}^N) - \widehat{\alpha}(X_t^{i}, Y_t^{i}, Z_t^{i}, Z_t^{i,0}, \ov{m}_t^N)  |^2 dt \bigg] \\
&\qquad + C\E\bigg[\int_0^T |\widehat{\alpha}(X_t^i, Y_t^i,Z_t^i, Z_t^{i,0}, \ov{m}_t) - \widehat{\alpha}(X_t^i, Y_t^i,Z_t^i, Z_t^{i,0}, \ov{m}_t^N)|^2 dt \bigg] \\
&\leq C \E\bigg[\int_0^T \Big(|\Delta X_t^i|^2 + |\Delta Y_t^i|^2 + |\Delta Z_t^i|^2 + |\Delta Z_t^{i,0}|^2 + \frac{1}{N} \sum_{j = 1}^N |\Delta X_t^j|^2 \Big) \bigg] + \frac{C}{N}, 
\end{align*}
where the last inequality uses both the Lipschitz property of $\widehat{\alpha}$ and \eqref{mdependence}. We conclude the proof by applying the estimates on $\Delta X^i$, $\Delta Y^i$, $\Delta Z^i$, and $\Delta Z^{i,0}$ obtained above in \eqref{bsdestab2}.
\end{proof}

\begin{proof}[Proof of Theorem \ref{thm:pocvolcontrol}]
The error terms $E^{L,i}$, $E^{b,i}$, $E^{\sigma,i}$, $E^{\sigma^0,i}$, and $E^{G,i}$ appearing in the statement of Proposition \ref{prop:mainest} are estimated exactly as the terms $E^{F,i}$, $E^{G,i}$ are estimated in the proof of Theorem \ref{thm:pocdriftcontrol}.
\end{proof}

\begin{proof}[Proof of Corollary \ref{cor:q.moment}]
  First assume that $q\neq 4$.
  As in the proof of Theorem \ref{thm:pocdriftcontrol}, we use Proposition \ref{prop:mainestdrift} to obtain
  \begin{align} \label{eq:bound.four.corol.proof}
  \notag
  \sum_{ i } \E\Big[ &\sup_{0 \leq t \leq T} |X_t^{N,i} - X_t^i|^2 \Big] \leq C \sum_i \E \bigg[ \int_0^T \Big(|E_t^{L,i}|^2 + |E_t^{b,i}|^2 + |E_t^{\sigma,i}|^2 + |E^{\sigma^0,i}_t|^2 \Big) dt +  \sum_i |E^{G,i}|^2 \bigg].
\end{align}
We can now use Lipschitz continuity of the derivatives of $H$ and $G$, together with the integrability of $m_t$ to estimate each of the terms on the right-hand side of \eqref{eq:bound.four.corol.proof}. For example, we have 
\begin{align}
\notag
\E\Big[ |E_t^{L,i}|^2 \Big] &\leq \frac{C}{N^2} \E\Big[ \big|D_m H(X_t^{i}, Y_t^{i}, Z_t^{i},\ov{m}_t^N, X_t^{i})\big|^2 \Big] \\
&\quad + C \E\Big[ |D_x H(X_t^i, Y_t^i, Z_t^i, Z_t^{i,0}, \ov{m}_t^N) - D_x H(X_t^i, Y_t^i, Z_t^i, Z_t^{i,0}, \ov{m}_t)|^2 \Big] \nonumber  \\
&\leq \frac{C}{N^2} \E\Big[1 + |X_t^{N,i}|^2 + |Y_t^{N,i}|^2 + |Z_t^{N,i,i}|^2 + |Z_t^{N,i,0}|^2 + \frac{1}{N} \sum_j |X_t^j|^2 \Big] + C \E[\cW_2^2(\ov{m}_t, \ov{m}_t^N)].
\end{align}
We next sum over $i$ and integrate in $t$, plug the result into \eqref{eq:bound.four.corol.proof}. This gives 
\begin{align*}
    \sum_{ i } \E\Big[ &\sup_{0 \leq t \leq T} |X_t^{N,i} - X_t^i|^2 \Big] \leq \frac{C}{N} + CN \E\bigg[\int_0^T \cW_2^2(\ov{m}_t^N, \ov{m}_t) dt + \cW_2^2(\ov{m}^N_T,\ov{m}_T) \bigg].
\end{align*}
To conclude, we use \cite[Theorem 1]{fournier2015rate} to see that
\begin{align}
  \E\Big[\E[\cW_2^2(\ov{m}_t^N, \ov{m}_t)|\sF^0_T] \bigg] 
  \label{eq:bound.FG.proof}
  &\le C\sup_{t\in [0,T]}\E[M_q(m_t)]r_{N,q}.
\end{align}
If $q=4$, then it holds $\sup_{t\in [0,T]}\E[M_q(m_t)]<\infty$ for every $q\in (2,4)$.
Thus, we still have \eqref{eq:bound.FG.proof}.
  The estimate for $|\alpha^{N,i}-\alpha^i|$ follows as in the proof of Proposition \ref{prop:mainest}. 
\end{proof}

\section{Games with infinite horizon} \label{subsec:infinite}
In this section we turn our attention to non-cooperative games with infinite horizon.
Using again the displacement semi--monotonicity property of the coupling function $F$ as well as convexity of the Lagrangian $L$, we will show that the infinite horizon mean field game admits a unique mean field equilibrium, and that any sequence of Nash equilibria of the associated finite player games converges, in a strong sense, to the mean field equilibrium with an explicit rate.
This seems to be the first convergence result for games with infinite horizon (although for non-degenerate volatility and Lasry-Lions monotone data, a convergence result for closed-loop Nash equilibria should follow from the existence of a smooth solution to the master equation, as constructed in \cite{cardporetta}.)

Let $r>0$ be a fixed discounting factor.
In this section we continue working in the probabilistic setup introduced in Section \ref{subsec:probsetup}, with the only exception that the independent Brownian motions $(W^0,W^1,\dots,W^N)$ as well as the various filtrations are defined on the time interval $[0,\infty)$.
Furthermore, we let the set of admissible controls in the $N$--player game be defined as
\begin{align} \label{def.an.inf}
  \sA_N \coloneqq \bigg\{\text{$\bbF^N$-progressive $\R^k$-valued processes $\alpha = (\alpha_t)_{0 \leq t <\infty }$ such that $\E\Big[\int_0^\infty\e^{-rt} |\alpha_t|^2 dt\Big] < \infty$ } \bigg\}.
\end{align}
In the infinite horizon case, our data consists of functions 
\begin{align*}
L = L(x,a) : \R^n \times \R^n \to \R, \quad F = F(x,m) : \R^n \times \spt \to \R, \quad \Sigma, \Sigma^0 \in \R^{n \times d}
\end{align*}
together with the discount factor $r$ and the initial condition $m_0 \in \spt$. 
Player $i$ minimizes the cost functional $J^{N,i}:\sA_N^N\to \R$ given by
\begin{align*}
  J^{N,i}(\bm \alpha) = J^{N,i}(\alpha^1,\dots,\alpha^N) = \E\bigg[ \int_0^\infty\e^{-rt} \big(L(X_t^i,\alpha_t^i) + F(X_t^i,m_{\bX_t^N}^N) \big) dt  \bigg]
\end{align*}
subject to
\begin{align*}
  dX_t^i =  \alpha_t^i dt + \Sigma dW_t^i + \Sigma^0 dW_t^0, \quad X_0^i = \xi^i.
\end{align*}
The associated MFG (for a generic idiosyncratic noise $(W, \xi)$) is defined as in the finite horizon case.
In fact, we let $\sA$ be the set of admissible controls with
\begin{align} \label{def.an.mfg}
  \sA \coloneqq \bigg\{\text{$\bbF$-progressive $\R^k$--valued processes $\alpha = (\alpha_t)_{0 \leq t<\infty}$ such that $\E\Big[\int_0^\infty\e^{-rt} |\alpha_t|^2 dt\Big] < \infty$ } \bigg\}.
\end{align}
Given an $\bbF^0$--progressive $\sP_2(\R^n)$--valued process $m = (m_t)_{0\le t<\infty}$, consider the cost
\begin{align} \label{mfoptinf}
  J_m(\alpha) = \E\bigg[  \int_0^\infty\e^{-rt} \big( L(X_t, \alpha_t) + F(X_t,m_t) \big) dt  \bigg]
\end{align}
that is minimized over $\sA$
subject to the controlled state dynamics
\begin{align} \label{dynamicsdef.inf}
  dX_t =  \alpha_t dt +\Sigma d W_t + \Sigma^0d W_t^0, \quad X_0 = \xi. 
\end{align}
A mean  field equilibrium is a pair $(m, \alpha)$ where $m$ is an $\bbF^0$-progressive $\sP_2(\R^n)$-valued process and $\alpha \in \sA$ is a minimizer of $J_{m}$ with corresponding state process $X$, such that
\begin{align}
  m_t = \sL(X_t | \sF_t^0), \,\, d\bP\text{--a.e. for each } t>\infty.
\end{align}

Here, we let the (reduced) Hamiltonian $H$ be defined as
\begin{equation}
  \label{eq.H.inf}
  H(x,y) := \inf_{a\in \R^k}(L(x,a) + a\cdot y).
\end{equation}
We will make the following assumption:
\begin{assumption}[Main condition, infinite horizon case] \label{assump:driftcontrolstruct.inf.T}
The functions $L, F$ and $H$ satisfy the following conditions in all arguments:
\begin{itemize}
    \item[(i)] The Lagrangian $L$ is $C^1$, and the derivatives $D_xL$ and $D_a L$ are Lipschitz continuous. The coupling term $F$ is $C^1$, and the maps 
\begin{align*}
&\R^n \times \spt \ni (x,m) \mapsto D_x F(x,m)
\quad\mathrm{and}\quad \R^n \times \spt \times \R^n \ni (x,m,y) \mapsto D_m F(x,m,y) 
\end{align*}
are each Lipschitz continuous in all arguments.
  \item[(ii)] The Lagrangian $L$ is jointly convex in $(x,a)$, and there is a constant $C_L > 0$ such that
\begin{align} \label{jointconvex.Tinf}
\big(D_x L(x,a) - D_x L(x',a')\big)\cdot(x - x') + \big(D_a L(x,a) - D_a L(x',a')\big)\cdot(a - a') \geq C_L |a - a'|^2
\end{align}
for all $x,x'\in \R^n$ and $,a,a' \in \R^k$. 
  \item[(iii)] There is a real number $C_{F}$ such that $F$ is $ C_{F}$-displacement semi-monotone, and we have
\begin{align} \label{convexmonotone.Tinf}
  r^2> 4\frac{C_F^-}{C_L}, \quad \text{where } C_F^- = - ( C_F \wedge 0).
\end{align} 
  \item[(iv)] For each fixed $x$, the map $m \mapsto D_x F(x,m)$ admits two derivatives 
\begin{align*}
D_m D_x F(x,m,y) = D_m\big[D_x F(x,\cdot)](m,y), \quad D_{mm} D_x F(x,m,y,z) = D_{mm} \big[D_x F(x,\cdot)\big](m,y,z), 
\end{align*}
which are bounded and Lipschitz continuous in $m$, uniformly in $x$.
\end{itemize}
\end{assumption}
\begin{lemma}
  \label{lem.existence.inf}
Let Assumption \ref{assump:driftcontrolstruct.inf.T}.(iii) hold.
    Then, if a MFE $(m,\alpha)$ exists, it satisfies
  \begin{equation}
  \label{eq:mfe.rep.inf}
    \alpha_t = D_yH(X_t, Y_t)\quad dt\times d\P\text{--a.s.}\quad \mathrm{and}\quad m_t = \sL(X_t|\sF^0_t) \quad \P\text{--a.s. for all } t\ge0
  \end{equation}
  where $(X,Y,Z,Z^0)$ solves the conditional McKean--Vlasov FBSDE
\begin{align} \label{mpmfg.inf}
\begin{cases} \vspace{.1cm}
  dX_t = \alpha_t dt + \Sigma dW_t + \Sigma_0 dW_t^0,  \\ \vspace{.1cm}
  dY_t = - \Big(D_x L(X_t, \alpha_t) + D_x F(X_t, \sL(X_t | \sF_t^0)) - rY_t \Big) dt + Z_t  dW_t + Z_t^{0}  dW_t^0, \\
  X_0 = \xi. 
\end{cases}
\end{align}
Moreover, if for every $m$ the function $(x,a)\mapsto L(x,a) + F(x,m)$
 is convex, and \eqref{mpmfg.inf} admits a solution $(X,Y, Z,Z^0)$, then $(m,\alpha)$ given by \eqref{eq:mfe.rep.inf} is a mean field equilibrium.
\end{lemma}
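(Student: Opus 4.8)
The plan is to follow the same route as Lemma \ref{lem:mpmfggen}: once the measure flow $m$ is frozen, an MFE control $\alpha$ is precisely an optimizer of the \emph{standard} (non-mean-field) discounted control problem of minimizing $J_m$ over $\sA$ subject to \eqref{dynamicsdef.inf}, so the characterization reduces to the Pontryagin maximum principle for that problem. The only genuinely new feature compared with the finite-horizon Lemma \ref{lem:mpmfggen} is the passage to the infinite time horizon, which forces us to build the adjoint process directly and to control boundary terms at infinity rather than impose a terminal condition. I would treat the two directions separately. For \textbf{necessity}, given an MFE $(m,\alpha)$ with state $X$, I would first define the adjoint process by the explicit (present-value) formula
\[
Y_t := \E\Big[\int_t^\infty e^{-r(s-t)}\big(D_xL(X_s,\alpha_s) + D_xF(X_s,m_s)\big)\,ds \,\Big|\, \sF_t\Big].
\]
The Lipschitz (hence linear-growth) bounds in Assumption \ref{assump:driftcontrolstruct.inf.T}(i), together with admissibility of $\alpha$ and the explicit dynamics, give $\E[\int_0^\infty e^{-rt}(|D_xL|^2+|D_xF|^2)\,dt]<\infty$, so $Y$ is well defined; applying the martingale representation theorem to the martingale $t\mapsto \E[\int_0^\infty e^{-rs}(D_xL+D_xF)\,ds\mid\sF_t]$ and undoing the discounting produces $Z,Z^0$ with $(X,Y,Z,Z^0)$ solving the backward equation in \eqref{mpmfg.inf}; note that the $-rY_t$ term there is exactly the present-value correction.

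I would then run the standard convex perturbation argument. For a bounded $\beta\in\sA$ set $\alpha^\varepsilon=\alpha+\varepsilon\beta$; the associated variation $V$ solves $dV_t=\beta_t\,dt$, $V_0=0$, and differentiating $J_m(\alpha^\varepsilon)$ at $\varepsilon=0$ (admissible since $\alpha$ is a minimizer) gives a first-order expression involving $D_xL\cdot V_t$, $D_aL\cdot\beta_t$ and $D_xF\cdot V_t$. Applying It\^o's formula to $e^{-rt}Y_t\cdot V_t$, the discount term in \eqref{mpmfg.inf} cancels the $-rV_t$ contribution, and after sending $T\to\infty$ (the boundary term being controlled as in the last paragraph) one obtains
\[
0 = \frac{d}{d\varepsilon}\Big|_{\varepsilon=0}J_m(\alpha^\varepsilon) = \E\Big[\int_0^\infty e^{-rt}\big(Y_t + D_aL(X_t,\alpha_t)\big)\cdot\beta_t\,dt\Big].
\]
Since bounded $\beta$ are arbitrary, $D_aL(X_t,\alpha_t)=-Y_t$ a.e., and the strict convexity of $L$ in $a$ coming from the $C_L>0$ condition in Assumption \ref{assump:driftcontrolstruct.inf.T}(ii) identifies $\alpha_t$ as the unique minimizer of $a\mapsto L(X_t,a)+a\cdot Y_t$, i.e. $\alpha_t=D_yH(X_t,Y_t)$, which is the claimed representation \eqref{eq:mfe.rep.inf}.

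For \textbf{sufficiency}, suppose $(X,Y,Z,Z^0)$ solves \eqref{mpmfg.inf} with $\alpha_t=D_yH(X_t,Y_t)$ and $m_t=\sL(X_t|\sF_t^0)$, and take any competitor $\beta\in\sA$ with state $X^\beta$; it suffices to treat $\beta$ with $J_m(\beta)<\infty$. Writing $\Delta_t=X^\beta_t-X_t$ (so $d\Delta_t=(\beta_t-\alpha_t)\,dt$, $\Delta_0=0$) and using the joint convexity of $(x,a)\mapsto L(x,a)+F(x,m_t)$ gives the pointwise lower bound of the integrand of $J_m(\beta)-J_m(\alpha)$ by its first-order Taylor term $(D_xL+D_xF)\cdot\Delta_t+D_aL\cdot(\beta_t-\alpha_t)$, all evaluated at $(X_t,\alpha_t,m_t)$. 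The same It\^o computation for $e^{-rt}Y_t\cdot\Delta_t$, together with the identity $D_aL(X_t,\alpha_t)=-Y_t$ built into the Hamiltonian minimizer, collapses this lower bound to $\E[\int_0^\infty e^{-rt}(Y_t+D_aL(X_t,\alpha_t))\cdot(\beta_t-\alpha_t)\,dt]=0$ once the boundary term vanishes, whence $J_m(\beta)\ge J_m(\alpha)$ and $(m,\alpha)$ is an MFE.

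The crux, absent in the finite-horizon setting, is justifying that the transversality terms $\E[e^{-rT}Y_T\cdot V_T]$ and $\E[e^{-rT}Y_T\cdot\Delta_T]$ vanish as $T\to\infty$; this is where the discounting $r>0$ in Assumption \ref{assump:driftcontrolstruct.inf.T} is decisive. From the representation of $Y$ and Cauchy--Schwarz one gets $\E[|e^{-rT}Y_T|^2]\le \tfrac{e^{-rT}}{r}\,\E[\int_T^\infty e^{-rs}(|D_xL|^2+|D_xF|^2)\,ds]$, which is $o(e^{-rT})$. In the necessity direction the perturbation $V_T$ grows at most linearly in $T$ (bounded $\beta$), so the product decays and the boundary term is clean. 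The delicate case is sufficiency: for a general admissible competitor $\Delta_T$ may grow like $e^{rT/2}$, so one must exploit the restriction $J_m(\beta)<\infty$ to secure the integrability of $\Delta$ needed to push the boundary term to zero (the inequality being trivial otherwise). Making these weighted estimates rigorous—controlling the growth of $X$, $V$, $\Delta$ against the exponential weight and verifying that the relevant stochastic integrals are true martingales—is the main technical content of the proof.
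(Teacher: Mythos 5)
Your proposal reaches the right conclusion, but it takes a genuinely different route from the paper: where you construct the adjoint process by hand and rerun the convex-perturbation and verification arguments on $[0,\infty)$, the paper simply invokes the infinite-horizon stochastic maximum principle as a black box, citing Section 3.1 of Bayraktar--Zhang and Maslow--Veverka for both directions (necessity of the adjoint characterization for an optimizer of $J_m$ with $m$ frozen, and sufficiency under the joint convexity of $(x,a)\mapsto L(x,a)+F(x,m)$), together with a remark that those references' computations go through when $F$ is random through $\sL(X_t\mid\sF_t^0)$. Your version is essentially a self-contained reproof of that cited result; its advantage is that it makes visible exactly where the discount rate $r>0$ enters (the $-rY_t$ term as the present-value correction, and the transversality terms $\E[e^{-rT}Y_T\cdot V_T]$, $\E[e^{-rT}Y_T\cdot\Delta_T]$), which the paper's citation hides. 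The cost is that you take on the technical burden the paper outsources, and you correctly identify the transversality estimates as the crux without fully discharging them.

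One concrete point to repair if you carry this out: the claim that $\E\big[\int_0^\infty e^{-rt}(|D_xL(X_t,\alpha_t)|^2+|D_xF(X_t,m_t)|^2)\,dt\big]<\infty$ does not follow from admissibility alone. Since $|D_xL|$ and $|D_xF|$ have linear growth, this requires $\E\big[\int_0^\infty e^{-rt}|X_t|^2\,dt\big]<\infty$; but from $X_t=\xi+\int_0^t\alpha_s\,ds+\Sigma W_t+\Sigma^0W^0_t$ and $\E\int_0^\infty e^{-rs}|\alpha_s|^2\,ds<\infty$ one only gets $\sup_t\E[e^{-rt}|X_t|^2]<\infty$ (the state may grow like $e^{rt/2}$ in $L^2$), which is not integrable against $dt$. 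This is consistent with the paper's own solution concept, which imposes $\sup_t e^{-rt}(|X_t|^2+|Y_t|^2)$ integrable rather than a time-integral bound on $X,Y$. To define $Y_t$ you should instead use the first-moment bound $\E\int_0^\infty e^{-rs}|X_s|\,ds\le C\int_0^\infty e^{-rs/2}\,ds<\infty$, or establish sharper a priori estimates on the optimal state, and then rerun your $o(e^{-rT})$ transversality estimate accordingly. With that adjustment your argument is sound and matches what the cited references actually prove.
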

\begin{remark}
  A solution to \eqref{mpmfg.inf} is understood as a tuple $(X, Y, Z,Z^0)$ such that $(X,Y)$ are $n$-dimensional continuous and $\F$-adapted processes, $(Z,Z^0)$ are $n\times d$--dimensional progressive processes such that
  \begin{equation*}
    \E\bigg[\sup_{t\in[0,\infty)}\e^{-rt}\big(|X_t|^2+|Y_t|^2\big) + \int_0^\infty\e^{-rt}\big(|Z_t|^2 + |Z^0_t|^2\big)dt\bigg]<\infty.
  \end{equation*}
\end{remark}

\begin{proof}[Proof of Lemma \ref{lem.existence.inf}]
  First recall that the unique minimizer of the function $a\mapsto L(x,a) + a\cdot y$ is given by $a = D_yH(x,y)$.
  Thus, if a MFE $(m,\alpha)$ exists, then it follows from the maximum principle for infinite horizon control problems, see \cite[Section 3.1]{Bayraktar-zhang23} that \eqref{eq:mfe.rep.inf} is satisfied.

Reciprocally, by the necessary part of the maximum principle,   (see e.g. \cite[Proposition 3.1]{Bayraktar-zhang23} or \cite[Theorem 2]{Maslow-Veverka14}) it follows that if the FBSDE \eqref{mpmfg.inf} has a solution $(X, Y, Z, Z^0)$ then $\alpha_t = D_yH(X_t, Y_t)$ minimizes the cost $J_m$ with $m_t := \sL(X_t|\sF_t^0)$ and thus, $(m,\alpha)$ is a MFE.
\end{proof}
\begin{remark}
  Both \cite{Bayraktar-zhang23} and \cite{Maslow-Veverka14} consider stochastic control problems with deterministic coefficient whereas in the present common noise case, the function $F$ is random due to $\sL(X_t|\sF_t^0)$.
  An inspection of the arguments in \cite[Section 3.1]{Bayraktar-zhang23} show that the same computations allow to derive the maximum principle when $F$ is random.
\end{remark}

\begin{remark}
  We do not address existence of the mean field game with infinite horizon here as it follows by essentially the same arguments as those of \cite[Theorem 2.1]{Bayraktar-zhang23}.
\end{remark}

\begin{theorem}
  \label{thm:conver.inf}
  Suppose that Assumption \ref{assump:driftcontrolstruct.inf.T} holds.
  Suppose further that for each $N\in \N$ there is a Nash equilibrium $\bm\alpha^N = (\alpha^{N,1},\dots, \alpha^{N,N})$ for the $N$ player game with infinite horizon and a MFE $(m,\alpha)$ for the MFG with infinite horizon.
  Let $\bm X^N = (X^{N,1},\dots, X^{N,N})$ be the state process corresponding to $\bm \alpha^N$ and $X$ be the state process corresponding to $\alpha$.
  Then, there is a constant $C$ independent of $N$ such that for each $N\in \N$ and $i\in \{1,\dots,N\}$,
  \begin{align*}
    \E\Big[\sup_{ t\in [0,\infty)}\e^{-rt} |X_t^{N,i} - X_t^i|^2\Big] \leq \frac{C}{N} \quad \mathrm{and}\quad \E\bigg[\int_0^\infty\e^{-rt} |\alpha_t^{N,i} - \alpha_t^{i}|^2 dt \bigg] \leq \frac CN
  \end{align*}
  where $(X^i, Y^i, Z^i, Z^{0,i})_{i \in \N}$ are the conditionally independent copies of $(X, Y, Z, Z^0)$ and $(\alpha^i)_{i=1,\dots, N}$ are conditionally independent copies of $\alpha$. 
\end{theorem}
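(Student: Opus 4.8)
The plan is to mirror the proof of Theorem \ref{thm:pocdriftcontrol} (via Proposition \ref{prop:mainestdrift}), with the discount factor $r$ playing the role that the finite time horizon played there. First I would invoke Lemma \ref{lem.existence.inf} to characterize the MFE $(m,\alpha)$ through the McKean--Vlasov FBSDE \eqref{mpmfg.inf} with $\alpha_t = D_yH(X_t,Y_t)$, and the corresponding discounted maximum principle to characterize each $N$-player Nash equilibrium by the analogous $N$-dimensional FBSDE carrying an empirical-measure coupling, the extra $\tfrac1N D_mF$ term, and the discount drift $-rY$. I would then introduce the conditionally independent copies $(X^i,Y^i,Z^i,Z^{0,i},\alpha^i)$ of the MFE solution as in Definition \ref{def:condindcopies}, each solving \eqref{mpmfg.inf} with $W^i$ in place of $W$, and set $\Delta X^i = X^{N,i}-X^i$, $\Delta Y^i = Y^{N,i}-Y^i$, $\Delta\alpha^i = \alpha^{N,i}-\alpha^i$.

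The central computation is the evolution of the discounted product $\e^{-rt}\Delta X_t^i\cdot\Delta Y_t^i$. Since $\Delta X^i$ has finite variation ($d\Delta X_t^i = \Delta\alpha_t^i\,dt$) and the backward drift of $\Delta Y^i$ contains $+r\Delta Y_t^i$, the two occurrences of $r$ cancel, leaving $d(\e^{-rt}\Delta X_t^i\cdot\Delta Y_t^i) = \e^{-rt}\big[\Delta Y_t^i\cdot\Delta\alpha_t^i - \Delta X_t^i\cdot\Delta D_t^i\big]\,dt + dM_t^i$, where $\Delta D_t^i$ collects the differences of $D_xL$, $D_xF$, and $\tfrac1N D_mF$. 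Using the first-order conditions $Y^{N,i}_t = -D_aL(X^{N,i}_t,\alpha^{N,i}_t)$ and $Y^i_t = -D_aL(X^i_t,\alpha^i_t)$ exactly as in Proposition \ref{prop:mainestdrift}, I would integrate from $0$ to $T'$, sum over $i$, take expectations (the term at $t=0$ vanishes since $\Delta X^i_0 = 0$), and split off the weak-error terms $E^{F,i}$ as in \eqref{eq:def.EF}. Joint convexity of $L$ (Assumption \ref{assump:driftcontrolstruct.inf.T}(ii)) produces a term $\geq C_L\sum_i|\Delta\alpha^i|^2$, and displacement semi-monotonicity of $F$, via the finite-dimensional projection of Lemma \ref{lem:dispmonotoneprojected}, produces a term $\geq C_F\sum_i|\Delta X^i|^2$.

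The genuinely new point, and the main obstacle, is the transversality (boundary-at-infinity) term $\lim_{T'\to\infty}\E\big[\e^{-rT'}\sum_i\Delta X^i_{T'}\cdot\Delta Y^i_{T'}\big]$. I would show it equals zero as follows: the identity holds for every $T'$, and its right-hand side converges as $T'\to\infty$ once the integrand is shown to be integrable (from the $\e^{-rt}$-weighted integrability of the FBSDE solutions together with admissibility), so the limit of the left-hand side exists; and since $\int_0^\infty\e^{-rt}\E[|\Delta X^i_t|^2]\,dt$ and $\int_0^\infty\e^{-rt}\E[|\Delta Y^i_t|^2]\,dt$ are finite, there is a sequence $t_n\to\infty$ along which $\E[\e^{-rt_n}|\Delta X^i_{t_n}|^2]$ and $\E[\e^{-rt_n}|\Delta Y^i_{t_n}|^2]$ both tend to $0$, forcing the already-existing limit to vanish by Cauchy--Schwarz. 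With the boundary term eliminated, the remaining step is the discounted analog of the time-horizon trade-off: the elementary weighted Hardy inequality $\int_0^\infty\e^{-rt}|\Delta X^i_t|^2\,dt\leq \frac{4}{r^2}\int_0^\infty\e^{-rt}|\Delta\alpha^i_t|^2\,dt$ (obtained by optimizing a weighted Cauchy--Schwarz bound over an intermediate rate $\beta=r/2$) absorbs the possibly negative term $C_F\sum_i|\Delta X^i|^2$, and the resulting coefficient is exactly $C_L - 4C_F^-/r^2$, which is positive precisely under condition \eqref{convexmonotone.Tinf}. After a Young's inequality this yields $\sum_i\E\int_0^\infty\e^{-rt}|\Delta\alpha^i_t|^2\,dt \leq C\sum_i\E\int_0^\infty\e^{-rt}|E^{F,i}_t|^2\,dt$.

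It then remains to estimate the error terms and pass to the per-player rate. The bound $\int_0^\infty\e^{-rt}\E[|E^{F,i}_t|^2]\,dt\leq C/N$ follows from Lemma \ref{lem:errorterms} and the conditional independence of the copies, exactly as in the proof of Theorem \ref{thm:pocdriftcontrol}, provided $\int_0^\infty\e^{-rt}\E[|X^i_t|^2]\,dt<\infty$; this last fact follows from admissibility via the same weighted Hardy inequality applied to $X^i_t = \xi^i + \int_0^t\alpha^i_s\,ds + \Sigma W^i_t + \Sigma^0 W^0_t$. Summing then gives $\sum_i\E\int_0^\infty\e^{-rt}|\Delta\alpha^i_t|^2\,dt\leq C$, and the state estimate follows directly from the pathwise bound $\sup_t\e^{-rt}|\Delta X^i_t|^2\leq \tfrac1r\int_0^\infty\e^{-rs}|\Delta\alpha^i_s|^2\,ds$. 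Finally, to convert the summed estimates into the stated per-index rate $C/N$, I would invoke the infinite-horizon analogs of Lemmas \ref{lem:uniqueness} and \ref{lem:exchangeable}: the uniqueness statement is the discounted version of Case 2 of Lemma \ref{lem:uniqueness} (run with the $\e^{-rt}$-weighted Hardy inequality in place of the $T$-dependent bounds), and once uniqueness holds the symmetry argument of Lemma \ref{lem:exchangeable} carries over unchanged.
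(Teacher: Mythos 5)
Your proposal is correct and follows essentially the same route as the paper's proof: the same discounted Lyapunov functional $\e^{-rt}\Delta X_t^i\cdot\Delta Y_t^i$ with the $r$-terms cancelling, the same subsequence argument to kill the transversality term at infinity, the same weighted inequality $\int_0^\infty\e^{-rt}|\Delta X_t^i|^2\,dt\le \frac{4}{r^2}\int_0^\infty\e^{-rt}|\Delta\alpha_t^i|^2\,dt$ yielding the coefficient $C_L-4C_F^-/r^2$, the same error-term estimate via Lemma \ref{lem:errorterms}, and the same appeal to infinite-horizon uniqueness and exchangeability for the per-player rate. (Your explicit pathwise bound $\sup_t\e^{-rt}|\Delta X_t^i|^2\le\frac1r\int_0^\infty\e^{-rs}|\Delta\alpha_s^i|^2\,ds$ for the sup-estimate is a small but welcome added detail the paper leaves implicit.)
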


\begin{proof}
  \textit{Step 1 - Applying the maximum principle:} 
  By the necessary maximum principle for infinite horizon control problems derived in \cite[Section 3.1]{Bayraktar-zhang23} (and applied here with law--independent coefficients), it follows that, since $\alpha^{N,i}$ minimizes $\alpha\mapsto J^{N,i}(\bm \alpha^{N,-i},\alpha)$, we have
  \begin{equation*}
    \alpha^{N,i}_t = \argmin_{a \in \R^n}\big( L(X^{N,i}_t, a) + a\cdot Y^{N,i}_t \big) 
  \end{equation*}
  for some $(X^{N,i}, Y^{N,i}, Z^{N,i,j}, Z^{N,i,0})_{i=1,\dots,N}$ satisfying
  \begin{align} \label{nplayermp.inf}
\begin{cases} \vspace{.1cm}
dX_t^{N,i} = \alpha_t^{N,i} dt + \Sigma dW_t^i + \Sigma_0 dW_t^0,\quad X_0^i = \xi^i, \\ \vspace{.1cm}
dY_t^{N,i} = - \Big(D_x L(X_t^{N,i}, \alpha_t^{N,i}) + D_x F(X_t^{N,i}, m_{\bX_t^N}^{N}) + \frac{1}{N} D_m F(X_t^{N,i}, m_{\bX_t^N}^N, X_t^{N,i}) - rY^{N,i}_t \Big) dt  \\ \qquad \qquad \qquad  + \sum_{j = 1}^N Z_t^{N,i,j}  dW_t^j + Z_t^{N,i,0}  dW_t^0.
\end{cases} 
\end{align}
  On the other hand, from Lemma \ref{lem.existence.inf}, we have 
  \begin{equation*}
    \alpha^{i}_t = \argmin_{a \in \R^n}\big( L(X^{i}_t, a) + a\cdot Y^{i}_t \big) 
  \end{equation*}
  and the tuple $(X^i,Y^i,Z^i)$ satisfies 
  \begin{align}
\begin{cases} \vspace{.1cm}
  dX_t^i = \alpha_t^i dt + \Sigma dW_t + \Sigma_0 dW_t^0, \quad  X_0 = \xi, \\ \vspace{.1cm}
  dY_t^i = - \Big(D_x L(X_t^i, \alpha_t^i) + D_x F_t(X_t^i, \sL(X_t^i | \sF_t^0)) - rY^i_t \Big) dt + Z_t^i  dW_t + Z_t^{i,0}  dW_t^0. 
\end{cases}
\end{align}

 \textit{Step 2 - Computing $d \big( e^{-rt} \Delta X_t^i \cdot \Delta Y_t^i \big)$:}
  Let us put 
  $$\Delta X^i_t:= X^{N,i}_t - X^i_t,\quad \Delta Y^i_t:=Y^{N,i}_t - Y^i_t\quad \mathrm{and} \quad \Delta \alpha^{i}_t:= \alpha^{N,i}_t - \alpha^i_t.
  $$
  Using It\^o's formula, we compute the dynamics of $\e^{-rt}\Delta X\cdot\Delta Y$ as
\begin{align*}  
  d \e^{-rt}\Delta X_t^i \cdot \Delta Y_t^i &= \e^{-rt}\Delta Y_t^i \cdot \Delta \alpha_t^i dt - \e^{-rt}\Delta X_t^i \cdot \bigg(D_x L(X_t^{N,i}, \alpha_t^{N,i}) + D_x F(X_t^{N,i}, m_{\bX_t^N}^N) \nonumber \\
  &\quad + \frac{1}{N} D_m F(X_t^{N,i}, m_{\bX_t^N}^N, X_t^{N,i}) - D_x L(X_t^i, \alpha_t^i) - D_x F(X_t^i, \cL(X_{t}|\sF^0_t)) - r\Delta Y^i_t \bigg)\\ 
  &\quad - r\e^{-rt}\Delta X^i_t\cdot\Delta Y^i_t dt + dM_t^i, 
\end{align*}
where $M^i$ is a martingale. 
Since $\alpha_t^{N,i}$ minimizes $a \mapsto  L(X_t^{N,i}, a) + a \cdot Y_t^{N,i}$, we get 
\begin{align*}
Y_t^{N,i} = - D_a L(X_t^{N,i}, \alpha_t^{N,i}), \text{ and similarly }
{Y}_t^i = - D_a L({X}_t^{i}, {\alpha}_t^{i}).  
\end{align*}
We can use these identities to rewrite the above expression of $ d \e^{-rt}\Delta X_t^i \cdot \Delta Y_t^i$ as
\begin{align*} 
  d \e^{-rt}\Delta X_t^i \cdot \Delta Y_t^i = - &\e^{-rt}\bigg(\Delta \alpha^i_t \cdot  \big(D_a L(X_t^{N,i}, \alpha_t^{N,i}) - D_a L(X_t^i, \alpha_t^i) \big) + \Delta X_t^i \cdot \big(D_x L(X_t^{N,i}, \alpha_t^{N,i})\\
   &- D_x L(X_t^i, \alpha_t^i) \big) 
  + \Delta X_t^i \cdot \big(D_x F(X_t^{N,i}, m_{\bX_t^N}^N) - D_x F(X_t^i, m_{\bX_t^N}^N)\big) + \Delta X_t^i \cdot E_t^{F,i}  \\
  & + \frac{1}{N} \Delta X_t^i  \big( D_m F(X_t^{N,i}, m_{\bX_t^N}^N, X_t^{N,i}) - D_m F(X_t^{i}, \ov{m}_t^N, X_t^i)\big) \bigg) dt + dM_t^i
\end{align*}
where 
\begin{align*}
  E_t^{F,i} = \frac{1}{N} D_m F(X_t^{i}, \ov{m}_t^N, X_t^{i}) + D_x F(X_t^i, \ov{m}_t^N) -  D_x F(X_t^i, \ov{m}_t), 
\end{align*}
with $\ov{m}_t^N = \frac{1}{N} \sum_{i = 1}^N \delta_{X^i_t}$ and $\ov{m}_t = \sL(X_t^i | \sF_t^0)$. 
\newline \newline 
\textit{Step 3 - Using the monotonicity conditions: }
  Summing up and integrating over $[0,T]$ for any $T>0$, we obtain
  \begin{align*}
  &\sum_i\E[\e^{-rT}\Delta X^i_T\cdot\Delta Y^i_T]\\
  & =-\E\bigg[ \int_0^T \sum_i \e^{-rt}\bigg(\Delta \alpha_t^i \cdot \big(D_a L(X_t^{N,i}, \alpha_t^{N,i}) - D_a L(X_t^i, \alpha_t^i) \big) + \Delta X_t^i \cdot \big(D_x L(X_t^{N,i}, \alpha_t^{N,i}) - D_x L(X_t^i, \alpha_t^i) \big) \nonumber \\
  & \quad \quad + \Delta X_t^i \cdot \big(D_x F(X_t^{N,i}, m_{\bX_t^N}^N) - D_x F(X_t^i, m_{\bX^N_t}^N)\big) + \Delta X_t^{i} \cdot E_t^{F,i} \bigg) dt \bigg] \\
  & \quad \quad + \sum_i \frac{1}{N} \Delta X_t^i \cdot \big( D_m F(X_t^{N,i}, m_{\bX_t^N}^N, X_t^{N,i}) - D_m F(X_t^{i}, \ov{m}_t^N, X_t^i)\big) \bigg) dt \bigg] \\
 & \le -\E\bigg[\int_0^T\sum_i\e^{-rt}\Big(C_L|\Delta \alpha^i_t|^2 + C_F|\Delta X^i_t|^2\Big)dt\bigg] + \E\bigg[\int_0^T\sum_i\e^{-rt}|\Delta X^{i}_t||E^{F,i}_t|dt 
 + \frac{C}{N} \int_0^T  \sum_i e^{-rt} |\Delta X_t^i|^2 dt \bigg].
\end{align*}
  By square integrability over $[0,\infty)$ of $\Delta X^i$ and $\Delta Y^i$, (which follows from admissibility of $\alpha^{N,i}$ and $\alpha^i$) we can find a sequence $T_k\uparrow \infty$ such that $\E[\e^{-rT_k}\Delta X^i_{T_k}\cdot\Delta Y^i_{T_k}] \to 0$ for $k\to \infty$.
  Thus, it follows by monotone convergence that
\begin{align}
\nonumber
  C_L \E \bigg[\int_0^\infty \sum_i \e^{-rt}|\Delta \alpha_t^i|^2 dt \bigg]  \label{eq:estim.C_F} 
  &\leq - C_F \E \bigg[\int_0^\infty \sum_i \e^{-rt}|\Delta X_t^i|^2 dt \bigg] + \E \bigg[ \int_0^\infty\e^{-rt}  \sum_i |E_t^{F,i}||\Delta X_t^i|  dt \bigg] \nonumber  \\
  &\qquad + \frac{C}{N}\E\bigg[  \int_0^T  \sum_i e^{-rt} |\Delta X_t^i|^2 dt \bigg]
  \nonumber \\
  &\leq \E\bigg[ \int_0^\infty\e^{-rt} \Big\{ \Big(-C_F+\frac{\delta}{2}\Big) \sum_i |\Delta X_t^i|^2 + \frac{1}{2\delta}\sum_i |E_t^{F,i}|^2  \Big\} dt \bigg] \nonumber \\
  &\qquad + \frac{C}{N} \E\bigg[  \int_0^\infty  \sum_i e^{-rt} |\Delta X_t^i|^2 dt \bigg]
\end{align}
for every $\delta>0$.
Next, using It\^o's formula allows to write
\begin{align*}
    \e^{-rt}|\Delta X^{i}_t|^2 &=  \int_0^t\e^{-rs}\Big( -r|\Delta X^{i}_s|^2 + 2\Delta X^{i}_s\cdot\Delta \alpha^{i}_s  \Big)ds \\
    &\le \int_0^t\e^{-rs}\Big( (\varepsilon-r)|\Delta X^{i}_s|^2 + \frac{1}{\varepsilon}|\Delta \alpha^{i}_s|^2  \Big)ds \nonumber
\end{align*}
for every $t\ge0$ and every $\varepsilon>0$.
Choosing $\epsilon = \frac{r}{2}$ and then sending $t$ to infinity, we find that 
\begin{align} \label{alphaxboundinf}
    \int_0^{\infty} e^{-rt} |\Delta X_t^i|^2 dt \leq \frac{4}{r^2} \int_0^{\infty} e^{-rt} |\Delta X_t^i|^2 dt.
\end{align}
Plugging this into \eqref{eq:estim.C_F}, we get 
\begin{align*}
C_L \E \bigg[\int_0^{\infty} \sum_i e^{-rt} |\Delta \alpha_t^i|^2 dt \bigg] 
 &\leq  \frac{4}{r^2} \big(- (C_F \wedge 0) + \frac{\delta}{2} + \frac{C}{N} \big) \E\bigg[\int_0^{\infty}\sum_i  e^{-rt}  |\Delta \alpha_t^i|^2 dt \bigg]\\
 &\quad + \frac{1}{2 \delta} \E\bigg[\int_0^{\infty} \sum_i |E_t^{F,i}|^2 dt \bigg].
\end{align*}
We now see clearly that if $C_L - \frac{4}{r^2} C_F^- > 0$, then by choosing $\delta$ small enough (independently of $N$), we have 
\begin{align}
     \E\bigg[\int_0^{\infty} \sum_i e^{-rt} |\Delta \alpha_t^i|^2 dt \bigg] \leq C \E\bigg[\int_0^{\infty} \sum_i |E_t^{F,i}|^2 dt \bigg]
\end{align}
We next estimate 
\begin{align*}
  \E\big[ |E_t^{F,i}|^2 \big] &\leq \frac{2}{N^2} \E\big[|D_m F(X_t^i, \ov{m}_t^N, X_t^i)|^2\big] +2 \E[|D_x F(X_t^i, \ov{m}_t^N) - D_x F(X_t^i, \ov{m}_t)|^2] \\
  &\leq \frac{C}{N^2} \big(1 + \E[M_2(\ov{m}_t)]  \big)  + \frac{C}{N}\big(1 + \E[M_2(\ov{m}_t)]  \big), 
\end{align*}
with the second inequality using Lipschitz continuity of $D_m F$ and Lemma \ref{lem:errorterms} as in the proof of Theorem \ref{thm:pocdriftcontrol}. Thus we have 
\begin{align*}
  \E\bigg[\int_0^{\infty} \sum_i e^{-rt} |\Delta \alpha_t^i|^2 dt \bigg] \leq C \E\bigg[\int_0^{\infty}e^{-rt} \big( 1 + M_2(\ov{m}_t) \big) dt \bigg] \leq C. 
\end{align*}
This completes the claimed bound on $\Delta \alpha^i$, provided that we know $\E\Big[\int_0^{\infty}e^{-rt} |\Delta \alpha_t^i|^2 dt \Big] = \E\Big[\int_0^{\infty}e^{-rt} |\Delta \alpha_t^j|^2 dt  \Big]$ for $i \neq j$. This will follow from the fact that $N$-player Nash equilibria are unique as in the proof of Lemma \ref{lem:exchangeable}. Uniqueness in turn can be established as in the proof of Lemma \ref{lem:uniqueness}. Since the proofs are very similar to the finite-horizon case, we omit the details. Finally, the claimed bound on $\Delta X^i$ follows from the bound on $\Delta \alpha^i$ thanks to \eqref{alphaxboundinf}. 
  \end{proof}

\appendix

\section{Existence of mean field equilibria}
\label{sec:existence}
In this final section we prove a well-posedness result for a general system of McKean-Vlasov FBSDEs with displacement monotone generator.
This result played a key role in our proof of the existence and uniqueness of the mean field game with common noise and controlled volatility.
The proof will follow the continuity method laid down by \cite{pengwumonotone}.
Let us also refer to \cite{ahuja2016,cardelbook1,Zhang} for other generalizations of the idea.

Recall that to complete the main result on existence and uniqueness of mean field games given in Theorem \eqref{thm:existence}, we need to prove well-posedness of the conditional McKean-Vlasov equation \eqref{eq:mkvfbsde}.

To simplify the exposition, we will consider the generic McKean-Vlasov FBSDE given by

\begin{equation}
\label{eq:fbsde.gen}
  \begin{cases}\vspace{.1cm}
    dX_t =  B_t(X_t, Y_t, Z_t, \mu_t)dt + \Sigma_t(X_t, Y_t, Z_t, \mu_t)d\ov W_t\\\vspace{.1cm}
    dY_t = F_t(X_t, Y_t, Z_t, \mu_t)dt  - Z_td \ov W_t\\\vspace{.1cm}
    X_0 = \xi,\quad Y_T= g(X_T, m_T),\quad \mu_t =\cL(X_t|\cF^0_t)
  \end{cases}
\end{equation}
where $\ov W$ is the $d + d$-dimensional Brownian motion $(W, W^0)$.
We consider the following assumptions:
\begin{assumption}
\label{ass.fbsde.gen}
 The functions $B,F: [0,T] \times \R^n\times \R^{n}\times \R^{2(n\times d)} \times \cP_2(\R^n)\to \R^n $, $\Sigma:[0,T] \times \R^n\times \R^{n}\times \R^{2(n\times d)} \times \cP_2(\R^n)\to \R^{n\times 2d} $, and $g: \R^n\times \cP_2(\R^n)\to \R^n$ satisfy
 \begin{itemize}
  \item[(i)] the Lipschitz--continuity conditions: there is a constant $\ell_f$ such that
  \begin{equation*}
    |f_t(x,y, z,\mu) - f_t(x', y',z', \mu')|\le \ell_f\big(|x - x'| + |y - y'| + |z - z'| + \cW_2(\mu, \mu') \big) 
  \end{equation*}
  for every $(x, x', y,y', z,z', \mu, \mu')\in (\R^n)^2\times (\R^n)^2\times (\R^{n\times (d + d)})^d \times (\cP_2(\R^n))^2$ where $f$ is any function $f \in \{B, F, \sigma, g\}$ with appropriate domain and image space.
  \item[(ii)] the monotonicity condition: there is a constant $C_f>0$ such that
  \begin{align*}
    \E\Big[&\Delta Y \cdot\big(B_t(X^1, Y^1, Z^1, \cL(X^1)) - B_t(X^2, Y^2, Z^2, \cL(X^2))\big) \\
    &+ \Delta Z\cdot\big(\Sigma_t(\xi^1, Y^1, Z^1, \cL(X^1)) - \Sigma_t(X^2, Y^2, Z^2, \cL(X^2)) \big)\\
     &- \Delta X\cdot \big(F_t(X^1, Y^1, Z^1, \cL(X^1)) - F_t(X^2, Y^2, Z^2, \cL(X^2))\Big] \le - C_f\E\big[|X^1 - X^2 |^2\big]
  \end{align*}
  and
  \begin{equation*}
    \E\Big[ \Delta X\cdot\big( g(X^1, \cL(X^1)) - g(X^2, \cL(X^2))\big)\Big] \ge C_f\E[|X^1 - X^2|^2]
  \end{equation*}
  for any square-integrable random variables $X^1,X^2,Y^1,Y^2,Z^1,Z^2$ of appropriate dimensions.
\end{itemize}
\end{assumption}
Given a Euclidean space $E$, and $p\ge1$ further consider the space $\H^p(E)$ of $E$-valued progressive processes $X$ such that
\begin{equation*}
  \|X\|_{\H^p(E)}^p := \E\bigg[\int_0^T|X_t|^pdt\bigg]<\infty.
\end{equation*}
\begin{theorem}
\label{thm.gen-existence}
  Assume that $\xi\in \L^2$ and that Assumption \ref{ass.fbsde.gen} is satisfied. 
  Then Equation \ref{eq:fbsde.gen} admits a unique solution $(X, Y, Z) \in \H^2(\R^n)\times \H^2(\R^n)\times \H^2(\R^{2(n\times d)})$.
\end{theorem}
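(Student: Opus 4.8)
The plan is to prove Theorem \ref{thm.gen-existence} by the method of continuation in the coefficient, following \cite{pengwumonotone} but adapted to the conditional McKean--Vlasov setting. For $\lambda \in [0,1]$ and given inputs $I^B, I^F \in \H^2(\R^n)$, $I^\Sigma \in \H^2(\R^{n \times 2d})$ and $\eta \in \L^2(\cF_T)$, I introduce the interpolated family of FBSDEs
\begin{equation*}
\begin{cases}
dX_t = \big(\lambda B_t(\Theta_t, \mu_t) + I^B_t\big)\, dt + \big(\lambda \Sigma_t(\Theta_t, \mu_t) + I^\Sigma_t\big)\, d\ov W_t, \\
dY_t = \big(\lambda F_t(\Theta_t, \mu_t) + (1-\lambda) X_t + I^F_t\big)\, dt - Z_t\, d\ov W_t, \\
X_0 = \xi, \quad Y_T = \lambda g(X_T, \mu_T) + (1-\lambda) X_T + \eta, \quad \mu_t = \cL(X_t | \cF^0_t),
\end{cases}
\end{equation*}
where $\Theta_t = (X_t, Y_t, Z_t)$. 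At $\lambda = 1$ with zero inputs this is exactly \eqref{eq:fbsde.gen}, while at $\lambda = 0$ the system decouples. Since the interpolated generator is $\lambda F_t(\theta,\mu) + (1-\lambda)x$ and the interpolated terminal coefficient is $\lambda g(x,\mu) + (1-\lambda)x$, a direct computation shows that the $\lambda$-coefficients inherit the monotonicity of Assumption \ref{ass.fbsde.gen}(ii) with a constant $C_\lambda = \lambda C_f + (1-\lambda) \geq \min(C_f, 1) > 0$ that is bounded below uniformly in $\lambda$, and their Lipschitz constants are bounded by those of the original data. Let $\cS \subseteq [0,1]$ be the set of $\lambda$ for which the above system is uniquely solvable in $\H^2(\R^n) \times \H^2(\R^n) \times \H^2(\R^{2(n\times d)})$ for \emph{every} choice of inputs.

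First I would verify $0 \in \cS$: at $\lambda = 0$ the forward equation is a plain SDE with given coefficients $I^B, I^\Sigma$, hence uniquely solvable, and then $(Y,Z)$ solves the linear BSDE with driver $X_t + I^F_t$ and terminal datum $X_T + \eta$, which is uniquely solvable by standard theory. The core is then the continuation step: there exists $\delta_0 > 0$, depending only on $T$, the Lipschitz constants and $\min(C_f,1)$ but \emph{not} on $\lambda_0$, such that $\lambda_0 \in \cS$ implies $[\lambda_0, \lambda_0 + \delta_0] \cap [0,1] \subseteq \cS$. To prove this I fix $\lambda = \lambda_0 + \delta$ and, given $(X,Y,Z)$, define a map $\Phi$ by absorbing the $\delta$-perturbation terms into the inputs and solving the (solvable) $\lambda_0$-system; a solution at $\lambda$ is a fixed point of $\Phi$. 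Writing $(\Delta X, \Delta Y, \Delta Z)$ for the difference of the images of two inputs, applying It\^o's formula to $\Delta X_t \cdot \Delta Y_t$ and using the monotonicity condition (conditionally on $\cF^0$, see below) together with the standard $\H^2$ stability estimates for SDEs and BSDEs yields an inequality of the schematic form
\begin{equation*}
\min(C_f,1)\, \E\Big[\int_0^T |\Delta X_t|^2\, dt\Big] \leq C \delta\, \E\Big[\int_0^T \big(|\Delta X_t|^2 + |\Delta Y_t|^2 + |\Delta Z_t|^2\big)\, dt\Big],
\end{equation*}
which, combined with the BSDE estimate controlling $\E\int_0^T(|\Delta Y_t|^2 + |\Delta Z_t|^2)\,dt$ by $\E\int_0^T|\Delta X_t|^2\,dt$, makes $\Phi$ a contraction for $\delta$ small. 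Iterating from $0 \in \cS$ in steps of size $\delta_0$ reaches $1 \in \cS$.

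Two points require care. The first is the conditional law: the monotonicity inequality of Assumption \ref{ass.fbsde.gen}(ii) is postulated for arbitrary square-integrable random variables, so I would apply it conditionally on $\cF^0_t$ — freezing the regular conditional laws $\cL(X^1_t \mid \cF^0_t)$, $\cL(X^2_t \mid \cF^0_t)$ and invoking the inequality for these conditional distributions — and then integrate out $\cF^0_t$, the common-noise martingale increments vanishing under $\E$ exactly as in the proofs of Propositions \ref{prop:mainestdrift}--\ref{prop:mainest}. The second is uniqueness, which follows from the same $\Delta X_t \cdot \Delta Y_t$ computation applied directly to two solutions of \eqref{eq:fbsde.gen}: monotonicity of the generator and of $g$ forces $\E\int_0^T |\Delta X_t|^2\,dt = 0$, and the forward SDE and backward BSDE stability estimates then propagate this to $\Delta Y = \Delta Z = 0$. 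I expect the main obstacle to be making the continuation step genuinely uniform in $\lambda_0$ while controlling the measure-dependent terms: one must check that the $\cW_2$-Lipschitz dependence of the coefficients on $\mu_t$, after conditioning, is absorbed into the $\E\int_0^T|\Delta X_t|^2\,dt$ term rather than destroying the contraction, and that the BSDE stability constant entering the estimate above is itself independent of $\lambda_0$.
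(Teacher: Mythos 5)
Your proposal is correct and follows essentially the same continuation argument as the paper's own proof: the same duality computation on $\Delta X_t \cdot \Delta Y_t$, the same observation that the interpolated coefficients remain monotone with constant $\lambda C_f + (1-\lambda) \geq \min(C_f,1)$ uniformly along the homotopy, and the same combination of the monotonicity estimate with BSDE stability to make the perturbation map a contraction with a step size independent of the starting point. The only (cosmetic) difference is that the paper's interpolation also inserts $-(1-\delta)y$ and $-(1-\delta)z$ into the forward drift and volatility, so its base case is a coupled linear monotone FBSDE solved by citing \cite[Lemma 8.4.3]{Zhang}, whereas your base case decouples entirely; both choices preserve the required monotonicity and Lipschitz bounds, so either works.
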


\begin{proof}
  Throughout the proof we put $\Theta := (X, Y, Z)$ and $m_{X_t}:=\cL(X_t|\cF^0_t)$.
  Given $\delta \in [0,1]$, and $b^0, f^0\in \H^2(\R^n)$, $\Sigma^0\in \H^2(\R^{2(n\times d)})$ and $g^0\in \L^2$, consider the FBSDE 
  \begin{equation}
    \label{eq:fbsde.delta}
    \begin{cases}\vspace{.1cm}
      X_t = \xi + \int_0^t\big[ B^\delta_s(\Theta_s, m_{X_s}) + b^0_s \big] ds + \int_0^t\big[\Sigma^\delta_s(\Theta_s,m_{X_s}) + \Sigma^0_s\big]d\ov W_s\\\vspace{.1cm}
    Y_t = g^\delta(X_T, m_{X_T}) + g^0 + \int_t^T\big[F^\delta_s(\Theta_s,m_{X_s}) + f^0_s\big]ds - \int_s^TZ_sd\ov W_s
    \end{cases}
  \end{equation}
  where we defined the functions
  \begin{align*}
    B^\delta_t(x, y, z,m) &:= \delta B_t(x, y, z,m) - (1 - \delta)y,\quad \Sigma^\delta_t(x, y, z,m) := \delta\Sigma_t(x, y, z,m) -(1 - \delta) z\\
    F^\delta_t(x, y, z, m) &:= \delta F_t(x, y, z, m) + (1 - \delta)x,\quad g^\delta(x, m) := \delta g(x,m) + (1 - \delta)x
  \end{align*}
  for every $(x, y, z, m)\in \R^n\times \R^n \times \R^{2(n\times d)}\times \cP_2(\R^n)$.
  We will use the notation FBSDE$(\delta)$ to denote Equation \eqref{eq:fbsde.delta}.
  Observe that FBSDE$(1)$ is exactly \eqref{eq:fbsde.gen} and FBSDE$(0)$ is a standard linear FBSDE (i.e. not of McKean--Vlasov type).

  \emph{Step 1:} Continuation. 
  In this first step, we will show that for any given $\delta_0\in [0,1]$, if for every $b^0, f^0\in \H^2(\R^n)$, $\sigma^0\in \H^2(\R^{2(n\times d)})$ and $g^0\in \L^2$ the FBSDE$(\delta_0)$ admits a priori a unique solution in $\H^2(\R^n)\times \H^2(\R^n)\times \H^2(\R^{2\times(n\times d)})$, then there is $\eta>0$ depending only on the constants $c_f,\ell_f $ and $T$ such that for every $\delta \in [\delta_0, \delta_0+\eta]$ FBSDE$(\delta)$ also admits a unique solution in $\H^2(\R^n)\times \H^2(\R^n)\times \H^2(\R^{2\times(n\times d)})$.

  To this end, let $\delta \in [\delta_0, \delta_0+\eta]$ for some $\eta>0$ to be determined and put $\varepsilon := \delta - \delta_0$.
  Consider the mapping $\Psi:\H^2(\R^n)\times \H^2(\R^n)\times \H^2(\R^{2\times(n\times d)})\to \H^2(\R^n)\times \H^2(\R^n)\times \H^2(\R^{2\times(n\times d)})$ such that $\Psi(x, y, z) = (X,Y,Z)$ with $(X,Y,Z)$ satisfying the FBSDE
  \begin{equation}
  \begin{cases}\vspace{.2cm}
    X_t = \xi + \int_0^tB^{\delta_0}_s(\Theta_s,m_{X_s}) + \varepsilon\big[y_s + B_s(\vartheta_s,m_{x_s})\big] ds + \int_0^t\Sigma^{\delta_0}_s(\Theta_s,m_{X_s}) + \varepsilon\big[z_s + \Sigma_s(\vartheta_s, m_{x_s}) \big]dW_s\\\vspace{.2cm}
    Y_t = g^{\delta_0}(X_T, m_{X_T}) + \varepsilon\big[-x_T + g(x_T, m_{x_T})\big] + \int_t^TF^{\delta_0}_s(\Theta_s, m_{X_s}) + \varepsilon\big[-x_s + F_s(\vartheta_s, m_{x_t})\big]ds \\
    \qquad - \int_t^TZ_sd\ov W_s
  \end{cases}
  \end{equation}
  where we used the notation
   $$\vartheta = (x, y, z),\quad m_{X_t} = \cL(X_t|\cF_t^0)\quad \text{and}\quad m_{x_t} = \cL(x_t|\cF_t^0).$$
  The assumption on existence of FBSDE$(\delta_0)$ for every $b^0, f^0\in \H^2(\R^n)$, $\Sigma^0\in \H^2(\R^{2(n\times d)})$ and $g^0\in \L^2$ as well as Assumption \ref{ass.fbsde.gen}.$(i), (ii)$ guarantee that $(X, Y, Z)$ is well defined.
  We will show that for an adequate choice of $\eta$, the functional $\Psi$ admits a unique fixed point in $\H^2(\R^n)\times \H^2(\R^n)\times \H^2(\R^{2\times(n\times d)})$.
  Let us use the shorthand notation 
  $$\Delta \zeta:= \zeta^1 - \zeta^2$$ 
  for any vectors $\zeta^1$ and $ \zeta^2$ irrespective of the dimension. 
  Let $(x^1, y^1, z^1), (x^2, y^2, z^2)\in \H^2(\R^n)\times \H^2(\R^n)\times \H^2(\R^{2\times(n\times d)})$ and $(X^i, Y^i, Z^i) = \Psi(x^i, y^i, z^i)$, $i=1,2$.
  Applying It\^o's formula, we have
  \begin{align*}
    &d\Delta X_t\cdot\Delta Y_t\\
     &= \Delta X_t\cdot\Big\{ -\big(F^{\delta_0}_t(\Theta_t^1,m_{X^1_t}) - F^{\delta_0}_t(\Theta^2_t, m_{X^2_t})\big) + \varepsilon\big( \Delta x_t - F_t(\vartheta^1_t, m_{x^1_t}) + F_t(\vartheta_t^2, m_{x^2_t})) \Big\}dt\\
    &\quad + \Delta Y_t\cdot\Big\{ B^{\delta_0}_t(\Theta^1_t, m_{X^1_t}) - B^{\delta_0}_t(\Theta^2_t, m_{X^2_t}) + \varepsilon\big( \Delta y_t + B_t(\vartheta_t^1, m_{x^1_t}) - B_t(\vartheta_t^2, m_{x^2_t}) \big) \Big\}dt\\
    &\quad + \Delta Z_t\cdot\Big\{ \Sigma^{\delta_0}_t(\Theta^1_t, m_{X^1_t}) - \Sigma^{\delta_0}_t(\Theta^2_t, m_{X^2_t}) + \varepsilon\big( \Delta z_t + \Sigma_t(\vartheta_t^1,m_{x^2_t}) - \Sigma_t(\vartheta_t, m_{x^2_t}) \big) \Big\}dt + dM_t
  \end{align*}
  for a martingale $M$.
  Observe that because $B, F, \sigma$ and $g$ satisfy Assumption \ref{ass.fbsde.gen}$(ii)$, it follows that $B^\delta, F^\delta, \sigma^\delta$ and $g^\delta$ also satisfy Assumption \ref{ass.fbsde.gen}$(ii)$ with the constant $C_f$ replaced by
  \begin{equation}
  \label{eq:disp.mon.cons.bound}
    C_{\delta} := \delta c_f + 1 - \delta\ge \min(c_f, 1) =:\bar C_f.
  \end{equation}
  Thus, using that $\Delta X_0 = 0$ we have
  \begin{align*}
    \E\big[\Delta X_T\cdot \Delta Y_T\big] + C_{\delta_0} \E\bigg[\int_0^T|\Delta X_t|^2dt \bigg]&
    \le  \varepsilon \E\bigg[\int_0^T -\Delta X_t \cdot\big(F_t(\vartheta^1_t, m_{x^1_t}) - F_t(\vartheta^2_t, m_{x^2_t}) \big)\\
     &\qquad \qquad + \Delta Y_t\cdot\big( B_t(\vartheta^1_t, m_{x^1_t}) - B_t(\vartheta^2_t, m_{x^2_t}) \big)\\
    &\qquad \qquad  + \Delta Z_t\cdot\big(\sigma_t(\vartheta^1_t, m_{x^1_t}) - \sigma_t(\vartheta^2_t, m_{x^2_t}) \big)\\
    &\qquad \qquad + \Delta X_t\cdot \Delta x_t + \Delta Y_t\cdot \Delta y_t + \Delta Z_t\cdot \Delta z_t dt \bigg].
  \end{align*}
    Thanks to Lipschitz continuity of $B, F, \sigma$ and $g$ and Young's inequality, it follows that
  \begin{align*}
    \E\big[\Delta X_T\cdot \Delta Y_T\big] +(C_{\delta_0}-\varepsilon)\E\bigg[\int_0^T|\Delta X_s|^2ds \bigg] 
    &\le 
     \varepsilon C\E\bigg[\int_0^T|\Delta \Theta_s|^2 + |\Delta \vartheta_s|^2ds\bigg]
  \end{align*}
  for some constant $C>0$, and  where we also used the fact that $\E[\cW^2_2(m_{x^1_s}, m_{x^2_s})]\le \E[|x^1_s - x^2_s|^2]$ and Fubini's theorem.
  In the enusing computation, constant $C$ may change from line to line, it will depend only from $C_f, \ell_f$ and $T$, but never on $\delta_0$.
  On the other hand, using that $g^{\delta_0}$ satisfies Assumption \ref{ass.fbsde.gen}$(ii)$ with constant $C_f$ replaced by $C_{\delta_0}$, we have
  \begin{align*}
    \E[\Delta X_T\cdot \Delta Y_T] \ge C_{\delta_0}\E[|\Delta X_T|^2] + \varepsilon \E\big[-\Delta X_T\cdot\Delta x_T + \Delta X_T\cdot(g(x^1_T, m_{x^1_T}) - g(x^2,m_{x^2_T})) \big].
  \end{align*}
  Combining the last two estimates thus yields
  \begin{align}
  \label{eq:estim.mono.1}
    (C_{\delta_0} - \varepsilon)\E[|\Delta X_T|^2] + (C_{\delta_0} - \varepsilon)\E\bigg[\int_0^T|\Delta X_s|^2ds\bigg]\le \varepsilon C\E\bigg[\int_0^T|\Delta \Theta_s|^2 + |\Delta \vartheta_s|^2ds \bigg] + \varepsilon C\E\big[|\Delta x_T|^2\big].
  \end{align}

  Now, observe that, since $B, F, \sigma$ and $g$ are $\ell_f$-Lipschitz continuous and $\delta_0\le 1$, 
  it follows that the functions $B^{\delta_0}, F^{\delta_0}, \sigma^{\delta_0}$ and $g^{\delta_0}$  are $\ell_f + 2$--Lipschitz continuous.
  Without loss of generality we will nevertheless denote their Lipschitz constant again by $\ell_f$.
  Thus, applying It\^o's formula to $\e^{\kappa t}|\Delta Y_t|^2$ for some $\kappa>0$ to be specified below, we have
  \begin{align*}
    \e^{\kappa t}\E\big[|\Delta Y_t|^2\big] &+ \E\bigg[\int_t^T\e^{\kappa s}|\Delta Z_s|^2ds \bigg] \le C\e^{\kappa T}\E\big[|\Delta X_T|^2\big] + \varepsilon^2C\e^{\kappa T}\E\big[|\Delta x_T|^2\big]\\
    &+\E\bigg[\int_t^T\e^{\kappa s}\Big(\frac{4\ell_f^2 + 1}{a} - \kappa\Big)|\Delta Y_s|^2 + 2a\e^{\kappa s}|\Delta \Theta_s|^2 + 3a\varepsilon\e^{\kappa s}|\Delta \vartheta_s|^2 ds\bigg] 
  \end{align*}
  for any $a>0$.
  Choosing $a<1/2$ and $\kappa\ge \frac{4\ell_f^2 + 1}{a}$, we obtain
  \begin{align*}
    \e^{\kappa t}\E\big[|\Delta Y_t|^2\big] + (1-2a)\E\bigg[\int_t^T\e^{\kappa s}|\Delta Z_s|^2ds\bigg] &\le C\e^{\kappa T}\E\big[|\Delta X_T|^2\big] + \varepsilon^2C\e^{\kappa T}\E\big[|\Delta x_T|^2\big]\\
      &\quad + a\E\bigg[\int_t^T2\e^{\kappa s}(|\Delta X_s|^2 + |\Delta Y_s|^2) + 3\varepsilon\e^{\kappa s}|\Delta \vartheta_s|^2ds\bigg].
  \end{align*}
  Thus, we have the estimates
  \begin{align*}
    \E\bigg[\int_0^T|\Delta Y_t|^2dt\bigg] + \E\bigg[\int_0^T|\Delta Z_t|^2dt \bigg] &\le C\E\big[|\Delta X_T|^2\big] + \varepsilon^2C\E\big[|\Delta x_T|^2\big]\\
    &\quad + a C\E\bigg[\int_0^T|\Delta X_t|^2dt\bigg] + aC\E\bigg[\int_0^T|\Delta\vartheta_t|^2dt\bigg].
  \end{align*}

  Thus, coming back to Equation \ref{eq:estim.mono.1}, the latter estimates allow to obtain
  \begin{align*}
    (C_{\delta_0} - \varepsilon)\E[|\Delta X_T|^2] & + (C_{\delta_0} - \varepsilon)\E\bigg[\int_0^T|\Delta X_s|^2ds\bigg] + \E\bigg[\int_0^T|\Delta Y_s|^2 + |\Delta Z_s|^2ds \bigg]\\
    &\le (\varepsilon + a)C\E\bigg[\int_0^T|\Delta \Theta_t|^2dt \bigg] + \varepsilon C\E\bigg[\int_0^T|\Delta\vartheta_t|^2dt\bigg]\\
    &\quad+ C \E\big[|\Delta X_T|^2\big]+ \varepsilon C\E[|\Delta x_T|^2].
  \end{align*}
  Using again Equation \ref{eq:estim.mono.1} and the fact that $c_{\delta_0}\ge \bar c_f$ (recall \eqref{eq:disp.mon.cons.bound}), we obtain
  \begin{align*}
    (\bar C_{f} - \varepsilon)\E[|\Delta X_T|^2]& + \Big\{(\bar C_{f} - 2\varepsilon) - (\varepsilon + a)C \Big\}\E\bigg[\int_0^T|\Delta X_t|^2dt\bigg]\\
     &\quad+ \Big\{ 1 - ( \varepsilon + a)C \Big\}\E\bigg[\int_0^T|\Delta Y_t|^2 + |\Delta Z_t|^2dt\bigg]\\
    & \le \varepsilon C\E\bigg[\int_0^T|\Delta \vartheta_t|^2dt\bigg] + \varepsilon C\E[|\Delta x_T|^2].
  \end{align*}
  Now, since $\bar C_f>0$ choosing $a$ and $\varepsilon$ small enough allows to get
  \begin{align*}
    \E[|\Delta X_T|^2] + \E\bigg[\int_0^T|\Delta \Theta_s|^2ds\bigg]
     \le \varepsilon C
     \bigg\{ \E\bigg[\int_0^T|\Delta \vartheta_t|^2dt\bigg] + \E[|\Delta x_T|^2]\bigg\}.
  \end{align*}
  Thus, for $\varepsilon$ sufficiently small, $\Psi$ is a contraction mapping.
  Therefore, it suffices to take $\eta$ as the upper bound allowed for $\varepsilon$, which does not depend on $\delta_0$ as $C$ does not depend on $\delta_0$.

  \medskip

  \emph{Step 2: existence for Equation \eqref{eq:fbsde.gen}.} In this step, we show that FBSDE$(1)$ is solvable.
  Notice that FBSDE$(1)$ is precisely Equation \eqref{eq:fbsde.gen}.
  By \cite[Lemma 8.4.3]{Zhang}, FBSDE$(0)$ is solvable.
  If $\eta\ge T$, then the result follows from \emph{Step 1}.
  Let us assume that $\eta<T$ and let $n\in \N^\star$ be such that $(n-1)\eta<T \le n\eta$.
  Then, by step 1 repeated n times, FBSDE$(1)$ is also solvable.
  This concludes the proof.
\end{proof}

\begin{remark}
\label{rem.proof.affine}
Using Lemma \ref{lem:mpmfggen} and Remark \ref{rmk:fbsdeaffinemfe}, in order to prove Theorem \ref{thm:existenceaffine} it suffices to show that the FBSDE 
\begin{align} \label{eq:nplayerfbsdeaffinemfe.appen}
 \begin{cases} \vspace{.1cm}
   dX_t = \Big(B_0 + B_1 \alpha_t + B_2 X_t \Big) dt + \Big( \Sigma_0 + \Sigma_1 \alpha_t + \Sigma_2 X_t\Big) dW_t^i + \Big(\Sigma^0_0 + \Sigma^0_1 \alpha_t + \Sigma^0_2 X_t \Big) dW_t^0   \\ \vspace{.1cm}
dY_t = - \Big( D_x L(X_t,\alpha_t, m_t) + B_2^\top Y_t + \Sigma_2^\top Z_t + (\Sigma_2^0)^\top Z_t^0 \Big) dt
+ Z_t  dW_t + Z_t^0  dW_t^0 \vspace{.1cm}  \\
X_0 = \xi, \quad Y_T = D_x G(X_T,m_T)
\end{cases} 
\end{align}
with
\begin{align*}
    m_t = \sL(X_t | \sF_t^0), \quad \alpha_t = \hat{\alpha}(X_t,Y_t,Z_t,Z_t^0, \sL(X_t | \sF_t^0))
\end{align*}
admits a unique solution.
While this well--posedness of this system is not covered by Theorem \ref{thm.gen-existence}, thanks to the monotonicity assumption on $L$ essentially the same argument allows to derive the desired result in the present case.
The proof is thus omitted.
\end{remark}

\bibliographystyle{abbrvnat} 
\bibliography{dispmonotone}

\end{document}